\documentclass[11pt]{amsart}
\usepackage{amsmath,amssymb,amsthm}
\usepackage[latin1]{inputenc}
\usepackage{tabularx,multicol,array}
\usepackage{graphicx,float,psfrag}
 \usepackage{stmaryrd,mathrsfs}
\usepackage{url}
\usepackage{bbm}
\usepackage{tikz}
\usepackage{caption}
\usepackage{subcaption}
\usepackage{psfrag}

\headheight=8pt
\textheight=624pt
\oddsidemargin=18pt
\topmargin=0pt
\textwidth=15,5cm
\evensidemargin=18pt
\newcommand{\esp}{\vspace{.2cm}}

\newcommand{\reff}[1]{(\ref{#1})}

\theoremstyle{plain}
\newtheorem{theo}{Theorem}[section]
\newtheorem{theo*}{Theorem}
\newtheorem{cor}[theo]{Corollary}
\newtheorem{prop}[theo]{Proposition}
\newtheorem{lem}[theo]{Lemma}

\theoremstyle{remark}
\newtheorem{rem}[theo]{Remark}

\newcommand{\ca}{{\mathcal A}}

\newcommand{\cc}{{\mathcal C}}
\newcommand{\cd}{{\mathcal D}}

\newcommand{\cg}{{\mathcal G}}

\newcommand{\ci}{{\mathcal I}}
\newcommand{\cj}{{\mathcal J}}

\newcommand{\N}{{\mathbb N}}
\renewcommand{\P}{{\mathbb P}}

\newcommand{\R}{{\mathbb R}}

\newcommand{\ind}{{\bf 1}}

\newcommand{\val}[1]{\mathop{\left| #1 \right|}\nolimits}
\newcommand{\inv}[1]{\mathop{\frac{1}{ #1}}\nolimits}
\newcommand{\expp}[1]{\mathop {\mathrm{e}^{ #1}}}

\renewcommand{\phi}{\varphi}
\renewcommand{\epsilon}{\varepsilon}

\DeclareMathOperator\arctanh{arctanh}

\title{Maximum entropy copula with given diagonal section}
\date{\today}

 \author{Cristina Butucea}
 \address{
 Cristina Butucea,
 Université Paris-Est, LAMA (UPE-MLV), F-77455 Marne La Vallée, France.}
 \email{cristina.butucea@univ-mlv.fr}

 \author{Jean-François Delmas}
 \address{
 Jean-Fran\c cois Delmas,
 Université Paris-Est, CERMICS (ENPC), F-77455 Marne La Vallée, France.}
 \email{delmas@cermics.enpc.fr}

 \author{Anne Dutfoy}
 \address{
 Anne Dutfoy, 
 EDF Research \& Development, Industrial Risk Management Department, 92141 Clamart Cedex, France.}
 \email{anne.dutfoy@edf.fr}

 \author{Richard Fischer}
 \address{
 Richard Fischer, 
 Université Paris-Est, CERMICS (ENPC), F-77455 Marne La Vallée, France\\
 EDF Research \& Development, Industrial Risk Management Department, 92141 Clamart Cedex, France.}
 \email{fischerr@cermics.enpc.fr}

\begin{document}

\thanks{This work is partially supported by the French ``Agence Nationale de
 la Recherche'',CIFRE n° 1531/2012, and by EDF Research \& Development, Industrial Risk Management Department}

\keywords{copula, entropy, diagonal section}

\subjclass[2010]{62H05,60E05}

 \begin{abstract} 
 We consider copulas with a given diagonal section and compute the explicit density of the unique optimal 
 copula which maximizes the entropy. In this sense, this copula 
 is the least informative among the copulas with a given diagonal section. We give an explicit criterion 
 on the diagonal section for the existence of the optimal copula and give a closed formula for its
 entropy. We also provide examples for some diagonal sections of usual bivariate copulas and illustrate the
 differences between them and the maximum entropy copula with the same diagonal section.
\end{abstract}

\maketitle

\section{Introduction}

Dependence of random variables can be described by copula distributions. A copula is the cumulative distribution 
function of a random vector $U=\left(U_1, \hdots, U_d\right)$ with $U_i$ uniformly distributed on 
$I=[0,1]$. For an exhaustive overview on copulas, we refer to {\sc Nelsen} \cite{Nelsen1999}. The diagonal section $\delta$
of a $d$-dimesional copula $C$, defined on $I$ as $\delta(t)=C(t,\hdots,t)$ is the cumulative distribution function of 
$\max_{1 \leq i \leq d} U_i$. The function $\delta$ is non-decreasing, $d$-Lipschitz, and verifies $ \delta(t) \leq t$ for all
$t \in I$ with $\delta(0)=0$ and $\delta(1)=1$. It was shown that if a function $\delta$ satisfies these properties, then 
 there exists a copula with $\delta$ as diagonal section (see {\sc Bertino} \cite{bertino} or {\sc Fredricks and Nelsen} 
 \cite{Fredricks1997} for $d=2$ and {\sc Cuculescu and Theodorescu} \cite{cuculescu2001copulas} for $d \geq 2$ ).

 Copulas with a given diagonal section have been studied in different papers, as the diagonal sections  are 
 considered in various fields of application. Beyond the fact that $\delta$ is the cumulative distribution function of the 
 maximum of the marginals, it also characterizes the tail dependence of the copula (see {\sc Joe}
 \cite{Joe1997} p.33. and references  in {\sc Nelsen et al.}  \cite{Nelsen2008473}, {\sc Durante and Jaworski} \cite{durante2008absolutely}, {\sc Jaworski} \cite{Jaworski20092863}) 
 as well as the generator for Archimedean copulas ({\sc Sungur and Yang} \cite{SungurYang1996}). For $d=2$, Bertino in \cite{bertino}
 introduces the so-called Bertino copula $B_{\delta}$ given by $B_{\delta}(u,v)=u \wedge v-\min_{u \wedge v
 \leq t \leq u \vee v}(t-\delta(t))$ for $u,v \in I$. Fredricks and Nelsen in \cite{Fredricks1997} give the example called diagonal 
 copula defined by $K_{\delta}(u,v)=\min(u,v,(\delta(u)+\delta(v))/2)$ for $u,v \in I$. In {\sc Nelsen et al.} \cite{Nelsen2004348,Nelsen2008473}
 lower and upper bounds related to the pointwise partial ordering are given for copulas with a given diagonal section. They
 showed that if $C$ is a symmetric copula with diagonal section $\delta$, then for every $u,v \in I$, we have:
 $$B_{\delta}(u,v) \leq C(u,v) \leq K_\delta(u,v).$$
 
 {\sc Durante et al.} \cite{durantemesiar} provide another construction of copulas for a certain class of diagonal sections, called MT-copulas named after Mayor and 
 Torrens and defined as $D_{\delta}(u,v)=\max(0,\delta(x \vee y) - |x-y|)$. Bivariate copulas with given sub-diagonal sections 
 $\delta_{x_0} : [0,1-x_0] \rightarrow [0,1-x_0], \delta_{x_0}(t)=C(x_0+t,t)$ are 
 constructed from copulas with given diagonal sections in {\sc Quesada-Molina et al.} \cite{QuesadaMolina20084654}. {\sc Durante et al.} \cite{Durante2007}
 or \cite{Nelsen2008473} introduce the technique of diagonal splicing to create new copulas with a given diagonal section based on other such copulas. 
 According to \cite{durante2008absolutely} for $d=2$ and {\sc Jaworski}  \cite{Jaworski20092863} for $d \geq 2$, 
 there exists an absolutely continuous copula with diagonal section
 $\delta$ if and only if the set $\Sigma_{\delta}= \{ t \in I; \delta(t)=t \}$ has zero Lebesgue measure.
 {\sc de Amo et al.} \cite{deAmo2012} is an extension of \cite{durante2008absolutely} for given sub-diagonal sections. 
 Further construction of possibly asymmetric absolutely continuous bidimensional copulas with a given diagonal section 
 is provided in {\sc Erdely and Gonz\'{a}lez} \cite{ErdelyGonzalez2006}. 
 
 Our aim is to find the most uninformative copula with a given diagonal section $\delta$.
 We choose here to maximize the relative entropy to the uniform distribution on $I^d$, among the copulas with given diagonal section.
 This is equivalent to minimizing the Kullback-Leibler divergence with respect to the independent copula. The Kullback-Leibler
 divergence is finite only for absolutely continuous copulas. The previously introduced bivariate 
 copulas $B_\delta$, $K_\delta$ and
 $D_\delta$ are not absolutely continuous, therefore their Kullback-Leibler divergence is infinite.  
 Possible other entropy criteria, such as R\'{e}nyi, Tsallis, etc. are considered for example in {\sc Pougaza and Mohammad-Djafari} \cite{pougaza2010link}.
 We recall that the entropy of a $d$-dimensional absolutely continuous random vector $X=(X_1,\hdots, X_d) $ can be 
 decomposed as the sum of the entropy of the marginals 
 and the entropy of the corresponding copula (see {\sc Zhao and Lin} \cite{Zhao2011628}) :
 $$H(X)=\sum_{i=1}^dH(X_i)+H(U),$$
 where $H(Z)=-\int f_Z(z) \log f_Z(z) dz $ is the entropy of the random variable $Z$ with density $f_Z$,
 and $U=(U_1, \hdots, U_d)$ is a random vector with $U_i$ uniformly distributed on $I$,
 such that $U$ has the same copula as $X$; namely $U$ is distributed as $\left(F_1^{-1}(X_1), \hdots F_d^{-1} (X_d)\right)$
 with $F_i$ the cumulative distribution function of $X_i$.
 Maximizing the entropy of $X$ with given marginals therefore corresponds
 to maximizing the entropy of its copula.
 The maximum relative entropy approach for copulas has an extensive litterature.  Existence results for an optimal solution on
 convex closed subsets of copulas for the total variation distance can be derived from { \sc Csisz\'{a}r } \cite{csiszar1975divergence}. A general discussion
 on abstract entropy maximization is given by {\sc Borwein et al.} \cite{borwein1994entropy}. This theory was applied for copulas 
 and a finite number of expectation constraints in {\sc Bedford and Wilson} \cite{Bedford2013}. Some applications for 
 various moment-based constraints include rank correlation ({\sc Meeuwissen and Bedford} \cite{meeuwissen1997minimally},
 {\sc Chu} \cite{chu2011recovering}, {\sc Piantadosi et al.} \cite{piantadosi}) and marginal moments ({\sc Pasha and Mansoury} \cite{pasha2008determination}). 
 
 We shall apply the theory developed in \cite{borwein1994entropy} to compute the density of the maximum entropy
 copula with a given diagonal section.  We show that there exists a copula with diagonal section $\delta$ and
 finite entropy if and only if $\delta$ satisfies: $ \int_I |\log(t-\delta(t))| dt < + \infty$. Notice
 that this condition is stronger than the condition of $\Sigma_{\delta}$ having zero Lebesgue measure 
 which is required for the existence of an absolutely continuous copula with diagonal section $\delta$. Under this condition,
 and in the case of $\Sigma_\delta=\{0,1\}$, 
 the  optimal copula's density  $c_\delta$ turns out to be of the form, for $x=(x_1, \hdots, x_d) \in I^d$:
 \[
 c_\delta(x)=b(\max(x))\prod_{x_i \neq \max(x)} a(x_i), 
 \]
 with the notation $\max(x)=\max_{1 \leq i \leq d} x_i$, see Theorem \ref{theo:spec}.
 The optimal copula's density in the general case is given in Theorem \ref{theo:gen}. Notice that
 $c_\delta$ is symmetric, that is it is invariant under the permutation of the variables. 
 This provides a new family of absolutely continuous symmetric copulas
 with given diagonal section enriching previous work on this subject that we discussed, see 
 \cite{bertino},\cite{durante2008absolutely},\cite{Durante2007},\cite{durantemesiar},\cite{ErdelyGonzalez2006},\cite{Fredricks1997},\cite{Nelsen2008473}.
 We also calculate the maximum entropy copula for diagonal sections that arise from well-known families of bivariate copulas. 
 
 The rest of the paper is organised as follows. Section \ref{sec:main} introduces the definitions and notations used later on, and gives 
 the main theorems of the paper.  In Section \ref{sec:S=01} we study the properties of the feasible
 solution $c_{\delta}$ of the problem for a special class of diagonal sections with $\Sigma_\delta = \{0,1\}$. 
 In Section \ref{sec:min_prob}, we formulate our problem as a linear optimization problem in order to 
 apply the theory established in \cite{borwein1994entropy}.
 Then in Section \ref{sec:proof-theo-spec} we give the proof for our main theorem showing that $c_{\delta}$ is indeed the optimal solution
 when $\Sigma_\delta = \{0,1\}$.
 In Section \ref{sec:proof-theo-gen} we extend our results for the general case when $\Sigma_\delta$ has zero Lebesgue measure. 
 We give in Section \ref{sec:examples} several examples with diagonals of popular bivariate 
 copula families such as the Gaussian, Gumbel or Farlie-Gumbel-Morgenstern copulas among others. 
 
\section{Main results} \label{sec:main}
 Let $d \geq 2$ be fixed.
We  recall  a  function $C$  defined  on  $I^d$,  with $I=[0,1]$,  is  a
$d$-dimensional copula if there exists a random vector $U=(U_1, \hdots, U_d)$
such that $U_i$ are uniform  on $I$ and  $C(u)=\P(U \leq u)$ for
$u\in I^d$, with the convention that $x \leq y$ for $x=(x_1,\hdots.x_d)$ and 
$y=(y_1,\hdots, y_d)$ elements of $\R^d$ if and only if $x_i \leq y_i$ for all
$1\leq i \leq d$. 
We shall say that $C$ is the copula of $U$. We refer to
\cite{Nelsen1999} for  a monograph on  copulas. The copula $C$  is said
absolutely  continuous if  the random  variable $U$  has  a density,
which  we   shall  denote  by  $c_C$.   In  this  case,   we  have  that
a.e. $c_C(u) = \partial^d_{u_1,\hdots,u_d} \,  C(u)$ for $u\in I^d$. When there
is no confusion, we shall write  $c$ for the density $c_C$ associated to
the copula $C$.   We denote by $\cc$ the  set of $d$-dimensional copulas
and by $\cc_0$ the
subset of  the $d$-dimensional absolutely continuous  copulas.  

\esp The diagonal section $\delta_C$ of a copula
$C$  is defined  by: $\delta_C(t)=C(t,\hdots,t)$. Let us note, for $u \in \R^d$,
$\max(u)=\max_{1\leq i \leq d} u_i$.  Notice  that if  $C$ is  the
copula  of $U$, then  $\delta_C$ is  the cumulative distribution function of 
$\max(U)$ as $\delta_C(t)=\P(\max(U)\leq t)$ for $t\in I$.
We denote by $\cd=\{\delta_C, C\in \cc\}$ the set of diagonal sections of
$d$-dimensional copulas and by $\cd_0=\{\delta_C; C\in  \cc_0\}$ the set of
diagonal sections of absolutely continuous copulas. 
According to \cite{Fredricks1997}, a function
$\delta$ defined on $I$ belongs to $\cd$ if and only if:
\begin{itemize}
\item[(i)]  $\delta$ is  a  cumulative   function  on $[0,1]$:  $
  \delta(0)=0$, $\delta(1)=1$ and $ \delta $ is non decreasing;
 \item[(ii)]  $ \delta(t) \leq t$ for $t \in I$ and $\delta$ is  $d$-Lipschitz: $
   \val{\delta(s)-\delta(t)} \leq d\val{s-t} $ for $s,t\in I$. 
\end{itemize}
For $\delta\in  \cd$, we shall consider the  set $\cc^\delta=\{C\in \cc;
\delta_C=\delta\}$ of  copulas with  diagonal section $\delta$,  and the
subset  $\cc^\delta_0=\cc^\delta\bigcap \cc_0$ of  absolutely continuous
copulas       with      section       $\delta$.        According      to
\cite{durante2008absolutely}   and   \cite{Jaworski20092863},  the   set
$\cc^\delta_0$    is   non   empty    if   and    only   if    the   set
$\Sigma_{\delta}=\{t\in I; \delta(t)=t\}$  has zero Lebesgue measure. 

\esp For a non-negative measurable function $f$ defined on $I^k,k \in \N^*$, we set
\[
\ci_k(f) = \int_{I^k} f(x)\log(f(x)) \, dx.
\]
Since copulas  are cumulative
functions of probability measures, we will
consider  the  Kull\-back-Leibler   divergence  relative  to  the  uniform
distribution as a measure of entropy, see \cite{csiszar1975divergence}:
\[
\ci(C)=
\begin{cases}
   \ci_d(c) & \text{ if $C\in
     \cc_0$,}\\
   +\infty  & \text{ if $C\not \in
     \cc_0$,}
\end{cases}
\]
with $c$ the density associated to $C$ when $C\in\cc_0$. 
Notice  the  Shannon-entropy  introduced  in \cite{Shannon1948}  of  the
probability  measure $P$  defined  on $I^d$  with cumulative  distribution
function $C$ is defined  as $H(P)=-\ci(C)$. Thus minimizing the Kullback-Leibler
divergence $\ci$  (w.r.t.  the  uniform   distribution)  is   equivalent  to
maximizing the Shannon-entropy. It is well known that the copula $\Pi$
with density $c_\Pi=1$, which corresponds to $(U_i, 0 \leq i \leq d)$ being independent,
minimizes $\ci(C)$ over $\cc$.

\esp  We shall minimize the entropy $\ci$ over the set $\cc^\delta$  or  equivalently over
$\cc^\delta_0$ of copulas with a given diagonal section $\delta\in \cd$ (in fact for
$\delta\in \cd_0$ as otherwise $\cc^\delta_0$  is empty). If
$C$  minimizes $\ci$  on $\cc^\delta$,  it means  that $C$  is  the least
informative (or the ``most  random'') copula with given diagonal section
$\delta$.

\esp For $\delta\in \cd$, let us denote:
\begin{equation}
 \label{eq:def-jd}
\cj(\delta)= \int_I \val{\log (t-\delta(t))}\, dt.
\end{equation}
Notice that $\cj(\delta)\in [0,+\infty ]$  and it is infinite if $\delta\not\in
\cd_0$. Since $\delta$  is  $d$-Lipschitz, the  derivative $\delta'$  of
$\delta$ exists a.e. and since $\delta$  is non-decreasing we have a.e. $0\leq
\delta'\leq d$. This implies that $\ci_1(\delta')$ and $\ci_1(d-\delta')$ are well
defined. Let us denote:
\begin{equation}
 \label{eq:def-gd}
\cg(\delta)=\ci_1(\delta')+\ci_1(d-\delta')- d \log(d) -(d-1).
\end{equation}
 We have the rough upper bound:
\begin{equation}
   \label{eq:D-bound}
\sup_{\delta\in \cd} |\cg(\delta)|\leq  d+d\log(d).
 \end{equation}

The following Proposition gives an absolutely continuous copula whose diagonal section is 
$\delta$. The proof of this Proposition can be found in Section \ref{sec:S=01} and 
Section \ref{app:proofKL} is dedicated to the proof of \reff{eq:ci=cj+g}.
\begin{prop}
 \label{lem:cd=density} 
 Let $\delta  \in \cd_0  $ with  $\Sigma_{\delta}=\{0,1\}$. We define, for $r\in I$:
\begin{equation*}
  h(r)=r-\delta(r), \quad
  F(r)= \frac{d-1}{d}\int_{\inv{2}}^r \frac{1}{h(s)} \, ds,
\end{equation*}

\begin{equation}
   \label{eq:a-b}
    a(r) = \frac{d- \delta'(r)}{d} h(r)^{-1+1/d}\expp{F(r)} \quad \text{and} \quad
    b(r) = \frac{ \delta'(r)}{d}h(r)^{-1+1/d}\expp{-(d-1)F(r)}. 
\end{equation} 
Then $c_\delta$ defined a.e. by
\begin{equation}
\label{eq:cd_def}
 c_{\delta}(x) = b(\max(x)) \prod_{x_i \neq \max(x)} a(x_i) \,  \quad \quad \quad 
 \text{ for }\,  x=(x_1, \hdots, x_d)\in I^d,
\end{equation}
is the density of a symmetric copula $C_\delta$ with diagonal section $\delta$.  
Furthermore, we have:
\begin{equation}
   \label{eq:ci=cj+g}
\ci(C_\delta)= (d-1)\cj(\delta) + \cg(\delta).
\end{equation}
\end{prop}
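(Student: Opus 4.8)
The plan is to read $c_\delta$ as the joint density of a random vector $U=(U_1,\dots,U_d)$ and to check the three defining features of a copula density (nonnegativity, total mass one, uniform one-dimensional marginals) together with the diagonal identity, and then to evaluate the entropy integral by exploiting the product structure of $c_\delta$. Every computation is organised by the antiderivative
\[
A(r)=\int_0^r a(s)\,ds ,
\]
and the first step is to establish the closed form $A(r)=h(r)^{1/d}\expp{F(r)}$ on $I$: one differentiates the right-hand side, uses $h'=1-\delta'$ and $F'=\frac{d-1}{d}\,h^{-1}$ to recognise $a$, and fixes the constant by $A(0)=0$ together with $\lim_{r\to0}h(r)^{1/d}\expp{F(r)}=0$ (both factors vanish, since $h(s)\le s$ forces $F(r)\to-\infty$). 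This yields the simplified forms $a=\frac{(d-\delta')A}{dh}$ and $b=\frac{\delta'}{dA^{d-1}}$, equivalently $\frac{ha}{A}=\frac{d-\delta'}{d}$ and $b\,A^{d-1}=\frac{\delta'}{d}$; these identities, not the original formulas, drive everything below.

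Nonnegativity follows from $0\le\delta'\le d$ and $h>0$ on $(0,1)$, and symmetry of $c_\delta$ is immediate. Since ties form a null set I condition on which coordinate attains the maximum: summing over the $d$ positions and integrating the remaining coordinates against $A$ gives $\int_{[0,r]^d}c_\delta=d\int_0^r b\,A^{d-1}\,dm=\delta(r)$, so letting $r\uparrow1$ yields total mass $\delta(1)=1$ and, on differentiating in $r$, shows $\max(U)$ has density $\delta'$, i.e. diagonal section $\delta$. For the marginal of $U_1$ I split into the event that $U_1$ is the maximum (contributing $b\,A^{d-1}=\delta'/d$) and its complement, which equals $(d-1)a(t)\int_t^1 b(m)A(m)^{d-2}\,dm$; using $b\,A^{d-2}=\frac{\delta'}{dA}$ and the antiderivative identity $\frac{d}{dr}\big(\tfrac{h}{A}\big)=-\tfrac{d-1}{d}\tfrac{\delta'}{A}$, this integral collapses to $a(t)\frac{h(t)}{A(t)}=\frac{d-\delta'(t)}{d}$, so the marginal is $\frac{\delta'}{d}+\frac{d-\delta'}{d}=1$. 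The only limit needed is $\lim_{r\to1}\frac{h(r)}{A(r)}=0$, which is clear from $\frac{h}{A}=h^{(d-1)/d}\expp{-F}$ (the Lipschitz bound $h(s)\le(d-1)(1-s)$ forces $F\to+\infty$ at $1$, so both factors vanish).

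For the entropy I use $\log c_\delta(x)=\log b(\max x)+\sum_{x_i\ne\max x}\log a(x_i)$. Integrating the first term against the law $\delta'$ of $\max(U)$ and the second against the non-maximal intensity $d-\delta'$ (read off from the marginal computation) gives
\[
\ci(C_\delta)=\int_0^1\delta'\log b\,dr+\int_0^1(d-\delta')\log a\,dr .
\]
Substituting $\log a=\log\tfrac{d-\delta'}{d}+\log A-\log h$ and $\log b=\log\tfrac{\delta'}{d}-(d-1)\log A$ and collecting, the $\log\frac{\delta'}{d}$ and $\log\frac{d-\delta'}{d}$ pieces give $\ci_1(\delta')+\ci_1(d-\delta')-d\log d$ (using $\int_0^1\delta'\,dr=1$, $\int_0^1(d-\delta')\,dr=d-1$); the $-\log h$ piece gives $(d-1)\cj(\delta)$ after writing $d-\delta'=(d-1)+h'$ and using $\int_0^1 h'\log h\,dr=[\,h\log h-h\,]_0^1=0$; and the $\log A$ pieces combine to $d\int_0^1 h'\log A\,dr$, which integration by parts (with $\frac{ha}{A}=\frac{d-\delta'}{d}$) turns into $d\,[\,h\log A\,]_0^1-(d-1)$. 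Assembling these reproduces $(d-1)\cj(\delta)+\cg(\delta)$ exactly, provided $[\,h\log A\,]_0^1=0$.

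The vanishing of this boundary term is the main obstacle. Writing $h\log A=\tfrac1d h\log h+hF$ and noting $h\log h\to0$, it remains to show $\lim_{r\to0}h(r)F(r)=\lim_{r\to1}h(r)F(r)=0$, i.e. $h(r)\int \frac{ds}{h(s)}\to0$ at each endpoint. Here the Lipschitz estimates $h(s)\le s$ near $0$ and $h(s)\le(d-1)(1-s)$ near $1$ bound the growth of the inner integral, and combining them with $h\to0$ (a de l'Hospital comparison for $\int_r^{1/2}\frac{ds}{h(s)}\big/\frac{1}{h(r)}$, whose candidate value is $\lim h/h'=0$) gives the claim; the degenerate case where $\delta'\to1$ at an endpoint, so that $h'\to0$, requires a separate direct estimate. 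This is precisely the analytic step deferred to Section~\ref{app:proofKL}; when $\cj(\delta)=+\infty$ both sides of \reff{eq:ci=cj+g} are $+\infty$, so only the case $\cj(\delta)<\infty$ needs it.
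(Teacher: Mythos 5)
The first half of your argument (the closed form $A=h^{1/d}\expp{F}$, the marginal and diagonal computations obtained by conditioning on which coordinate is maximal, and the endpoint limits $A(0)=0$, $h/A\to 0$ at $1$) is correct and is essentially the paper's own proof that $c_\delta$ is a symmetric copula density with diagonal $\delta$. The gap is in the entropy computation, and it is not a deferrable technicality: the lemma you reduce everything to, namely $\lim_{r\to0}h(r)F(r)=\lim_{r\to1}h(r)F(r)=0$, is \emph{false} under the hypotheses of the Proposition, even when $\cj(\delta)<+\infty$. Take $d=2$ and build $h=\mathrm{id}-\delta$ with ``valleys'': on each dyadic scale let $h$ rise with slope $1$ (i.e.\ $\delta'=0$) to height about $2^{-n-1}$ at a point $q_n\approx 2^{-n}$, then fall with slope $-1$ (i.e.\ $\delta'=2$) down to the value $\epsilon_n=\expp{-2^n}$, then rise again towards the next scale. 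Each valley contributes at most $Cn2^{-n}$ to $\int_I|\log h|$, so $\cj(\delta)<+\infty$, and $\Sigma_\delta=\{0,1\}$; but $|F(q_n)|\ge \tfrac12\log\bigl(h(q_n)/\epsilon_n\bigr)\approx 2^{n-1}$, hence $h(q_n)|F(q_n)|\ge 1/4$ along $q_n\to 0$. So your boundary term $[\,h\log A\,]_0^1=[\,\tfrac1d h\log h+hF\,]_0^1$ has no limit at $0$. The l'H\^opital argument cannot rescue this: the ratio of derivatives is $h/h'$, which has no limit (in the example $h'$ changes sign on every scale), so the rule simply does not apply --- and, as shown, the conclusion fails.

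The non-existence of the boundary term is a symptom of an earlier illegitimate step. In the same example one checks that $\int_0^1\delta'\,|F|\,dr=\int_0^1(d-\delta')\,|F|\,dr=+\infty$ (each valley contributes a fixed positive amount), so your starting identity $\ci(C_\delta)=\int_0^1\delta'\log b\,dr+\int_0^1(d-\delta')\log a\,dr$ is of the form $(+\infty)+(-\infty)$, and the subsequent regrouping of the $\log A$ pieces is an $\infty-\infty$ manipulation; Fubini is not available, even though $\ci(C_\delta)$ itself is finite. This is precisely what the paper's appendix is engineered to avoid: it truncates to $[\varepsilon,1-\varepsilon]^d$ \emph{before} any splitting, keeps the two divergence-prone $F$-contributions together in the single term
\begin{equation*}
J_3(\varepsilon)= d \int_{[\varepsilon,1-\varepsilon]^d} c_\delta(x)\,\ind_{\{\max(x)=x_d\}}
\left((d-1)F(x_d)-\sum_{i=1}^{d-1}F(x_i)\right)dx,
\end{equation*}
which after an exact integration by parts becomes an integral of a \emph{non-negative} integrand converging to $d-1$, explicitly bounds the truncation errors ($J_{1,2}$, $J_{1,3}$, $J_{2,2}$), and only then passes to the limit via monotone convergence (using $x\log x\ge -1/\expp{}$). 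Your outline contains no substitute for this mechanism, and the route you propose for closing the gap cannot work as stated. (Separately, the case $\cj(\delta)=+\infty$ is not automatic either: that $\ci(C_\delta)=+\infty$ then is itself a consequence of the paper's monotone-convergence scheme, not of the statement being ``$+\infty$ on both sides''.)
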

This and \eqref{eq:D-bound} readily
implies the following Remark.
\begin{rem}  
\label{lem:finiteKL}  
Let $\delta  \in  \cd_0  $ such  that
  $\Sigma_{\delta}=\{0,1\}$.  We have $\ci(C_{\delta})  < + \infty$ if and
  only if $\cj(\delta)  < +\infty$. 
\end{rem}
We can now state our main result in the simpler case
$\Sigma_{\delta}=\{0,1\}$. It gives the necessary and
sufficient condition for $C_\delta$ to be the 
unique optimal solution of the minimization problem.
The proof is given in Section
\ref{sec:proof-theo-spec}. 
\begin{theo} 
\label{theo:spec}
Let $\delta \in \cd_0 $ such that $\Sigma_{\delta}=\{0,1\}$. 
\begin{itemize}
   \item[a)] If $ \cj(\delta)= +\infty$ then
     $\min_{C \in \cc^\delta} \ci(C)=+\infty $. 
   \item[b)] If $\cj(\delta) < +\infty$ then
$ \min_{C \in \cc^\delta} \ci(C)<+\infty $ and  $C_\delta$ is the unique copula such that
$\ci\left(C_\delta\right)=\min_{C \in \cc^\delta}\ci(C)$.
\end{itemize}
\end{theo}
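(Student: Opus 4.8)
The plan is to reduce both parts of the statement to a single Gibbs-type variational inequality comparing an arbitrary competitor $C\in\cc^\delta$ against the explicit density $c_\delta$ supplied by Proposition \ref{lem:cd=density}. The engine is the elementary inequality $a\log(a/b)\ge a-b$ for $a,b\ge 0$, with the conventions $0\log 0=0$ and $a\log(a/0)=+\infty$ for $a>0$, which holds with equality if and only if $a=b$. Applied pointwise to the densities, this will produce a lower bound for $\ci(C)$ that is the same for every copula with diagonal section $\delta$, and the equality case will hand us uniqueness for free.

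First I would establish an invariance property of the cross term: for every $C\in\cc^\delta_0$ with density $c$,
\[
\int_{I^d} c(x)\log c_\delta(x)\,dx = d\int_0^1 \log a(r)\,dr + \int_0^1 \log\frac{b(r)}{a(r)}\,\delta'(r)\,dr ,
\]
a quantity depending only on $\delta$ and hence equal to $\ci(C_\delta)$. Indeed, since $C$ is absolutely continuous its coordinates attain their maximum at a single index a.e., so from \eqref{eq:cd_def} one has a.e.\ $\log c_\delta(x)=\log b(\max(x))+\sum_{i=1}^d \log a(x_i)-\log a(\max(x))$. Integrating against $c$, the terms involving $\max(x)$ depend only on the law of $\max(U)$, whose cumulative distribution function is $\delta$ and whose density is therefore $\delta'$, while each term $\int c(x)\log a(x_i)\,dx$ equals $\int_0^1\log a(r)\,dr$ because the $i$-th marginal of a copula is uniform on $I$. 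The right-hand side is thus identical for all $C\in\cc^\delta_0$, and taking $C=C_\delta$ identifies it with $\ci(C_\delta)$.

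Next, applying the pointwise inequality with $a=c(x)$ and $b=c_\delta(x)$ and integrating over $I^d$ gives
\[
\ci(C)=\int_{I^d} c\log c\,dx \ \ge\ \int_{I^d} c\log c_\delta\,dx + \int_{I^d}(c-c_\delta)\,dx = \int_{I^d} c\log c_\delta\,dx = \ci(C_\delta),
\]
using that $c$ and $c_\delta$ both integrate to $1$ and the invariance just proved; for $C\not\in\cc_0$ the bound is trivial since $\ci(C)=+\infty$ by definition. For part a), if $\cj(\delta)=+\infty$ then $\ci(C_\delta)=+\infty$ by \eqref{eq:ci=cj+g} together with the boundedness \eqref{eq:D-bound} of $\cg$ (equivalently, Remark \ref{lem:finiteKL}), whence $\ci(C)=+\infty$ for all $C\in\cc^\delta$ and $\min_{C\in\cc^\delta}\ci(C)=+\infty$. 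For part b), if $\cj(\delta)<+\infty$ then $\ci(C_\delta)<+\infty$ and $C_\delta\in\cc^\delta$ by Proposition \ref{lem:cd=density}, so the inequality shows $C_\delta$ attains the finite minimum; and if $\ci(C)=\ci(C_\delta)$ then the integrated inequality is saturated, which by the equality case forces $c=c_\delta$ a.e., i.e.\ $C=C_\delta$, giving uniqueness.

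The main obstacle will be integrability: I must check that the cross term $\int c\log c_\delta$ is genuinely well defined (never of the form $\infty-\infty$) so that the invariance and the variational step are legitimate, and I must handle the zeros of $c_\delta$, which occur where $\delta'\in\{0,d\}$ and on which any finite-entropy competitor is forced to vanish. In case b) the finiteness of $\cj(\delta)$ controls the negative part of $\log c_\delta$ coming from the factor $h^{-1+1/d}$ in \eqref{eq:a-b}, so the cross term is finite and the equality analysis is clean; in case a) the delicate point is to confirm that the decomposition in the second paragraph diverges to $+\infty$ rather than being indeterminate, which is exactly where the finiteness and boundedness facts recorded in Proposition \ref{lem:cd=density} and \eqref{eq:D-bound} are used.
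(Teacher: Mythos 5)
Your strategy --- prove that the cross term $\int_{I^d} c\log c_\delta\,dx$ is the same for every $C\in\cc_0^\delta$ and conclude by Gibbs' inequality --- is genuinely different from the paper's proof, which gets existence/uniqueness and the product form of the optimizer from the abstract Borwein--Lewis--Nussbaum theorem and a factorization result of R\"uschendorf, and then identifies $a^*,b^*$ from the constraints. Your route, if completed, would be more elementary. But as written its load-bearing step has a real hole: the displayed invariance identity
\[
\int_{I^d} c(x)\log c_\delta(x)\,dx = d\int_0^1 \log a(r)\,dr + \int_0^1 \log\frac{b(r)}{a(r)}\,\delta'(r)\,dr
\]
can read $(+\infty)+(-\infty)$ even when $\cj(\delta)<+\infty$, so the marginal-by-marginal splitting you use to justify it is not legitimate. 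Concretely, take $d=2$ and $\delta$ with $h(t)=t-\delta(t)=(1-t)^2$ near $t=1$ (so $\delta'(t)=1+2(1-t)\in[0,2]$ there, and $\cj(\delta)<\infty$ since $|\log h|\sim 2|\log(1-t)|$): then $F(r)\sim \tfrac{1}{2}(1-r)^{-1}$, so $\int_0^1 F=+\infty$, hence $\int_0^1\log a\,dr=+\infty$, while $\int_0^1\log(b/a)\,\delta'\,dr$ contains $-2\int F\delta'=-\infty$. Your diagnosis of the integrability obstacle is also off: since $h\le 1$, the factor $h^{-1+1/d}\ge 1$ contributes to the \emph{positive} part of $\log c_\delta$ (and that part is exactly what $\cj(\delta)<\infty$ controls); the dangerous terms are the $F$'s above and $\log\bigl((d-\delta')/d\bigr)$, whose one-dimensional integrals need not converge at all under $\cj(\delta)<\infty$ (e.g.\ $\delta'=d$ on a set of positive measure, as in Section \ref{sec:ex-pie-lin} with $\alpha=1/2$).

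The invariance lemma itself is true, but proving it requires cancellations that your splitting destroys, and these are where the real work lies. For the $F$-terms one must keep them grouped and use Fubini: for any $C\in\cc_0^\delta$ one has $\P(U_i\le s\le\max(U))=s-\delta(s)=h(s)$, hence
\[
\int_{I^d}c(x)\sum_{i=1}^d\bigl(F(\max(x))-F(x_i)\bigr)\,dx
=\frac{d-1}{d}\int_0^1\frac{\sum_{i=1}^d\P\bigl(U_i\le s\le\max(U)\bigr)}{h(s)}\,ds=d-1,
\]
the same finite value for every competitor. For the $\log\bigl((d-\delta')/d\bigr)$-terms one needs the structural observation that the constraints force the density of $U_i$ restricted to $\{U_i\neq\max(U)\}$ to be at most $d-\delta'$ (the argmax marginal densities $m_j$ satisfy $m_j\le 1$ and $\sum_j m_j=\delta'$, so $1-m_i\le d-\delta'$); this makes each such term absolutely convergent and makes their sum equal $\int_0^1\log\frac{d-\delta'(r)}{d}\,(d-\delta'(r))\,dr$, again competitor-independent. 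With these two repairs the cross term equals $(d-1)\cj(\delta)+\cg(\delta)$ for every $C\in\cc_0^\delta$ (and $+\infty$ when $\cj(\delta)=+\infty$, which settles part a)), and the rest of your Gibbs argument, including uniqueness, goes through --- indeed it then re-derives \reff{eq:ci=cj+g} rather than quoting it. As proposed, however, the key identity is false in the form stated and its proof sketch collapses exactly at the point where the paper's own proof (via its Appendix's $\varepsilon$-truncation analysis, or via the duality machinery) does the hard work.
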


\esp To give  the answer in the general case where $\Sigma_\delta$ has zero Lebesgue measure, we need  some extra notations.
Since $\delta$ is continuous,  we get that $I \setminus \Sigma_{\delta}$
can be  written as  the union of  non-empty open  intervals $((\alpha_j,
\beta_j),   j\in  J)$,   with   $\alpha_j<\beta_j$  and   $J$  at   most
countable.      Notice     that      $\delta(\alpha_j)=\alpha_j$     and
$\delta(\beta_j)=\beta_j$. For $J\neq \emptyset$ and $j\in J$, we set
$\Delta_j=\beta_j -\alpha_j$ and for $t\in I$:
\begin{equation}
   \label{eq:delta-j}
 \delta^j(t) =
 \frac{\delta\left(\alpha_j+ t\Delta_j \right)-\alpha_j}{\Delta_j} \cdot
\end{equation}
It is clear that $\delta^j$ satisfies (i) and (ii) and it belongs to
$\cd_0$ as  $\Sigma_{\delta^j}=\{0,1\}$. 
Let $c_{\delta^j}$ be defined by \reff{eq:cd_def} with $\delta$ replaced
by $\delta^j$. For $\delta\in \cd_0$ such that $\Sigma_\delta\neq \{0,1\}$, we define
the function  $c_\delta$ by, for $u\in I^d$:
\begin{equation}
   \label{eq:density-gen}
c_\delta(u)=\sum_{j\in J} \inv{\Delta_j} c_{\delta^j} 
\left(\frac{u-\alpha_j\mathbf{1}}{\Delta_j} \right) \; 
\,\ind_{(\alpha_j,\beta_j)^d}(u),
\end{equation}
with $\mathbf{1}=(1,\hdots,1) \in \R^d$.
It is easy to check that $c_\delta$ is a copula density and that is zero
outside $[\alpha_j,\beta_j]^d$ for $j\in J$.   We state our
main  result  in  the general  case  whose  proof  is given  in  Section
\ref{sec:proof-theo-gen}.

\begin{theo} 
\label{theo:gen}
Let $\delta \in \cd $. 
\begin{itemize}
   \item[a)] If $\cj(\delta) = +\infty$ then
     $\min_{C \in \cc^\delta} \ci(C)=+\infty $. 
   \item[b)] If $\cj(\delta)  < +\infty$ then
$ \min_{C \in \cc^\delta} \ci(C)<+\infty $ and there exists a unique
copula $C_\delta\in \cc^\delta$ such that
$\ci\left(C_\delta\right)=\min_{C \in \cc^\delta}\ci(C)$. 
Furthermore, we have:
\[
\ci(C_\delta)= (d-1)\cj(\delta) + \cg(\delta);
\]  
the copula $C_\delta$ is absolutely continuous,
symmetric;  its density $c_\delta$ is given by 
\reff{eq:cd_def} if $\Sigma_\delta= \{0,1\}$ or by 
\reff{eq:density-gen} if $\Sigma_\delta\neq \{0,1\}$.
\end{itemize}
\end{theo}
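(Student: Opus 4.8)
The plan is to reduce the general case (Theorem \ref{theo:gen}) to the already-established special case $\Sigma_\delta=\{0,1\}$ (Theorem \ref{theo:spec}) by decomposing the problem along the open intervals $((\alpha_j,\beta_j),\,j\in J)$ that make up $I\setminus\Sigma_\delta$. Part a) is the easy direction: if $\cj(\delta)=+\infty$, I would show that no copula in $\cc^\delta$ can have finite entropy. Indeed, any $C\in\cc^\delta_0$ must, on the diagonal blocks $(\alpha_j,\beta_j)^d$, restrict to (a rescaling of) a copula with diagonal section $\delta^j$, and since $\cj(\delta)=\sum_{j\in J}\Delta_j\,\cj(\delta^j)$ (up to the elementary rescaling identity one checks from \reff{eq:delta-j}), infiniteness of $\cj(\delta)$ forces $\cj(\delta^j)=+\infty$ for some $j$, whence Theorem \ref{theo:spec}a) applied to that block gives $\ci(C)=+\infty$.

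For part b), the first step is to verify that the candidate density $c_\delta$ given by \reff{eq:density-gen} is indeed the density of a copula in $\cc^\delta$: this is asserted in the excerpt to be an easy check, and it follows because each summand is a rescaled copula density on its block $(\alpha_j,\beta_j)^d$, the blocks are disjoint, and the mass and marginal-uniformity constraints are satisfied using $\delta(\alpha_j)=\alpha_j$, $\delta(\beta_j)=\beta_j$ together with the fact that $\delta$ is flat (equal to the identity only on $\Sigma_\delta$, which carries no mass). The crucial structural point I would establish is that the entropy functional decomposes as a sum over the blocks,
\begin{equation}
\label{eq:entropy-decomp-plan}
\ci(C)=\sum_{j\in J}\Delta_j\,\ci\big(C^{(j)}\big),
\end{equation}
where $C^{(j)}$ is the copula obtained by restricting and rescaling $C$ to the block $(\alpha_j,\beta_j)^d$; the off-block contribution vanishes because any admissible $C$ must place no mass outside $\bigcup_j[\alpha_j,\beta_j]^d$ (mass outside these squares would be incompatible with the diagonal section on $\Sigma_\delta$, which has zero Lebesgue measure). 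With \eqref{eq:entropy-decomp-plan} in hand, minimizing $\ci(C)$ over $\cc^\delta$ separates into independently minimizing each $\ci(C^{(j)})$ over $\cc^{\delta^j}$, and Theorem \ref{theo:spec}b) identifies the unique minimizer of each block as $C_{\delta^j}$. Reassembling via \reff{eq:density-gen} yields $C_\delta$ as the unique global minimizer, and the closed-form value $\ci(C_\delta)=(d-1)\cj(\delta)+\cg(\delta)$ follows by summing \reff{eq:ci=cj+g} across blocks and using the additivity of both $\cj$ and $\cg$ under the block decomposition.

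The main obstacle I expect is the rigorous justification that every competitor $C\in\cc^\delta$ (not merely the candidate) must be block-diagonal, i.e.\ supported on $\bigcup_{j\in J}[\alpha_j,\beta_j]^d$. This is the content that makes the separation \eqref{eq:entropy-decomp-plan} legitimate rather than merely a property of the candidate. The argument should run through the probabilistic meaning of the diagonal section: if $U$ has copula $C$ and $\delta(t)=t$ on $\Sigma_\delta$, then on each maximal flat-difference interval the vector $\max(U)$ cannot cross the boundaries $\alpha_j,\beta_j$ without creating an atom or violating $\delta_C=\delta$, so almost surely all coordinates of $U$ lie in the same block. Formalizing this — controlling the event that coordinates straddle two different blocks and showing it has probability zero under the diagonal constraint — is the delicate step; everything else is bookkeeping with the rescaling \reff{eq:delta-j} and the additivity of $\cj$ and $\cg$, which are routine changes of variables.
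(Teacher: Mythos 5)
Your overall strategy is the paper's strategy (block decomposition of $I\setminus\Sigma_\delta$, rescaling, Theorem \ref{theo:spec} on each block), and the step you single out as the main obstacle --- that every competitor is supported on $S=\bigcup_{j\in J}[\alpha_j,\beta_j]^d$ --- is actually the easy part: for $s\in \Sigma_\delta$, any feasible $c$ and any $i$, one has $\int_{I^d}c(u)\ind_{\{u_i<s\}}\ind_{\{\exists k\neq i,\ u_k\geq s\}}\,du=s-\delta(s)=0$, and applying this for each $i$ and for $s$ ranging over the countably many endpoints $\alpha_j,\beta_j$ gives $c=0$ a.e.\ off $S$; no atom/straddling analysis is needed. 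The genuine gap is in your two additivity formulas, which both omit the Jacobian log-terms created when a block is rescaled onto the unit cube. The correct identities (see \reff{eq:cidc} and the displayed identities at the end of Section \ref{sec:proof-theo-gen}) are
\[
\ci_d(c)=\sum_{j\in J}\Delta_j\bigl(\ci_d(c^j)-\log\Delta_j\bigr),
\qquad
(d-1)\cj(\delta)=\sum_{j\in J}\Delta_j\bigl((d-1)\cj(\delta^j)-\log\Delta_j\bigr),
\]
not $\ci(C)=\sum_j\Delta_j\,\ci(C^{(j)})$ and $\cj(\delta)=\sum_j\Delta_j\,\cj(\delta^j)$ as you assert (the precise coefficient of $\log\Delta_j$ depends on the normalization of the rescaling; what matters is that these non-negative terms are present). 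In part b) your two omissions happen to cancel: the log-terms do not depend on $C$, so the blockwise separation of the minimization remains valid, and summing \reff{eq:ci=cj+g} with your two wrong identities returns the correct value $(d-1)\cj(\delta)+\cg(\delta)$. So b) survives, but only by compensation of errors; moreover the interchange of limits needed to sum over infinitely many blocks (handled in the paper via $x\log x\geq -1/\expp{}$, the uniform bound \reff{eq:D-bound} on $\cg$, and monotone convergence) is not addressed.

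In part a) the omission is fatal. It is \emph{not} true that $\cj(\delta)=+\infty$ forces $\cj(\delta^j)=+\infty$ for some $j$: take infinitely many blocks, each carrying a rescaled copy of a single diagonal $\delta^0\in\cd_0$ with $\Sigma_{\delta^0}=\{0,1\}$ and $\cj(\delta^0)<+\infty$, with lengths satisfying $\sum_{j}\Delta_j=1$ and $-\sum_j\Delta_j\log\Delta_j=+\infty$ (e.g.\ $\Delta_j$ proportional to $j^{-1}(\log(1+j))^{-2}$). Then $\cj(\delta^j)=\cj(\delta^0)<+\infty$ for every $j$, yet $\cj(\delta)=+\infty$ by the correct identity above, and for such $\delta$ your argument concludes nothing: every block admits finite-entropy copulas, and the infiniteness of $\min_{C\in\cc^\delta}\ci(C)$ comes entirely from the $-\log\Delta_j$ terms you dropped. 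The repair is the paper's argument, which treats a) and b) simultaneously: any feasible $c$ decomposes as \reff{eq:cj}, Theorem \ref{theo:spec} gives the blockwise lower bound $\ci_d(c^j)\geq(d-1)\cj(\delta^j)+\cg(\delta^j)$ (the value $+\infty$ being allowed), hence $\ci_d(c)\geq(d-1)\cj(\delta)+\cg(\delta)$; this is $+\infty$ when $\cj(\delta)=+\infty$, which proves a), and when $\cj(\delta)<+\infty$ the bound is attained exactly by the blockwise assembly \reff{eq:density-gen} of the densities $c_{\delta^j}$, which proves b).
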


\begin{rem}
   \label{rem:abs-cont}
For $\delta\in \cd$, notice the condition $\cj(\delta)<+\infty $ implies
that $\Sigma_\delta$ has zero Lebesgue measure, and therefore, according
to \cite{durante2008absolutely}  and \cite{Jaworski20092863}, $\delta\in
\cd_0$.  And  if $\delta\not\in \cd_0$,  then $\ci(C)=+\infty $  for all
$C\in \cc^\delta$. Therefore, we could replace the condition $\delta \in
\cd $ by $\delta \in \cd_0 $ in Theorem \ref{theo:gen}.
\end{rem}

\section{Proof of Proposition \ref{lem:cd=density} }
\label{sec:S=01}
We assume that  $\delta \in \cd_0 $ and  $\Sigma_{\delta}=\{0,1\}$. 
We give the proof of Proposition \ref{lem:cd=density}, which states that 
$C_\delta$, with density $c_\delta$ given by \reff{eq:cd_def}, is indeed 
a symmetric copula with diagonal section $\delta$ whose entropy is given by
\reff{eq:ci=cj+g}.

Recall the definition of $h,F,a,b$ and $c_\delta$ from Theorem \ref{theo:spec}.
Notice that by construction $c_\delta$ is non-negative and well defined
on $I^d$. In order to prove that $c_\delta$ is the density of a copula, we only have to
prove that for all $1 \leq i \leq d$, $r\in I$:
 \[
\int_{I^d} c_\delta(u) \ind_{\{u_i\leq r\}} \, du  = r,
\]
or equivalently
 \[
\int_{I^d} c_\delta(u) \ind_{\{u_i\geq r\}} \, du  = 1-r.
\]
We define for $r\in I$:
\begin{equation}
   \label{eq:A-B-int}
A(r)=\int_0^r a(t) \,dt .
\end{equation}
Elementary computations yield for $r\in (0,1)$:
\begin{equation}
   \label{eq:A-B}
  A(r)=h^{1/d}(r)\,\expp{ F(r)} .
\end{equation}
Notice that $F(0)\in [-\infty ,0]$  which
implies that $A(0)=0$. 
A direct integration gives:
\begin{equation} 
\label{eq:A^(d-1)}
 d\int_I A^{d-1}(s) b(s)\ind_{\{s \geq r\}}=1-\delta(r).
\end{equation}
We also have: 
\begin{align}
 \label{eq:A^(d-2)}
 \nonumber (d-1)\int_I A^{d-2}(s) b(s) \, ds\ind_{\{s \geq r\}} 
           & = \frac{(d-1)}{d}\int_I h^{-1/d}(s) \expp{-F(s)}\ind_{\{s \geq r\}} \, ds\\
 \nonumber & = \left[-h^{1-1/d} (s)\expp{-F(s)}\right]^1_{s=r} \\ 
           & = h^{1-1/d} (r)\expp{-F(r)},
\end{align}
where we used for the last step that $h(1)=0$ and $F(1)\in [0,\infty]$. We have:
\begin{align*}
  \int_{I^d} c_\delta(u) \ind_{\{u_i \geq r\}} \, du
&=  \int_{I^d} b(\max(u))\prod_{u_j \neq \max(u)} a(u_j)   \ind_{\{u_i \geq r\}} \, du  \\
&=  \int_I A^{d-1}(s) b(s) \ind_{\{s \geq r\}} \, ds \\
&   \hspace{1.5cm}  + (d-1)\int_I A^{d-2}(s) b(s) (A(s)-A(r)) \ind_{\{s \geq r\}} \, ds \\
&=  d\int_I A^{d-1}(s) b(s) \ind_{\{s \geq r\}} \, ds \\
&   \hspace{1.5cm}  - (d-1)A(r)\int_I A^{d-2}(s) b(s) \ind_{\{s \geq r\}} \, ds \\
&= 1-\delta(r) - (r-\delta(r)) \\
&= 1-r,
\end{align*}
where we first divided the integral according to which $u_i$ was the maximum; then we 
used \reff{eq:A-B-int} for the second equality, finally \reff{eq:A^(d-1)} and \reff{eq:A^(d-2)}
for the forth.
This implies that $c_\delta$ is indeed the density of a copula.
We denote by $C_\delta$ the copula with density $c_\delta$. We check that
$\delta$ is the diagonal section of $C_\delta$. 
Using \reff{eq:A^(d-1)}, we get, for $r\in I$:
\begin{align*}
  \int_{I^d} c_\delta(u) \ind_{\{\max(u) \leq  r\}} \, du 
  &    =  \int_{I^d} b(\max(u)) \prod_{u_i\neq \max(u_i)} a(u_i) \ind_{\{\max(u) \geq r\}} \, du \\
  &    = d\int_{I} A^{d-1}(s)b(s)  \ind_{\{s\leq r\}} \, ds \\
  &    = \delta(r). 
\end{align*}

The calculations which show that the entropy of
$C_\delta$ is given by \reff{eq:ci=cj+g} 
can be found in \mbox{Section \ref{app:proofKL}}.



\section{The minimization problem} \label{sec:min_prob}

Let $\delta \in \cd_0$.
As a first step we  will show, using \cite{borwein1994entropy}, that the
problem of a maximum entropy copula with a given diagonal section
$\delta$  has at  most a  unique  optimal solution.   To formulate  this
problem in the framework  of \cite{borwein1994entropy}, we introduce the
continuous  linear  functional   $\ca=(\ca_i, 1 \leq i \leq d+1)  :L^1(I^d)
\rightarrow L^1(I)^{d+1}$ defined by, for $1 \leq i \leq d$, $f\in L^1(I^d)$ and $r\in I$,
\[
\ca_i(f)(r)=\int_{I^d} f(u)\ind_{\{u_i\leq r\}} \, du, \quad
 \text{and} \quad
\ca_{d+1}(f)(r)=\int_{I^d} f(u)\ind_{\{\max(u)\leq r\}} \, du.
\]
We also  define $b^\delta=(b_i, 1 \leq i \leq d+1) \in  L^1(I)^{d+1}$ with $b_{d+1}=\delta$
and $b_i=\text{id}_I$ for $1 \leq i \leq d$, with $\text{id}_I $ the identity map on $I$.  Notice that the
conditions  $\ca_i(c)=b_i$, $1 \leq i \leq d$,  and $c\geq  0$  a.e.  imply
that $c$ is  the density of a copula $C\in \cc_0$.  If we assume further
that the condition $\ca_{d+1}(c)=b_{d+1}$ holds then the diagonal section of $C$
is $\delta$ (thus $C\in \cc^\delta_0$).

Since $\ci$ is infinite outside $\cc_0^\delta$  and the density of
any copula in  $\cc_0$ belongs to $L^1(I^d)$, we get that  minimizing
$\ci$ over $\cc^\delta$ is  equivalent to
the  linear optimization problem $(P^\delta)$ given by:
\begin{equation}
\tag{$P^{\delta}$}
\text{minimize } \ci_d(c) \text{ subject to } 
\begin{cases}
 &  \ca(c)=b^\delta,\\
&c\geq 0 \text{ a.e. and } c\in L^1(I^d). 
\end{cases}
\end{equation}
We  say that  a function  $f$ is  feasible for  $(P^{\delta})$  if $f\in
L^1(I^d)$, $f\geq  0$ a.e.,  $\ca(f)=b^\delta$ and $\ci_d(f)<+\infty  $. Notice
that any feasible $f$ is the density of a copula. We say that $f$ is an
optimal solution  to  $(P^{\delta})$ if $f$ is feasible and
$\ci_d(f)\leq  \ci_d(g)$ for all $g$ feasible.

\begin{prop}
   \label{prop:sym}
   Let  $\delta\in \cd$.  If there  exists  a feasible  $c$, then  there
   exists  a  unique  optimal  solution  to  $(P^{\delta})$  and  it  is
   symmetric.
\end{prop}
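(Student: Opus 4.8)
The plan is to handle $(P^\delta)$ by the direct method, using the strict convexity and the superlinear growth of the entropy integrand, and then to read off symmetry from the invariance of the problem under coordinate permutations. I begin with uniqueness, which is the soft part. The feasible set $\{c\in L^1(I^d): c\ge 0 \text{ a.e.},\ \ca(c)=b^\delta\}$ is convex, being the intersection of the affine set $\{\ca(c)=b^\delta\}$ with the positive cone. Since $t\mapsto t\log t$ is strictly convex, the functional $\ci_d$ is strictly convex on this set. Hence if $c_1\neq c_2$ were both optimal with common finite value $m=\min \ci_d$, the midpoint $(c_1+c_2)/2$ would be feasible and, by strict convexity, satisfy $\ci_d((c_1+c_2)/2)<m$, a contradiction. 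This yields at most one optimal solution.

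Existence is where I expect the main difficulty, and it is the place where the special structure of the entropy is used. Given a feasible point, the infimum $m$ is finite; take a minimizing sequence $(c_n)$ with $c_n\ge 0$, $\ca(c_n)=b^\delta$ and $\ci_d(c_n)\to m$. Each $c_n$ is a copula density, so $(c_n)$ is bounded in $L^1(I^d)$, and because $t\log t$ grows superlinearly (and is bounded below), boundedness of $\ci_d(c_n)$ controls $\int c_n(\log c_n)^+$; the de la Vall\'ee-Poussin criterion then shows $(c_n)$ is uniformly integrable. By the Dunford-Pettis theorem a subsequence converges weakly in $L^1(I^d)$ to some $c_*$. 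The operator $\ca$ is continuous and linear, so the constraints pass to the weak limit, giving $\ca(c_*)=b^\delta$, while nonnegativity survives weak $L^1$-limits, so $c_*\ge 0$ a.e. As $\ci_d$ is convex and strongly lower semicontinuous, it is weakly lower semicontinuous, whence $\ci_d(c_*)\le \liminf_n \ci_d(c_n)=m$. Thus $c_*$ is feasible and optimal. This step is exactly the existence content of the abstract entropy-maximization theorem of \cite{borwein1994entropy} specialized to the present linear problem, and superlinearity of $x\log x$ is precisely what upgrades a minimizing sequence to a genuine minimizer.

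It remains to prove symmetry, which now follows formally from uniqueness together with the permutation invariance of the data. For $\sigma\in\sym_d$ write $c^\sigma(u)=c(u_{\sigma(1)},\dots,u_{\sigma(d)})$. The substitution $v=(u_{\sigma(1)},\dots,u_{\sigma(d)})$ preserves Lebesgue measure on $I^d$, so $\ci_d(c^\sigma)=\ci_d(c)$ and $c^\sigma\ge 0$ whenever $c\ge 0$. A direct change of variables gives $\ca_i(c^\sigma)=\ca_{\sigma^{-1}(i)}(c)$ for $1\le i\le d$, and since $\max$ is permutation-invariant, $\ca_{d+1}(c^\sigma)=\ca_{d+1}(c)$. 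Because $b_i=\mathrm{id}_I$ for every $1\le i\le d$ and $b_{d+1}=\delta$, the target $b^\delta$ is fixed by this relabelling, so $\ca(c^\sigma)=b^\delta$ if and only if $\ca(c)=b^\delta$. Consequently $c\mapsto c^\sigma$ maps feasible points to feasible points of equal entropy, hence optimal solutions to optimal solutions. Applying this to the unique optimal solution $c_*$, we find that $c_*^\sigma$ is also optimal, so by uniqueness $c_*^\sigma=c_*$ a.e. for every $\sigma\in\sym_d$; that is, $c_*$ is symmetric.
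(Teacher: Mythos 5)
Your proof is correct, but it takes a genuinely more self-contained route than the paper for the existence--uniqueness part. The paper disposes of both existence and uniqueness in one stroke by invoking Corollary~2.3 of \cite{borwein1994entropy} (after noting that the constraint $\ca_1(f)(1)=b_1(1)$ forces the normalization $\int_{I^d}f(x)\,dx=1$ required by that corollary), and then concludes symmetry exactly as you do, from uniqueness combined with the permutation invariance of the constraints and of $\ci_d$. You instead reprove the abstract result in this concrete setting: strict convexity of $x\mapsto x\log x$ on the convex feasible set for uniqueness, and the direct method for existence --- a minimizing sequence, control of $\int c_n(\log c_n)^+$ from the pointwise bound $x\log(x)\geq -1/\expp{}$, de la Vall\'ee-Poussin and Dunford--Pettis for weak $L^1$ compactness, passage of the constraints to the weak limit (valid since $\ca$ is bounded linear, or more simply since each test function $\ind_{\{u_i\leq r\}}$, $\ind_{\{\max(u)\leq r\}}$ lies in $L^\infty$), and weak lower semicontinuity of the convex, strongly lower semicontinuous functional $\ci_d$. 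All these steps are sound, and your argument has the merit of making the proposition independent of the abstract machinery of \cite{borwein1994entropy}; what the citation buys the paper is brevity and coherence, since Theorem~2.9 of that same reference is needed anyway in Section~\ref{sec:proof-theo-spec} to identify the form of the optimizer, so the framework must be set up regardless. Your symmetry argument coincides with the paper's, merely spelling out the permutation action and the identity $\ca_i(c^\sigma)=\ca_{\sigma^{-1}(i)}(c)$ that the paper leaves implicit.
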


\begin{proof}
Since $\ca(f)=b^\delta$ implies $\ca_1(f)(1)=b_1(1)$ that is $ \int_{I^d}f(x)\,
dx=1$,  we can directly apply Corollary~2.3 of \cite{borwein1994entropy} which
states that if there exists a feasible $c$, then there exists a
unique optimal solution to $(P^{\delta})$. Since the constraints are symmetric
and the functional $\ci_d$ is also symmetric, we deduce that the unique optimal
solution is also symmetric.
\end{proof}

The next Proposition gives that the set of  zeros of any
non-negative solution $c$ of $\ca(c)=b^\delta$ contains:
\begin{equation}
   \label{eq:def-Z}
Z_\delta=\{u \in I^d; \delta'(\max(u))=0 \text{ or } \exists i \text{ such that } u_i < \max(u) \text{ and } \delta'(u_i)=d\}. 
\end{equation}

\begin{prop}
   \label{prop:0}
   Let  $\delta\in \cd$.  If   $c$ is feasible then $c=0$ a.e. on
   $Z_\delta$ (that is $c\ind_{Z_\delta}=0$ a.e.). 
\end{prop}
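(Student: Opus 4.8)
The plan is to read the feasibility constraints probabilistically; in fact only $c\ge 0$ and $\ca(c)=b^\delta$ are used, not the finiteness of $\ci_d(c)$. Let $U=(U_1,\ldots,U_d)$ be a random vector with density $c$. Then the constraint $\ca_i(c)=\mathrm{id}_I$ says each $U_i$ is uniform on $I$, while $\ca_{d+1}(c)=\delta$ says $\max(U)$ has cumulative distribution function $\delta$, hence law $\delta'(t)\,dt$ since $\delta$ is $d$-Lipschitz and thus absolutely continuous. As $c\,du$ is absolutely continuous with respect to Lebesgue measure and each diagonal $\{u_i=u_j\}$ is Lebesgue-null, we have $\P(U_i=U_j)=0$, so almost surely a single coordinate attains the maximum.

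I would first dispose of the part of $Z_\delta$ where $\delta'(\max(u))=0$. (Note that $\delta'(\max(u))$ is defined for a.e.\ $u\in I^d$: the image of Lebesgue measure under the map $\max$ has density $d\,t^{d-1}$, so a Lebesgue-null set of values of $t$ pulls back to a Lebesgue-null set of $u$, and likewise for each projection $u\mapsto u_i$.) Writing $N_0=\{t\in I;\ \delta'(t)=0\}$, this part is exactly $\{u;\ \max(u)\in N_0\}$, and it carries $c$-mass $\P(\max(U)\in N_0)=\int_{N_0}\delta'(t)\,dt=0$. Since $c\ge 0$, it follows that $c=0$ a.e.\ there.

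For the remaining part of $Z_\delta$, fix $i$ and introduce the sub-probability measure $P_i(A)=\P(U_i\in A,\ U_i=\max(U))$ for Borel $A\subset I$. Because $U_i$ is uniform, $P_i(A)\le \P(U_i\in A)=\int_A dt$, so $P_i$ admits a density $p_i$ with $0\le p_i\le 1$ a.e. Since almost surely exactly one coordinate attains the maximum, summing the identity $\{U_i\in A,\ U_i=\max(U)\}=\{\max(U)\in A,\ U_i=\max(U)\}$ over $i$ gives $\sum_{i=1}^d P_i(A)=\P(\max(U)\in A)$, that is $\sum_{i=1}^d p_i=\delta'$ a.e.\ on $I$. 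The key step is then a pigeonhole argument on $N_d=\{t\in I;\ \delta'(t)=d\}$: there $\sum_{i=1}^d p_i=d$ with each $p_i\le 1$, which forces $p_i=1$ a.e.\ on $N_d$ for every $i$. Consequently $\P(U_i\in N_d,\ U_i<\max(U))=\int_{N_d}(1-p_i)\,dt=0$, so $c=0$ a.e.\ on $\{u;\ u_i<\max(u),\ \delta'(u_i)=d\}$. Taking the union over $i=1,\ldots,d$ and combining with the previous paragraph yields $c=0$ a.e.\ on $Z_\delta$.

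The main obstacle is the third paragraph: setting up the decomposition of each uniform marginal into its \emph{is the maximum} and \emph{is not the maximum} parts, and proving $\sum_i p_i=\delta'$ together with the uniform bound $p_i\le 1$. Once this is in place, the case $\delta'=d$ reduces to the short extremal observation that $d$ numbers in $[0,1]$ summing to $d$ must all equal $1$, while the case $\delta'=0$ is immediate.
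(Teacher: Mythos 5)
Your proof is correct, and on the crucial second half it takes a genuinely different route from the paper. Both arguments start identically: the constraint $\ca_{d+1}(c)=\delta$ means $\max(U)$ has law $\delta'(t)\,dt$, so the region $\{\delta'(\max(u))=0\}$ carries no mass (the paper phrases this via the monotone class theorem, you via the pushforward; same content). For the region $\{u_i<\max(u),\ \delta'(u_i)=d\}$, however, the paper uses the \emph{symmetry} of $c$ to compute exactly $\int_{I^d}c(u)\ind_{\{u_i=\max(u),\,u_i\leq r\}}\,du=\delta(r)/d$, whence $\int_{I^d}c(u)\ind_{\{u_i<\max(u),\,u_i\leq r\}}\,du=\int_0^r\left(1-\delta'(s)/d\right)ds$, and concludes by taking $H=\{\delta'=d\}$. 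You instead decompose each uniform marginal into its ``is the maximum'' part $P_i$ with density $p_i\in[0,1]$, note $\sum_i p_i=\delta'$ a.e.\ (ties being null), and apply the pigeonhole observation that $d$ numbers in $[0,1]$ summing to $d$ are all equal to $1$. What your route buys is that it proves the proposition exactly as stated, for an \emph{arbitrary} feasible $c$: the paper's invocation of ``the symmetrical property of $c$'' is not available for a general feasible density and strictly needs a supplementary remark (e.g.\ first replace $c$ by its symmetrization $\tilde c(u)=\frac{1}{d!}\sum_\sigma c(u_\sigma)$, which is again feasible since the constraints and $Z_\delta$ are symmetric, and note $\tilde c=0$ forces $c=0$ by non-negativity). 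What the paper's route buys, once symmetry is granted, is a shorter, fully explicit identity (each coordinate contributes exactly $\delta/d$ to the maximum) rather than an extremality argument on the densities $p_i$.
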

\begin{proof}
Recall that  $0\leq \delta'\leq d$. 
Since $c\in L^1(I^d)$, the condition $\ca_{d+1}(c)=b_{d+1}$, that is for all
$r\in I$
\[
\int_{I^d} c(u) \ind_{\{\max(u)\leq r\}} \, du = \int_0^r
\delta'(s)\, ds, 
\]
 implies, by the monotone class theorem,  that for all
measurable subset $H$ of $I$, we have:
\[
\int_{I^d} c(u) \ind_H(\max(u)) \, du = \int_H
\delta'(s)\, ds. 
\]
Since $c\geq 0$ a.e., we deduce that a.e. $c(u)\ind_{\{\delta'(\max(u))=0\}}=0$. 

Next, notice that for all $r\in I$,
$ 1 \leq i \leq d$, the symmetrical property of $c$ gives:
\begin{align*}
\int_{I^d} c(u) \ind_{\{u_i < \max(u), u_i \leq r\}}\, du & = \int_{I^d} c(u) \ind_{\{u_i \leq r\}}\, du 
-\int_{I^d} c(u) \ind_{\{u_i = \max(u), u_i \leq r\}}\, du \\
                                                             & = r - \frac{\delta(r)}{d} \\                                                    
                                                             & = \int_0^r \left(1-\frac{\delta'(s)}{d}\right) \, ds.
\end{align*}
This implies that a.e. $c(u)\ind_{\{\exists i \text{ such that } u_i < \max(u), \delta'(u_i)=d\}}=0$.
This gives the result. 
\end{proof}

We define $\mu$ to be the Lebesgue measure restricted to $Z_\delta^c=I^d \setminus Z_\delta$:
$\mu(du) = \ind_{Z_\delta^c} (u) du$. We define, for $f\in L^1(I^d, \mu)$:  
\[
 \ci^\mu(f)= \int_{I^d} f(u) \log(f(u)) \, \mu(du).
\]
From Proposition \ref{prop:0} we can deduce that if $c$ is feasible then $\ci^\mu(c)=\ci_d(c)$.
Let us also define, for $1 \leq i \leq d$, $r \in I$:

\[
  \ca^\mu_i(c)(r) = \int_{I^d} c(u) \ind_{\{u_i \leq r \}} \, \mu(du), \quad
  \text{and} \quad
  \ca^\mu_{d+1}(c)(r) = \int_{I^d} c(u) \ind_{\{\max(u) \leq r \}} \, \mu(du).
\]
The corresponding optimization problem $(P^{\delta}_\mu)$ is given by :
\begin{equation}
\tag{$P^{\delta}_\mu$}
\text{minimize } \ci^\mu(c) \text{ subject to } 
\begin{cases}
 &  \ca^\mu(c)=b^\delta,\\
&c\geq 0 \text{ $\mu$-a.e. and } c\in L^1(I^d, \mu),
\end{cases}
\end{equation}
with $\ca^\mu=\left(\ca^\mu_i, 1\leq i \leq d+1\right)$.
 For $f \in L^1(I^d, \mu)$, we define:
\begin{equation*}
f^\mu =  
\begin{cases}
 &  f \text{ on } Z_\delta^c, \\
&  0 \text{ on } Z_\delta.
\end{cases}
\end{equation*}
Using Proposition \ref{prop:0}, we easily get the following Corollary. 
\begin{cor}
 If $c$ is a solution of $(P_\mu^\delta)$, then $c^\mu$ is a solution of $(P^\delta)$.
 If $c$ is a solution of $(P^\delta)$, then it is also a solution of $(P^\delta_\mu)$.
\end{cor}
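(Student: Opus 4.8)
The plan is to use the zero-extension map $f \mapsto f^\mu$ together with its inverse, restriction to $Z_\delta^c$, to set up a correspondence between the feasible points of $(P^\delta_\mu)$ and those feasible points of $(P^\delta)$ that vanish on $Z_\delta$, and to observe that this correspondence preserves both the objective functional and the constraints. The key input is Proposition \ref{prop:0}, which guarantees that \emph{every} feasible point of $(P^\delta)$ already vanishes a.e. on $Z_\delta$; this is what makes restriction lose no mass, so that the two problems are genuinely equivalent.

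First I would record the elementary identities behind the correspondence. Since $\mu$ is Lebesgue measure restricted to $Z_\delta^c$, for any non-negative $f \in L^1(I^d,\mu)$ one has $f^\mu \in L^1(I^d)$ with $f^\mu \geq 0$ a.e., and, using the convention $0\log 0 = 0$, $\ci_d(f^\mu) = \ci^\mu(f)$ and $\ca(f^\mu) = \ca^\mu(f)$. Conversely, for a non-negative $g \in L^1(I^d)$ vanishing a.e. on $Z_\delta$, its restriction $g|_{Z_\delta^c}$ lies in $L^1(I^d,\mu)$ and satisfies $\ci^\mu(g|_{Z_\delta^c}) = \ci_d(g)$ and $\ca^\mu(g|_{Z_\delta^c}) = \ca(g)$; here the hypothesis $g=0$ on $Z_\delta$ is exactly what forces the two families of constraints to coincide, because $\ca_i(g)$ and $\ca^\mu_i(g|_{Z_\delta^c})$ differ only by an integral over $Z_\delta$ which then vanishes. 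The identity $\ci^\mu(c)=\ci_d(c)$ for feasible $c$ is already noted in the text before the statement.

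With these in hand the two implications are short. If $c$ solves $(P^\delta_\mu)$, then $c^\mu$ is feasible for $(P^\delta)$ with $\ci_d(c^\mu)=\ci^\mu(c)$; given any feasible $g$ for $(P^\delta)$, Proposition \ref{prop:0} gives $g=0$ a.e. on $Z_\delta$, so $g|_{Z_\delta^c}$ is feasible for $(P^\delta_\mu)$ with the same entropy, and optimality of $c$ yields $\ci_d(c^\mu)=\ci^\mu(c)\leq \ci^\mu(g|_{Z_\delta^c})=\ci_d(g)$, proving $c^\mu$ optimal. In the other direction, if $c$ solves $(P^\delta)$ then by Proposition \ref{prop:0} it vanishes a.e. on $Z_\delta$, hence coincides as an element of $L^1(I^d,\mu)$ with its own restriction and is feasible for $(P^\delta_\mu)$ with $\ci^\mu(c)=\ci_d(c)$; for any feasible $g$ of $(P^\delta_\mu)$ its extension $g^\mu$ is feasible for $(P^\delta)$ with equal entropy, and optimality of $c$ gives $\ci^\mu(c)=\ci_d(c)\leq \ci_d(g^\mu)=\ci^\mu(g)$, so $c$ is optimal for $(P^\delta_\mu)$.

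The one point that really has to be handled with care — and the only place the hypotheses are used nontrivially — is the preservation of the constraints $\ca(\cdot)=b^\delta$ under restriction: this is where Proposition \ref{prop:0} is indispensable, since without knowing that feasible densities are supported in $Z_\delta^c$ the mass sitting on $Z_\delta$ would break the equivalence. The remaining verifications ($L^1$ membership, non-negativity, and the bookkeeping with $0\log 0 = 0$) are routine.
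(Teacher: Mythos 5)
Your proof is correct and follows exactly the route the paper intends: the paper gives no written proof beyond ``Using Proposition \ref{prop:0}, we easily get the following Corollary,'' and your argument---the zero-extension/restriction correspondence preserving $\ca$, $\ci_d$, $\ci^\mu$ and feasibility, with Proposition \ref{prop:0} guaranteeing that every feasible point of $(P^\delta)$ is supported in $Z_\delta^c$---is precisely the ``easy'' verification being alluded to. Nothing is missing; your writeup simply makes the bookkeeping explicit.
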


\section{Proof of Theorem \ref{theo:spec}} \label{sec:proof-theo-spec}
\subsection{Form of the optimal solution}
Let $(\ca^\mu)^*: L^\infty (I)^{d+1} \rightarrow L^\infty (I^d,
\mu)$ be the adjoint of $\ca^\mu$.  
 We will use Theorem 2.9. from \cite{borwein1994entropy} on abstract
 entropy minimization, which we recall here, adapted to the context of
 $(P^{\delta}_\mu)$. 

\begin{theo}[Borwein, Lewis and Nussbaum] \label{bor_lew_nuss}
 Suppose there exists  $ c>0$ $\mu$-a.e. which is feasible for
 $(P^{\delta}_\mu)$. Then there exists a unique optimal solution,
 $ c ^*$, to $(P^{\delta}_\mu)$. Furthermore, we have
 $ c^*>0$ $\mu$-a.e. and there exists a sequence $(\lambda^n,
 n\in \N^*)$ of elements of $L^\infty (I)^{d+1}$ such that:
\begin{equation}
   \label{eq:cvA}
\int_{I^d}  c^*(x) \left| (\ca^\mu)^*(\lambda^n)(x)
  -\log( c^*(x)) \right| \; \mu(dx) 
\; \xrightarrow[n\rightarrow \infty ]{} \;0.
\end{equation}
\end{theo}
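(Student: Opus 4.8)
The plan is to read $(P^{\delta}_\mu)$ as a strictly convex program over an affine slice of the positive cone of $L^1(I^d,\mu)$, and to read \reff{eq:cvA} off the Fenchel--Rockafellar duality for the entropy functional. I would get existence and uniqueness by the direct method. The integrand $\varphi(t)=t\log t$ is strictly convex, so $\ci^\mu$ is strictly convex and uniqueness of a minimiser is automatic. Since $\mu$ is a finite measure and $\varphi$ is superlinear, the mass normalisation $\int c\,\mu(du)=1$ (forced by $\ca^\mu_1(c)(1)=b_1(1)=1$) together with a bound $\ci^\mu(c)\leq M$ controls $\int c\,(\log c)^+\,\mu(du)$; hence any minimising sequence is uniformly integrable and, by de la Vall\'ee-Poussin and Dunford--Pettis, relatively weakly compact in $L^1(I^d,\mu)$. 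Each $\ca^\mu_i$ is a bounded linear operator, hence weak-to-weak continuous, so the constraints $\ca^\mu(c)=b^\delta$ pass to a weak limit, and weak lower semicontinuity of the convex functional $\ci^\mu$ produces a minimiser $c^*$. The assumed strictly positive feasible $c$ guarantees the infimum is finite.

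To show $c^*>0$ $\mu$-a.e.\ I would perturb towards that strictly positive competitor $c$. By linearity of the constraints, $c_\varepsilon=(1-\varepsilon)c^*+\varepsilon c$ is feasible for every $\varepsilon\in[0,1]$. If $c^*$ vanished on a set $E\subset Z_\delta^c$ of positive $\mu$-measure, on which necessarily $c>0$, then the right derivative $\tfrac{d}{d\varepsilon}\ci^\mu(c_\varepsilon)\big|_{\varepsilon=0^+}$ would contain the term $\int_E (c-c^*)\,(1+\log c^*)\,\mu(du)$, which equals $-\infty$ since $c^*=0$ on $E$ forces $\varphi'(0^+)=-\infty$. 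Then $\ci^\mu(c_\varepsilon)<\ci^\mu(c^*)$ for small $\varepsilon$, contradicting optimality; hence $c^*>0$ $\mu$-a.e.

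For \reff{eq:cvA} I would invoke the duality underlying \cite{borwein1994entropy}. The conjugate of $t\mapsto t\log t$ is $s\mapsto\expp{s-1}$, so the dual of $(P^{\delta}_\mu)$ is to maximise $\langle b^\delta,\lambda\rangle-\int_{I^d}\expp{(\ca^\mu)^*(\lambda)-1}\,\mu(du)$ over $\lambda\in L^\infty(I)^{d+1}$, and the presence of a strictly positive feasible point is exactly the Slater-type condition forcing the duality gap to vanish. The formal optimality relation is $c^*=\expp{(\ca^\mu)^*(\lambda)-1}$, i.e.\ $\log c^*=(\ca^\mu)^*(\lambda)-1$. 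Taking a dual maximising sequence $(\lambda^n)$ and writing $y^n=(\ca^\mu)^*(\lambda^n)$, the vanishing gap says $\int_{I^d}\big(c^*\log c^*+\expp{y^n-1}-c^*\,y^n\big)\,\mu(du)\to0$; the integrand is a nonnegative Fenchel--Young slack that controls $c^*\,|y^n-\log c^*-1|$, so convergence of the gap forces $\int_{I^d} c^*\,|y^n-\log c^*-1|\,\mu(du)\to0$. Absorbing the additive constant $1$ into $\lambda^n$, which is possible because $(\ca^\mu)^*$ reproduces constants from the mass constraint, yields \reff{eq:cvA}.

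The main obstacle is exactly this last passage. Since $\ca^\mu$ has infinite-dimensional range $L^1(I)^{d+1}$ and $\log c^*$ need not be bounded---it may diverge where $c^*$ degenerates near $\partial I^d$---no $\lambda\in L^\infty(I)^{d+1}$ can satisfy $\log c^*=(\ca^\mu)^*(\lambda)$ exactly, which is why the conclusion must be the approximate, $c^*$-weighted statement \reff{eq:cvA} rather than a pointwise Euler--Lagrange identity. Establishing that strong duality holds under mere strict positivity of a feasible point, that a dual maximising sequence exists, and that the Fenchel--Young slack is controlled in the weighted $L^1(c^*\,\mu)$ norm is the delicate analytic heart of the argument, and is precisely the content of Theorem~2.9 of \cite{borwein1994entropy}.
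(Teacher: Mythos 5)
This statement is not proved in the paper at all: it is Theorem~2.9 of \cite{borwein1994entropy}, quoted and adapted to the notation of $(P^{\delta}_\mu)$ and used as a black box, so there is no internal proof to compare yours with. Judged on its own terms, your proposal handles the easier half correctly: the direct-method argument (entropy bound $\Rightarrow$ uniform integrability via de la Vall\'ee-Poussin, Dunford--Pettis compactness, weak closedness of the affine constraint set, weak lower semicontinuity plus strict convexity for uniqueness) and the positivity argument (one-sided derivative along $(1-\varepsilon)c^*+\varepsilon c$, with monotone difference quotients justifying the interchange of limit and integral, and $\varphi'(0^+)=-\infty$ on a putative zero set of $c^*$) are both standard and sound.

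The genuine gap sits exactly at the heart of the statement, the approximate dual representation \reff{eq:cvA}, and your treatment of it is circular: you reduce it to (i) vanishing duality gap, (ii) existence of a dual maximising sequence, and (iii) control of the Fenchel--Young slack in $L^1(c^*\mu)$, and then declare that establishing (i)--(iii) ``is precisely the content of Theorem~2.9 of \cite{borwein1994entropy}'' --- that is, you invoke the very theorem you are supposed to prove. Point (iii) is indeed elementary: writing $y^n=\log c^*+1+r^n$, the slack equals $c^*\left(\expp{r^n}-1-r^n\right)$, which dominates a constant times $c^*\min(|r^n|,(r^n)^2)$, and Cauchy--Schwarz finishes. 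But (i) and (ii) are not a routine ``Slater-type'' consequence of having a strictly positive feasible point: the positive cone of $L^1(I^d,\mu)$ has empty interior, the constraint operator has infinite-dimensional range $L^1(I)^{d+1}$, and the dual supremum is generically not attained --- which is precisely why the conclusion involves a sequence $(\lambda^n)$ rather than a single multiplier, and why Borwein, Lewis and Nussbaum need machinery specific to the Boltzmann--Shannon integrand. A smaller slip: $(\ca^\mu)^*$ cannot reproduce the constant $1$ exactly (the functions $\Lambda_i$ in \reff{eq:L*} are continuous and vanish at $1$, so $(\ca^\mu)^*(\lambda)$ vanishes at $x=\mathbf{1}$), so ``absorbing the additive constant into $\lambda^n$'' is only possible approximately in $L^1(c^*\mu)$, e.g.\ with $\lambda_1^n=n\ind_{[1-1/n,1]}$; harmless, but it needs saying.
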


We first compute
$(\ca^\mu)^*$. For $\lambda=(\lambda_i, 1\leq i \leq d+1) \in L^{\infty} (I)^{d+1} $ and $f\in
 L^\infty (I^d, \mu)$, we have:
\begin{align*}
\langle (\ca^\mu)^*(\lambda),f \rangle
&= \langle \lambda,\ca^\mu(f)  \rangle \\
&= \sum_{i=1}^d \int_I dr\, \lambda_i (r)\int_{I^d} f(x)\ind_{\{x_i\leq r\}} d \mu(x) 
+ \int_I dr\, \lambda_{d+1} (r)\int_{I^d} f(x)\ind_{\{\max(x) \leq r\}} d\mu(x) \\
&=\int_{I^d} d\mu(x) \, f(x)\left( \sum_{i=1}^d \Lambda_i(x_i) + \Lambda_{d+1}(\max(x))
\right),
\end{align*}
where we used  the definition of the adjoint
operator for the first equality, Fubini's theorem for the second, and the
following notation for
the third equality:
\[
\Lambda_i(x_i)= \int_I  \lambda_i(r)\ind_{\{r\geq x_i\}}\,  dr,
\quad\text{and}\quad
\Lambda_{d+1}(t)= \int_I  \lambda_{d+1}(r)\ind_{\{r\geq t\}}\,  dr. 
\]
Thus, we can set for $\lambda \in L^\infty(I)^{d+1}$ and $x \in I ^d$:
\begin{equation}
   \label{eq:L*}
(\ca^\mu)^*(\lambda)(x)=\sum_{i=1}^d \Lambda_i(x_i) + \Lambda_{d+1}(\max(x)).
 \end{equation} 
Now we are ready to prove that the optimal solution $c^*$ of $(P^\delta_\mu)$ is the product of 
measurable univariate functions.

\begin{lem}
 \label{lem:c*=ab}
 Let $\delta\in \cd_0$ such that $\Sigma_\delta=\{0,1\}$. 
 Suppose that there exists $c>0$ $\mu$-a.e.which is feasible for $(P^\delta_\mu)$. 
 Then there exist $a^*,b^*$ non-negative, measurable functions 
 defined on $I$ such that 
 \[
 c^*(u)=b^*(\max(u))\prod_{u_i \neq \max(u)} a^*(u_i) \quad \mu\text{-a.e.}
 \] with $a^*(s)=0$ if $ \delta'(s)=d$ and
 $b^*(s)=0$ if $\delta'(s)=0$. 
\end{lem}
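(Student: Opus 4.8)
The plan is to read off the multiplicative structure of $c^*$ from the explicit form of the adjoint $(\ca^\mu)^*$ computed in \eqref{eq:L*}, combined with the approximation property \eqref{eq:cvA} supplied by Theorem \ref{bor_lew_nuss}. First I would apply Theorem \ref{bor_lew_nuss}: the hypothesis yields a unique optimal $c^*$ with $c^*>0$ $\mu$-a.e.\ and a sequence $(\lambda^n)$ in $L^\infty(I)^{d+1}$ along which $(\ca^\mu)^*(\lambda^n)$ tends to $\log(c^*)$ in the weighted space $L^1(c^*\,\mu)$. Since $c^*>0$ $\mu$-a.e., the measures $c^*\,\mu$ and $\mu$ share the same null sets, so after passing to a subsequence we may assume, using \eqref{eq:L*},
\[
(\ca^\mu)^*(\lambda^n)(x) = \sum_{i=1}^d \Lambda^n_i(x_i) + \Lambda^n_{d+1}(\max(x)) \xrightarrow[n\to\infty]{} \log(c^*(x)) \quad \mu\text{-a.e.}
\]

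Next I would localize on the region $R_k=\{u\in I^d;\ u_i<u_k \text{ for all } i\neq k\}$ where the $k$-th coordinate is the strict maximum; the complementary ``tie'' set has Lebesgue, hence $\mu$, measure zero and may be discarded. On $R_d$ (the other cases being symmetric) we have $\max(x)=x_d$, so the approximants take the additively separable form $\sum_{i=1}^{d-1}\Lambda^n_i(x_i) + \bigl(\Lambda^n_d+\Lambda^n_{d+1}\bigr)(x_d)$, a sum of functions of one coordinate each. The heart of the argument is to show that a $\mu$-a.e.\ limit of such separable functions is itself separable on $R_d$; the difficulty is that the individual summands $\Lambda^n_i$ need not converge, only their sum does. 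I would resolve this with the reconstruction identity valid for any separable $G$, namely $G(x)=\sum_{i=1}^d G(\bar x_1,\dots,x_i,\dots,\bar x_d)-(d-1)G(\bar x)$ at a fixed base point $\bar x\in R_d$, together with a Fubini argument: since the convergence holds $\mu$-a.e.\ on $R_d$, for a suitable $\bar x$ the restricted sequences $x_i\mapsto (\ca^\mu)^*(\lambda^n)(\bar x_1,\dots,x_i,\dots,\bar x_d)$ converge for a.e.\ $x_i$, and passing to the limit in the identity exhibits $\log(c^*)$ as $\sum_{i=1}^{d-1}\varphi_i(x_i)+\varphi_d(x_d)$ on $R_d$ for measurable univariate $\varphi_i$. (Since varying the maximal coordinate must keep it maximal, one recovers $\varphi_d$ only above $\max_{i<d}\bar x_i$, so a countable family of base points with small off-maximal coordinates is used to cover all of $R_d$.) This separability extraction is the main obstacle; everything else is bookkeeping.

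Finally I would invoke the symmetry of $c^*$ from Proposition \ref{prop:sym}. Swapping two non-maximal coordinates $x_i,x_j$ leaves $c^*$ invariant on $R_d$, forcing each difference $\varphi_i-\varphi_j$ to be constant; absorbing these constants into the maximal factor, all non-maximal factors become a single function, so setting $a^*=\expp{\varphi}$ for this common exponent and $b^*=\expp{\varphi_d}$ gives $c^*(x)=b^*(x_d)\prod_{i<d}a^*(x_i)$ on $R_d$. Applying the transposition exchanging $d$ and $k$ shows the same $a^*,b^*$ represent $c^*$ on each $R_k$, whence $c^*(u)=b^*(\max(u))\prod_{u_i\neq\max(u)}a^*(u_i)$ $\mu$-a.e. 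The sign conventions then follow from the description of $Z_\delta$ and Proposition \ref{prop:0}: since any feasible density, and hence $c^*$ extended by $0$ off $Z_\delta^c$, vanishes on $Z_\delta$, I would set $a^*(s)=0$ when $\delta'(s)=d$ and $b^*(s)=0$ when $\delta'(s)=0$; with these definitions the product vanishes exactly on $Z_\delta$ and is positive on $Z_\delta^c=\supp\mu$, consistently with $c^*>0$ $\mu$-a.e.
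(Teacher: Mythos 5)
Your proposal is correct and reaches the same conclusion, but it replaces the paper's key technical device by a different, more self-contained one. Both arguments begin identically: Theorem \ref{bor_lew_nuss}, the adjoint formula \reff{eq:L*}, restriction (by symmetry of $c^*$) to the region where the last coordinate is maximal, and the final $Z_\delta$ conventions. The difference lies in the central step, namely that a limit of additively separable functions is separable. The paper keeps the $L^1(c^*\mu)$ convergence of \reff{eq:cvA}, forms the probability measure $P(dx)=c^*(x)\ind_{\triangle}(x)\mu(dx)/\int_\triangle c^*\,d\mu$, verifies via the equivalence \reff{eq:mes_equiv} and Fubini computations that $P$ is absolutely continuous with respect to the product of its marginals $P_1^j\otimes P_2^j$, and then invokes Proposition 2 of \cite{Ruschendorf1993369} once for each $j\leq d-1$ before combining the resulting splittings into \reff{eq:Gam_lim}. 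You instead pass to a subsequence to get $\mu$-a.e.\ convergence and run a direct argument: the reconstruction identity for separable functions, Fubini-generic base points, a countable family of base points to exhaust $R_d$, and symmetry to identify the off-maximal factors. This is a legitimate alternative that avoids the external citation, and your observation that the projected points stay in $Z_\delta^c$ whenever $x$ and $\bar x$ do (because $Z_\delta^c$ has a product-like structure on $R_d$) is exactly what makes it go through. Two details you gloss over deserve attention if you write this out in full: (i) the existence of good base points with $\bar x_d$ near $1$ and off-maximal coordinates near $0$ \emph{inside} $Z_\delta^c$ requires knowing that $\{t<\epsilon;\ \delta'(t)\neq d\}$ and $\{t>1-\epsilon;\ \delta'(t)\neq 0\}$ have positive measure --- precisely the facts ($\delta(\epsilon)<\epsilon<d\epsilon$ and $\delta(1)-\delta(1-\epsilon)\geq \epsilon$) that the paper uses in its absolute-continuity check, so the same structural input reappears in disguise; (ii) the representations obtained from different base points differ by additive constants and must be patched consistently across the countable cover, a standard but nontrivial piece of bookkeeping that the paper's route delegates to \cite{Ruschendorf1993369}. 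In short: the paper buys brevity at the price of a citation; your route buys self-containedness at the price of covering and patching arguments.
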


\begin{proof}
 
 According to Theorem \ref{bor_lew_nuss},
there exists a sequence $(\lambda^n, n\in \N^*)$ of elements of
$L^\infty (I)^{d+1}$ such that the optimal solution, say $c^*$,  satisfies
\reff{eq:cvA}. This implies, thanks to \reff{eq:L*},
 that there exist $d+1$ sequences $(\Lambda_i^n, n\in \N^*, 1 \leq i \leq d+1)$
 of elements of $L^\infty (I)$ such that the
 following convergence holds in $L^1(I^d, c^* \mu)$:
\begin{equation}
   \label{eq:cvln}
 \sum_{i=1}^d\Lambda^{n}_i(u_i)+\Lambda^n_{d+1}(\max(u)) 
\; \xrightarrow[n\rightarrow \infty ] \;
\log(c^*(u)).
\end{equation}

Arguing as in Proposition \ref{prop:sym} and since $Z_\delta^c$ is 
symmetric, we deduce that $c^*$ is symmetric. Therefore we shall
only consider functions supported on the set 
$\triangle=\{u \in I^d; u_d=\max(u)\}$. The convergence 
\reff{eq:cvln} holds in $L^1(\triangle, c^*\mu)$. For simplicity, 
we introduce the functions $\Gamma_i^n \in L^\infty(I)$ defined by
$\Gamma_i^n =\Lambda_i^n$ for $1 \leq i \leq d-1$, and 
$\Gamma_d^n=\Lambda_d^n+\Lambda_{d+1}^n$. Then we have in 
$L^1(\triangle, c^* \mu)$:
 \begin{equation}
   \label{eq:cvlntr}
 \sum_{i=1}^d \Gamma_i^n(u_i) 
\; \xrightarrow[n\rightarrow \infty ] \;
\log(c^*(u)).
\end{equation}
  We first assume that there exist 
$\Gamma$ and $\Gamma_d$ measurable functions defined on $I$ such that
 $\mu$-a.e. on $\triangle$: 
 \begin{equation}
  \label{eq:Gam_lim}
  \sum_{i=1}^{d-1} \Gamma(u_i) +\Gamma_{d}(u_d) = \log(c^*(u)).
  \end{equation}
  The symmetric property of $c^*(u)$ seen in Proposition \ref{prop:sym} implies 
we can choose $\Gamma_i= \Gamma$ for $1\leq i \leq d-1$ up to adding a constant
to $\Gamma_d$. Set $a^*=\exp(\Gamma)$ and
$b^*=\exp(\Gamma_d)$ so that $\mu$-a.e. on $\triangle$:
\begin{equation}
   \label{eq:c=ab}
c^*(u)=b^*(u_d)\prod_{i=1}^{d-1} a^*(u_i).
\end{equation}
Recall $\mu(du)=\ind_{Z_\delta^c}  (u) \,
du$. From the definition \reff{eq:def-Z} of $Z_\delta$, we deduce that
without loss  of   generality,  we   can  assume   that   $a^*(u_i)=0$  if
$\delta'(u_i)=d$ and $b^*(u_d)=0$ if $\delta'(u_d)=0$.
Use the symmetry of $c^*$ to conclude.

To complete the proof, we now show that \reff{eq:Gam_lim} holds for $\Gamma$ and $\Gamma_d$ measurable functions.
 We introduce the notation
 $u_{(-i)}=(u_1,\hdots, u_{i-1},u_{i+1},\hdots, u_d) \in I^{d-1}$.
 Let us define the probability measure $P(dx)=c^*(x)\ind_{\triangle}(x)\mu(dx)/ \int_{\triangle} c^*(y)\mu(dy)$
 on $I^d$. We fix $j$, $1 \leq j \leq d-1$. In order to apply Proposition 2 of \cite{Ruschendorf1993369}, 
 we first check that $P$ is absolutely continuous with respect to $P^j_1 \otimes P^j_2$,
 where $P^j_1(du_{(-j)})= \int_{u_j \in I} P(du_{(-j)} du_j)$ and $P^j_2(du_j)= \int_{u_{(-j)} \in I^{d-1}} P(du_{(-j)} du_j)$
 are the marginals of $P$. Notice the following equivalence of measures:
 \begin{equation}
 \label{eq:mes_equiv}
  P(du) \sim \ind_{\triangle}(u) \prod_{i=1}^{d-1} \ind_{\{ \delta'(u_i) \neq d\}}\ind_{\{ \delta'(u_d) \neq 0\}}\, du.
 \end{equation}
Let $B \subset I^{d-1}$ be measurable. We have:
\[ 
  P_1(B) = 0 \Longleftrightarrow \int_{I^d} \ind_{\triangle}(u) 
  \prod_{i=1}^{d-1} \ind_{\{ \delta'(u_i) \neq d\}}\ind_{\{ \delta'(u_d) \neq 0\}} \ind_B(u_{(-j)}) \, du = 0.
\]
 By Fubini's theorem this last equiality is equivalent to:
\begin{equation}
 \label{eq:mes_fubini_P1}
 \int_{I^{d-1}} \prod_{i=1,i\neq j}^{d-1} \left(\ind_{\{ \delta'(u_i) \neq d\}} \ind_{\{u_i \leq u_d \}} \right)
 \ind_{\{ \delta'(u_d) \neq 0\}} 
 \ind_B(u_{(-j)}) \left( \int_I \ind_{\{ 0 \leq u_j \leq u_d\}}\ind_{\{ \delta'(u_j) \neq d\}}\, du_j \right)\, du_{(-j)} = 0.
\end{equation}
Since, for $\varepsilon > 0$, $\delta(\varepsilon) <  \epsilon < d\varepsilon $, we have 
$\int_I \ind_{\{ 0 \leq u_j \leq s\}}\ind_{\{ \delta'(u_j) \neq d\}} \, du_j > 0$ for all $s \in I$.
Therefore \eqref{eq:mes_fubini_P1} is equivalent to 
\[
 \int_{I^{d-1}} \prod_{i=1,i\neq j}^{d-1} \left(\ind_{\{ \delta'(u_i) \neq d\}} \ind_{\{u_i \leq u_d \}} \right)
 \ind_{\{ \delta'(u_d) \neq 0\}} 
 \ind_B(u_{(-j)}) \, du_{(-j)} = 0.
\]
This implies that there exists $h>0$ a.e. on $I^{d-1}$ such that 
\[
P^j_1(du_{(-j)})=h(u_{(-j)})\prod_{i=1,i\neq j}^{d-1} \left(\ind_{\{ \delta'(u_i) \neq d\}} \ind_{\{u_i \leq u_d \}} \right)
 \ind_{\{ \delta'(u_d) \neq 0\}}  du_{(-j)}.
\]
Similarly we have for $B' \subset I$ that $P^j_2(B')=0$ if and only if
\begin{equation}
 \label{eq:mes_fubini_P2}
 \int_I \ind_{\{ \delta'(u_j) \neq d\}} 
 \ind_{B'}(u_j) \left( \int_{I^{d-1}} \prod_{i=1,i\neq j}^{d-1} 
 \left(\ind_{\{ \delta'(u_i) \neq d\}} \ind_{\{u_i \leq u_d \}} \right)
 \ind_{\{ \delta'(u_d) \neq 0\}} \, \ind_{\{u_d \geq u_j\}} du_{(-j)}  \right)\, du_j = 0.
\end{equation}
Since, for $\epsilon > 0$, $\delta(1)-\delta(1-\epsilon) > 1- (1-\epsilon) = \epsilon >0$ ,
there exists $g>0$ a.e. on $I$ such that $P^j_2(du_j)=g(u_j)\ind_{\{\delta'(u_j) \neq d \}} du_j$.
Therefore by \eqref{eq:mes_equiv} we deduce that $P$ is absolutely continuous with respect to $P^j_1 \otimes P^j_2$.
Then according to Proposition 2 of \cite{Ruschendorf1993369}, \eqref{eq:cvlntr} implies that
there exist measurable functions $\Phi_j $ and
$\Gamma_j $ defined respectively on $I^{d-1}$ and $I$, such that $c^*\mu$-a.e. on $\triangle$:
\[
\log(c^*(u))=\Phi_j(u_{(-j)})+\Gamma_j(u_j). 
\]
As    ${\mu}$-a.e. $c^*>0$,  this equality   holds $\mu$-a.e. on $\triangle$.
Since we have such a representation for every $1 \leq j \leq d-1$, we can easily
verify that there exists a measurable function $\Gamma_d$ defined on $I$
such that $\log(c^*(u))=\sum_{i=1}^d \Gamma_i(u_i)$ $\mu$-a.e. on $\triangle$.

\end{proof}

\subsection{Calculation of the optimal solution}
Now we prove that the optimal solution to $(P^{\delta})$, 
if it exists, is indeed $c_{\delta}$. 

\begin{prop}
   \label{prop:c-opt}
Let $\delta\in \cd_0$ such that $\Sigma_\delta=\{0,1\}$. 
If there exists an optimal solution to $(P^{\delta})$, then it
is  $c_\delta$ given by \reff{eq:cd_def}. 
\end{prop}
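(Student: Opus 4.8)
The plan is to start from the product representation of the optimal solution provided by Lemma \ref{lem:c*=ab} and then pin down its two unknown factors by feeding the constraints $\ca(c^*)=b^\delta$ into a first-order ODE. Since an optimal solution to $(P^\delta)$ is assumed to exist, it vanishes on $Z_\delta$ by Proposition \ref{prop:0}, so by the Corollary relating $(P^\delta)$ and $(P^\delta_\mu)$ it coincides with the unique optimal solution $c^*$ of $(P^\delta_\mu)$. To apply Lemma \ref{lem:c*=ab} we need a strictly positive $\mu$-a.e.\ feasible point; I would take $c_\delta$ from Proposition \ref{lem:cd=density}, which is $>0$ exactly on $Z_\delta^c$ and is feasible whenever $\cj(\delta)<+\infty$ (if $\cj(\delta)=+\infty$ there is no feasible point and nothing to prove). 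Lemma \ref{lem:c*=ab} then yields measurable $a^*,b^*\geq 0$ with $c^*(u)=b^*(\max(u))\prod_{u_i\neq\max(u)}a^*(u_i)$ $\mu$-a.e.; set $A^*(r)=\int_0^r a^*(t)\,dt$.

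Next I would translate the constraints into scalar identities, exactly as in Section \ref{sec:S=01} but with the unknowns $a^*,b^*$. Splitting each integral according to which coordinate realizes the maximum, the diagonal constraint $\ca_{d+1}(c^*)=\delta$ gives $d\int_r^1 b^*(s)A^*(s)^{d-1}\,ds=1-\delta(r)$, while the marginal constraint $\ca_i(c^*)=\mathrm{id}_I$, written as $\int_{I^d}c^*\ind_{\{u_i\geq r\}}=1-r$, gives
\[
d\int_r^1 b^*(s)A^*(s)^{d-1}\,ds-(d-1)A^*(r)\int_r^1 b^*(s)A^*(s)^{d-2}\,ds=1-r.
\]
Subtracting yields $(d-1)A^*(r)\int_r^1 b^*(s)A^*(s)^{d-2}\,ds=r-\delta(r)=h(r)$, and differentiating the diagonal identity gives $d\,b^*(r)A^*(r)^{d-1}=\delta'(r)$ a.e., which expresses $b^*$ through $A^*$.

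Then I would differentiate the relation $(d-1)A^*(r)\int_r^1 b^*(s)A^*(s)^{d-2}\,ds=h(r)$, writing $G(r)=\int_r^1 b^*(s)A^*(s)^{d-2}\,ds$ and using both $(d-1)G(r)=h(r)/A^*(r)$ and $(d-1)A^*(r)G'(r)=-(d-1)\delta'(r)/d$ (the latter from $d\,b^*(A^*)^{d-1}=\delta'$). Eliminating $b^*$ and $G$, this collapses to the separable linear equation
\[
\frac{(A^*)'(r)}{A^*(r)}=\frac{d-\delta'(r)}{d\,h(r)}=\frac{d-1}{d}\,\frac{1}{h(r)}+\frac{1}{d}\,\frac{h'(r)}{h(r)},
\]
valid on $(0,1)$, where $h>0$ because $\Sigma_\delta=\{0,1\}$ and $A^*(r)>0$ because $c^*>0$ $\mu$-a.e.\ forces $a^*>0$ on $\{\delta'<d\}$, a set meeting every $(0,r)$ in positive measure (since $\delta(\varepsilon)<\varepsilon$ precludes $\delta'=d$ a.e.\ near $0$). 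Integrating from $1/2$ and comparing with \reff{eq:A-B} gives $A^*(r)=\kappa\,h^{1/d}(r)\,\expp{F(r)}=\kappa\,A(r)$ with $\kappa=A^*(1/2)/A(1/2)>0$; hence $a^*=\kappa a$, and from $d\,b^*(A^*)^{d-1}=\delta'$ together with \reff{eq:a-b}, $b^*=\kappa^{-(d-1)}b$.

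Finally, substituting into the product form, the $d-1$ factors of $a^*$ contribute $\kappa^{d-1}$, cancelling the $\kappa^{-(d-1)}$ carried by $b^*$, so that $c^*(u)=b(\max(u))\prod_{u_i\neq\max(u)}a(u_i)=c_\delta(u)$ as in \reff{eq:cd_def}; thus the one-parameter scaling indeterminacy inherent in the decomposition of Lemma \ref{lem:c*=ab} is immaterial. The step I expect to be the main obstacle is the rigorous passage from the integral identities to their a.e.\ differential forms and, relatedly, the verification that $A^*>0$ throughout $(0,1)$ so the quotients defining the ODE are meaningful; both hinge on $h>0$ on $(0,1)$ and on the strict positivity of the optimal solution inherited from the Borwein--Lewis--Nussbaum theorem.
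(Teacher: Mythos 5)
Your computational core is correct and is essentially the paper's own argument. From the product form supplied by Lemma \ref{lem:c*=ab} you derive exactly the paper's identities: the diagonal constraint gives $d\int_r^1 b^*(s)(A^*(s))^{d-1}ds=1-\delta(r)$ and, a.e., $d\,b^*(r)(A^*(r))^{d-1}=\delta'(r)$ (the paper's \reff{eq:C1}), while subtracting it from the marginal constraint gives $(d-1)A^*(r)\int_r^1 b^*(s)(A^*(s))^{d-2}ds=h(r)$ (the paper's \reff{eq:C2}). The only difference is cosmetic: the paper divides \reff{eq:C1} by \reff{eq:C2} and integrates the resulting logarithmic derivatives, whereas you differentiate \reff{eq:C2} and eliminate $b^*$ to get $(A^*)'/A^*=(d-\delta')/(dh)$; these are the same computation read in opposite directions, both landing on $A^*=\kappa A$, $b^*=\kappa^{-(d-1)}b$, with the constants cancelling in the product, exactly as in the paper.

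The genuine gap is your treatment of the hypothesis of Lemma \ref{lem:c*=ab}. You rightly observe that the lemma (via the Borwein--Lewis--Nussbaum theorem) needs a $\mu$-a.e.\ positive feasible point, and $c_\delta$ serves when $\cj(\delta)<+\infty$; but you dispose of the other case by asserting that ``if $\cj(\delta)=+\infty$ there is no feasible point and nothing to prove.'' That assertion is not available here: it is precisely part a) of Theorem \ref{theo:spec}, and the paper proves it \emph{from} Proposition \ref{prop:c-opt} (if an optimal solution existed it would be $c_\delta$, which is infeasible when $\cj(\delta)=+\infty$ by Remark \ref{lem:finiteKL}, hence by Proposition \ref{prop:sym} there is no feasible point at all). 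Using it inside the proof of Proposition \ref{prop:c-opt} makes the paper's overall argument circular, and as a standalone proof it rests on a substantial unproven claim --- note that nothing proved up to this point excludes the possibility that some copula with diagonal $\delta$ has finite entropy even though $\cj(\delta)=+\infty$. Nor is the case vacuous by inspection: when $\cj(\delta)=+\infty$, $c_\delta$ is infeasible, so the proposition in that case is \emph{equivalent} to the statement that no optimal solution exists, which is exactly what must be established. The paper sidesteps this by invoking Lemma \ref{lem:c*=ab} under the sole assumption that an optimal (hence feasible, but not necessarily positive) solution exists, so that the product-form conclusion covers both cases uniformly; your more literal reading of the lemma's hypothesis exposes this point, but to repair your case split you would need an independent argument that feasibility forces $\cj(\delta)<+\infty$ (e.g.\ a Gibbs-inequality comparison of an arbitrary feasible $f$ with $c_\delta$, which is delicate precisely because the relevant integrals degenerate when $\cj(\delta)=+\infty$).
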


\begin{proof}
In Lemma \ref{lem:c*=ab} we have already shown that if an optimal solution exists
for $(P^{\delta})$, then it is of the form $c^*(u)=b^*(\max(u))\prod_{u_i \neq \max(u)} a^*(u_i)$. 
Here we will prove that the constraints of $(P^{\delta})$ uniquely 
determine the functions $a^*$ and $b^*$ up to a multiplicative constant, giving $c^*=c_\delta$.
We set for $r\in I$:
\[
A^*(r)=\int_0^r a^*(s)\,ds 
\]
which take values in $[0,+\infty ]$. 
From $\ca_{d+1}(c^*)=b^\delta_{d+1}$, we have for $r \in I$:
\begin{align}\label{eq:A^(d-1)del}
 \nonumber \delta(r) &= \int_{I^d} c^*(u)\ind_{\{\max(u)\leq r\}} \, du \\
 \nonumber &= \int_{I^d} b^*(\max(u)) \prod_{u_i \neq \max(u)}a^*(u_i)  \ind_{ \{\max(u) \leq r \} }  \, du \\
  &= d \int_{I} (A^*(s))^{d-1} b^*(s) \ind_{ \{s \leq r \} }\, ds.
\end{align}
Taking the derivative with respect to $r$ gives a.e. on $I$: 
 \begin{equation}\label{eq:C1}
  \delta'(r) = d(A^*(r))^{d-1} b^*(r).
 \end{equation}
This implies that $A^*(r)$ is
finite for  all $r\in [0,1)$  and thus $A^*(0)=0$.  Similarly,  using that
$\ca_1(c^*)=b^\delta_1$, we get that for $r\in I$:
\begin{align*}
  1-r & = \int_{I^d} c^*(u)\ind_{\{u_1\geq r\}} \, du \\
      & = \int_{I^d} b^*(\max(u)) \prod_{u_i \neq \max(u)} a^*(u_i)   \ind_{ \{u_1 \geq r \} }\, du\\
      & = \int_{I^d} \prod_{i=2}^{d} \left(a^*(u_i) \ind_{\{u_i \leq u_1\}} \right) 
              b^*(u_1) \ind_{ \{u_1 \geq r \} }\, du\\
      & \hspace{1.5cm} + (d-1) \int_{I^d} a^*(u_1) \prod_{i=3}^{d} \left(a^*(u_i) \ind_{\{u_i \leq u_2\}} \right) 
              b^*(u_2) \ind_{ \{u_2 \geq u_1 \geq r \} }\, du\\
      & = \int_I (A^*(s))^{d-1} b^*(s) \ind_{ \{s \geq r \} }\, ds \\
      & \hspace{1.5cm} + (d-1) \int_I (A^*(s))^{d-2} b^*(s)(A^*(s)-A^*(r)) \ind_{ \{s \leq r \} }\, ds \\
      & = d \int_I (A^*(s))^{d-1} b^*(s) \ind_{ \{s \geq r \} }\, ds - (d-1) A^*(r)\int_I (A^*(s))^{d-2}b^*(s) \ind_{ \{s \geq r \} }\, ds.
\end{align*}
Using this and \reff{eq:A^(d-1)del} we deduce that for $r\in I$:
\begin{equation}
 \label{eq:C2}
 h(r) = (d-1) A^*(r) \int_I (A^*(s))^{d-2}b^*(s) \ind_{ \{s \geq r \} }\, ds.
\end{equation}
Since $r>\delta(r)$ on $(0,1)$, we have that $A^*$ and $\int_I (A^*(s))^{d-2} b^*(s) 
\ind_{ \{s \geq r \} }\, ds$ are positive
on $(0,1)$. Dividing \reff{eq:C1} by \reff{eq:C2} gives a.e. for $r \in I$: 
\begin{equation*} 
 \frac{d-1}{d} \frac{\delta'(r)}{h(r)}= 
 \frac{(A^*(r))^{d-2} b(r)}{\int_I (A^*(r))^{d-2}b^*(s) \ind_{\{r \leq s \leq 1\}} \, ds} \cdot
\end{equation*}
We integrate both sides to get for $r \in I$:
\begin{equation*}
 \frac{d-1}{d}\left( \log\left(\frac{h(r)}{h(1/2)}\right) 
 -\int_{1/2}^r \frac{1}{h(s)} \, ds \right)= 
 \log \left(\frac{\int_I (A^*(s))^{d-2} b^*(s) \ind_{\{r \leq s \leq 1\}} \, ds}
 {\int_I (A^*(s))^{d-2} b^*(s) \ind_{\{1/2 \leq s \leq 1\}} \, ds} \right).
\end{equation*}
Taking the exponential yields:
\begin{equation} \label{eq:Aab}
 \alpha h^{(d-1)/d}(r) \expp{-F(r) } = 
 \int_I (A^*(s))^{d-2} b^*(s) \ind_{\{r \leq s \leq 1\}} \, ds,
\end{equation} 
 for some positive constant $\alpha$. From \reff{eq:C2} and \reff{eq:Aab}, we derive:
 \begin{equation}\label{eq:A(r)}
 A^*(r) = \frac{1}{\alpha(d-1)} h^{1/d} (r)\expp{F(r) }.
 \end{equation}
 This proves that the function $A^*$ is uniquely determined up to a 
 multiplicative constant and so is $a^*$. 
 With the help of \reff{eq:C1} and \reff{eq:A(r)}, we can express $b^*$ as, for $r \in I$:
 \begin{equation}
  b^*(r)=\frac{\delta'(r)(\alpha(d-1))^{d-1}}{d} 
 \expp{-(d-1)F(r)}.
 \end{equation}
  The function $b^*$ is also uniquely determined up to a multiplicative
constant. Therefore \reff{eq:C1} implies that there is a unique $c^*$ 
of the form \reff{eq:c=ab} which solves $\ca(c)=b^\delta$.
(Notice however that the functions $a^*$ and $b^*$ are defined up to a multiplicative
constant.) Then according to Proposition \ref{lem:cd=density} we get that $c_\delta$ defined 
by \reff{eq:c=ab} with $a$ and $b$ defined by
\reff{eq:a-b} solves  $\ca(c)=b^\delta$, implying
that $c^*$ is equal to  $c_\delta$. 
\end{proof}

\subsection{Proof of Theorem \ref{theo:spec}}
Let $\delta \in \cd_0  $ such that $\Sigma_{\delta}=\{0,1\}$.  Thanks to
Proposition  \ref{prop:c-opt},  we
deduce that if there exists an optimal solution to  $(P^\delta)$ then it is
$c_\delta$ given by \reff{eq:c=ab}.  By construction, we have $\mu$-a.e.
$c_\delta>0$.   According  to  Corollary \ref{lem:finiteKL},  $c_\delta$  is
feasible  for  $(P^\delta)$ if  and  only  if  $\cj(\delta) <  +\infty$.
Therefore if  $\cj(\delta) < +\infty$,  then $c_\delta$ is  the optimal
solution. If  $\cj(\delta)=+\infty  $  then there  is  no  optimal
solution.

\section{Proof of Theorem \ref{theo:gen}}
\label{sec:proof-theo-gen}
We first state an elementary Lemma, whose proof if left to the reader. 
For $f$ a function defined on $I^d$ and   $0\leq s<t\leq 1$, we define
$f^{s,t}$ by,  for $u \in I^d$:
\[
f^{s,t}(u)=(t-s) f(s \mathbf{1}+ u(t-s)).
\]
\begin{lem}
   \label{lem:scaling}
If $c$ is the density of a copula $C$ such that
$\delta_C(s)=s$ and $\delta_C(t)=t$ for some fixed 
$0 \leq s < t \leq 1$, then $c^{s,t}$ is also the density
of a copula, and its diagonal section, $\delta^{s,t}$, is given by, for
$r\in I$:
\[
\delta^{s,t}(r)= \frac{\delta_C(s+ r(t-s))- s}{t-s} \cdot
\]
\end{lem}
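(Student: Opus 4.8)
The plan is to reinterpret the analytic hypotheses probabilistically and exploit a ``band'' structure of the support of $C$. Let $U=(U_1,\dots,U_d)$ be a random vector with copula $C$, so that $c$ is its density and $\delta_C(a)=\P(\max(U)\le a)$. The engine of the proof is the following elementary observation: if $\delta_C(a)=a$ for some $a\in I$, then since $\{\max(U)\le a\}\subseteq\{U_i\le a\}$ and $\P(\max(U)\le a)=a=\P(U_i\le a)$ for every $i$, the two events differ by a $\P$-null set. Hence, a.s., $U_i\le a$ implies $\max(U)\le a$ for each $i$, i.e. $U$ a.s. avoids the region $\{u:\min(u)\le a<\max(u)\}$. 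I would apply this at $a=s$ and at $a=t$ and intersect, concluding that $U$ lies a.s. in $[0,s]^d\cup(s,t]^d\cup(t,1]^d$, and that the middle band carries mass $\P(s<\max(U)\le t)=\delta_C(t)-\delta_C(s)=t-s$.

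Next I would introduce $W=(U-s\mathbf{1})/(t-s)$ and condition on the middle band $\{U\in(s,t]^d\}$. A change of variables (Jacobian $(t-s)^d$) together with the mass $t-s$ of the conditioning event identifies the density of $W$ under this conditional law with $c^{s,t}$, which is manifestly non-negative and integrates to $1$. The substantive point is that $c^{s,t}$ has uniform marginals. For this I would show that, conditionally on the middle band, each $U_i$ is uniform on $[s,t]$: for $a\in(s,t)$ the band structure forces $\{s<U_i\le a\}$ to be contained (up to a null set) in the middle band, so $\P(U_i\le a,\,U\in(s,t]^d)=\P(s<U_i\le a)=a-s$, whence $\P(U_i\le a\mid U\in(s,t]^d)=(a-s)/(t-s)$. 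Thus $W_i$ is uniform on $I$, and $c^{s,t}$ is indeed a copula density.

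Finally, the diagonal section is immediate from the same conditioning: since $\max(W)\le r$ is equivalent to $\max(U)\le s+r(t-s)$, and $s+r(t-s)\in[s,t]$, the band structure gives
\[
\delta^{s,t}(r)=\P\big(\max(U)\le s+r(t-s)\mid U\in(s,t]^d\big)=\frac{\delta_C(s+r(t-s))-\delta_C(s)}{t-s}=\frac{\delta_C(s+r(t-s))-s}{t-s},
\]
using $\delta_C(s)=s$.

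The main obstacle is not any single computation but the careful handling of the support/band structure: turning the two equalities $\delta_C(s)=s$, $\delta_C(t)=t$ into the a.s. statement that $U$ lives in the three diagonal bands, and then using exactly this to upgrade the easy ``integrates to $1$'' to the genuinely needed ``uniform marginals''. Measure-zero boundaries (the distinction between $(s,t]^d$ and $[s,t]^d$) must be tracked but are harmless in the absolutely continuous setting. A purely density-level rephrasing is possible---$c$ is supported a.e. on the three bands, so $\int_{[s,t]^{d-1}}c(v)\,dv_{(-i)}$ reduces to the full marginal $f_{U_i}\equiv 1$---but the probabilistic formulation makes the role of each hypothesis most transparent.
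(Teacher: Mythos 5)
Your probabilistic strategy is the natural one --- the paper explicitly leaves this proof to the reader, and the band-structure argument you give is surely the intended route. Its core steps are correct and well executed: from $\delta_C(a)=a$ you rightly deduce that $\{U_i\le a\}$ and $\{\max(U)\le a\}$ differ by a null set, hence $U$ lies a.s.\ in $[0,s]^d\cup(s,t]^d\cup(t,1]^d$; the middle band carries mass $\delta_C(t)-\delta_C(s)=t-s$; the conditional marginals on that band are uniform; and the formula for the diagonal section follows exactly as you write it.

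The gap is in the single identity you assert without checking: that the conditional density of $W=(U-s\mathbf{1})/(t-s)$ given $\{U\in(s,t]^d\}$ equals $c^{s,t}$. Your own bookkeeping (Jacobian $(t-s)^d$, conditioning mass $t-s$) gives
\[
f_W(u)=\frac{(t-s)^d\,c(s\mathbf{1}+u(t-s))}{t-s}=(t-s)^{d-1}\,c(s\mathbf{1}+u(t-s)),
\]
whereas the paper defines $c^{s,t}(u)=(t-s)\,c(s\mathbf{1}+u(t-s))$; these coincide only when $d=2$. In fact, for $d\ge 3$ the lemma as literally stated is false: by the band structure, $\int_{I^d}c^{s,t}(u)\,du=(t-s)^{1-d}\int_{[s,t]^d}c(v)\,dv=(t-s)^{2-d}$, which exceeds $1$ whenever $t-s<1$, so $c^{s,t}$ is then not even a probability density. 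What your argument actually proves is the corrected statement in which the factor $(t-s)$ in the definition of $f^{s,t}$ is replaced by $(t-s)^{d-1}$; this is a normalization slip in the paper itself (correspondingly, the factor $1/\Delta_j$ in \reff{eq:density-gen} should be $\Delta_j^{1-d}$), which your proof silently inherits. You should either restrict the claim to $d=2$, where everything you wrote is correct, or state and prove the corrected scaling; as written, the identification $f_W=c^{s,t}$ conceals a false identity for $d\ge 3$.
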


According  to Remark \ref{rem:abs-cont},  it is  enough to  consider the
case  $\delta\in \cd_0$,  that is  $\Sigma_\delta$ with zero Lebesgue measure.
We shall assume that $\Sigma_{\delta} \neq \{ 0,1 \}$.
Since $\delta$ is continuous,  we get that $I \setminus \Sigma_{\delta}$
can be  written as  the union of  non-empty open  intervals $((\alpha_j,
\beta_j),   j\in  J)$,   with   $\alpha_j<\beta_j$  and   $J$
non-empty and  at   most
countable. Set $\Delta_j=\beta_j-\alpha_j$. Since $\Sigma_\delta$ is of
zero Lebesgue measure, we have $\sum_{j\in J} \Delta_j=1$. 
We define also $S=\bigcup _{j\in J} [\alpha_j, \beta_j]^d$

For $s\in \Sigma_\delta$, notice that any feasible function $c$ of
$(P^\delta)$ satisfies for all $1\leq i \leq d$:
\begin{equation*}
\int_{I^d} c(u) \ind_{\{u_i < s\}} \ind_{D_i^c}(u)\, du  = \int_{I^d} c(u) \ind_{\{u_i < s\}} \, du
- \int_{I^d} c(u) \ind_{\{\max(u) < s\}} \, du = s-\delta(s)=0,
\end{equation*}
where $D_i=\{u \in I^d \text{ such that } \forall j \neq i :  u_{j} < s\}$.
This implies that $c=0$ a.e. on $I^d \setminus S$. 
We set $c^j=c^{\alpha_j,
  \beta_j} $ for $j\in J$. We 
deduce that if $c$ is feasible for $(P^\delta)$, then we have that a.e.:
\begin{equation}
   \label{eq:cj}
c(u)=\sum_{j\in J} \inv{\Delta_j} c^j\left(\frac{u-\alpha_j\mathbf{1}}{\Delta_j} 
\right) \,\ind_{(\alpha_j,\beta_j)^d}(u),
\end{equation}
and: 
\begin{equation}
   \label{eq:cidc}
\ci_d(c)=\sum_{j\in J} \Delta_j \left(\ci_d(c^j) - 
\log(\Delta_j)\right). 
\end{equation}
Thanks  to Lemma \ref{lem:scaling},  the condition  $\ca(c)=b^\delta$ is
equivalent to $\ca(c^j)=b^{\delta^j}$ for  all $j\in J$.  We deduce that
the  optimal  solution  of  $(P^\delta)$,  if it  exists,  is  given  by
\reff{eq:cj},  where the functions  $c^j$ are  the optimal  solutions of
$(P^{\delta^j})$   for   $j\in   J$.    Notice  that   by   construction
$\Sigma_{\delta^j}=\{0,1\}$.   Thanks  to  Theorem \ref{theo:spec},  the
optimal  solution to  $(P^{\delta^j})$ exists  if  and only  if we  have
$\cj(\delta^j)<+\infty  $;  and  if  it   exists  it  is  given  by  $c_
{\delta^j}$.  Therefore,   if  there  exists  an   optimal  solution  to
$(P^\delta)$, then  it is $c_\delta$ given  by \reff{eq:density-gen}. To
conclude,   we   have   to   compute  $\ci_d(c_\delta)$.   Recall   that
$x\log(x)\geq -1/\expp{}$ for $x>0$. We have:
\begin{align*}
\ci_d(c_\delta)
&=\lim_{\varepsilon\downarrow 0} \sum_{j\in J} \Delta_j \left(\ci_d(c^j) - 
\log(\Delta_j)\right)\ind_{\{\Delta_j>\varepsilon\}}\\
&=\lim_{\varepsilon\downarrow 0} \sum_{j\in J} \Delta_j \left((d-1)\cj(\delta^j) - 
\log(\Delta_j)\right)\ind_{\{\Delta_j>\varepsilon\}} +
\sum_{j\in J} \Delta_j  \cg(\delta^j) \\
&= \sum_{j\in J} \Delta_j \left((d-1)\cj(\delta^j) - 
\log(\Delta_j)\right)+ 
\sum_{j\in J} \Delta_j  \cg(\delta^j),
\end{align*}
where we used the monotone  convergence theorem for the first equality,
\reff{eq:ci=cj+g} for the second and the fact that $\cg(\delta)$
is uniformly bounded over  $\cd_0$ and the monotone convergence theorem
for the last.  Elementary computations yields:
\[
(d-1)\cj(\delta)=\sum_{j\in J} \Delta_j\left((d-1)\cj(\delta^j) -
  \log(\Delta_j)\right) 
\quad\text{and}\quad
\cg(\delta)= \sum_{j\in J} \Delta_j  \cg(\delta^j). 
\]
So, we get:
\[
\ci_d(c_\delta)=(d-1)\cj(\delta)+ \cg(\delta).
\]
Since $\cg(\delta)$ is
uniformly bounded over $\cd_0$, we get that $\ci_d(c_\delta)$ is finite if
and only if $\cj(\delta)$ is finite. 
To end the proof, recall  the definition of $\ci(C_\delta)$ to conclude that
$\ci(C_\delta)=(d-1)\cj(\delta)+ \cg(\delta)$.

   \section{Examples for $d=2$} \label{sec:examples}
   
   In this section we compute the density of the maximum entropy copula for various diagonal sections of 
   popular bivariate copula families. In this Section, $u$ and $v$ will denote elements of
   $I$. The density for $d=2$ is of the form $c_\delta(u,v)=a(\min(u,v))b(\max(u,v))$.
   We illustrate these densities by displaying their isodensity lines or contour plots, and their diagonal
   cross-section $\varphi$ defined as $\varphi(t)=c(t,t)$, $t \in I$. 
   
   \subsection{Maximum entropy copula for a piecewise linear diagonal section} \label{sec:ex-pie-lin}
   
   Let $\alpha\in (0, 1/2]$. Let us calculate the density of the maximum entropy copula in the case of 
   the following diagonal section:
   
\[
\delta(r)= (r-\alpha)\ind_{(\alpha, 1-\alpha)}(r)+ (2r-1)
\ind_{[1-\alpha, 1]}(r).
\]
   This example was considered for example in \cite{Nelsen2004348}. The limiting cases $\alpha=0$ and $\alpha=1/2$
   correspond to the Fr\'{e}chet-Hoeffding upper and lower bound copulas, respectively. 
   However for $\alpha=0$, $\Sigma_\delta=I$, therefore every copula $C$ with this diagonal section gives $\ci(C)=+ \infty$. (In fact
   the only copula that has this diagonal section is the Fr\'{e}chet-Hoeffding upper bound $M$ defined by $M(u,v)=
   \min(u,v)$, $u,v \in I$.)
   When $\alpha\in (0, 1/2]$, $\cj(\delta) < + \infty$ is satisfied, therefore we can apply Theorem \ref{theo:spec}
   to compute the density of the maximum entropy copula. The graph of $\delta$ can be seen in Figure~\ref{delta_a} for $\alpha=0.2$. We compute the functions 
   $F$,$a$ and $b$:
   
  \begin{figure} 
   
  \centering
  
   \begin{tikzpicture}[scale=2]
  
  \draw[->] (-1.5,-1.2) -- (1.5,-1.2)  node[right] {$t$}  ;
  \draw[->] (-1.2,-1.5) -- (-1.2,1.5)  node[above] {$\delta(t)$} ;
  \draw[very thick] (-1.2,-1.2) -- (-0.72,-1.2) node[below] {$\alpha$} ;
  \draw[thick] (-0.72,-1.2) -- (0.72,0.24)  ;
  \draw[thick] (0.72,0.24) -- (1.2,1.2) ;
  
  \draw (0.72,-1.15)--(0.72,-1.2) node[below] {$1-\alpha$};
  \draw[thin] (1.2,1.2) -- (1.2,-1.2) node[below] {$1$};
   \draw[thin] (1.2,1.2) -- (-1.2,1.2) node[left] {$1$};
  \draw[thin] (-1.2,-1.2) -- (1.2,1.2) ;
  
\end{tikzpicture}

\caption{Graph of $\delta$ with $\alpha=0.2$.}
\label{delta_a}
 \end{figure}
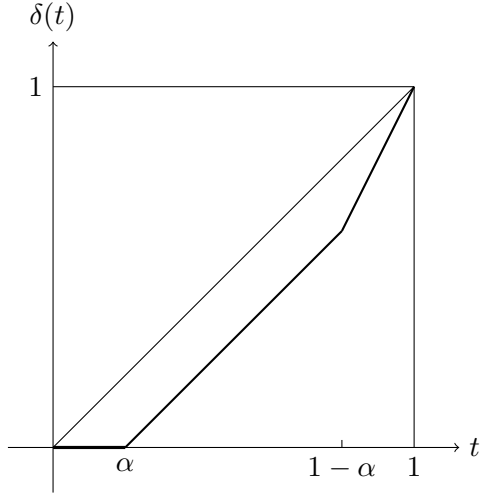 
   
   $$F(r) = \left\{ 
	\begin{array}{ll}
		\inv{2}\log(\frac{r}{\alpha}) -\frac{1}{4\alpha}+\inv{2} & \textrm{if } r \in [0,\alpha) \\
		\frac{r}{2\alpha}-\frac{1}{4\alpha} & \textrm{if } t \in [\alpha,1-\alpha)  \\
		\inv{2}\log \left( \frac{\alpha}{1-r} \right)+\frac{1}{4\alpha}-\inv{2}  & \textrm{if } t \in [1-\alpha,1]
	\end{array}
    \right.
    $$
   $$a(r)=\frac{1}{\sqrt{\alpha}}e^{-\frac{1}{4\alpha}+\frac{1}{2}}\ind_{[0,a]}(r) +  
   \frac{1}{2\sqrt{\alpha}}e^{\frac{r}{2\alpha}-\frac{1}{4\alpha}}\ind_{(a,1-a)}(r)    $$
   and: 
    $$b(r)=\frac{1}{2\sqrt{\alpha}}e^{-\frac{r}{2\alpha}+\frac{1}{4\alpha}}\ind_{(a,1-a)}(r) +
           \frac{1}{\sqrt{\alpha}}e^{-\frac{1}{4\alpha}+\frac{1}{2}}\ind_{[1-a,1]}(r) $$  
   The density $c_\delta(u,v)$ consists of six distinct regions on $\triangle=\{(u,v)\in I^2, u \leq v \}$ as shown
   in Figure~\ref{six_part}
   and takes the values:
  
  \begin{equation}
  \label{eq:ex_lin}
    c_\delta(u,v) = \left\{ 
	\begin{array}{ll}
		 0 & \textrm{in I,}   \\
		 \frac{1}{2\alpha}\expp{\frac{\alpha-v}{2\alpha}} & \textrm{in II,}   \\
		 \frac{1}{4\alpha}\expp{\frac{u-v}{2\alpha}} & \textrm{in III,}   \\
		 \frac{1}{\alpha}\expp{\frac{2\alpha-1}{2\alpha}}& \textrm{in IV,}   \\
		 \frac{1}{2\alpha} \expp{\frac{u+\alpha-1}{2\alpha}}  & \textrm{in V,}   \\
		 0 & \textrm{in VI.}   \\
		 
	\end{array}
    \right.
   \end{equation} 
   
   \begin{figure} 
   
   \centering
  
   \begin{subfigure}[b]{.45\linewidth}
   
   \centering
  
   \begin{tikzpicture}[scale=2]
  
   \draw[->] (-1.5,-1.2) -- (1.5,-1.2)    ;
   \draw[->] (-1.2,-1.5) -- (-1.2,1.5) ;

   \draw (-0.6,-1.15)--(-0.6,-1.2) node[below] {$\alpha$};
   \draw (0.6,-1.15)--(0.6,-1.2) node[below] {$1-\alpha$};

   \draw[thin] (1.2,1.2) -- (1.2,-1.2) node[below] {$1$};
   \draw[thin] (1.2,1.2) -- (-1.2,1.2) node[left] {$1$};
   \draw[thin] (-1.2,-1.2) -- (1.2,1.2) ;
   \draw[thin] (-1.2,-0.6) node[left] {$\alpha$} -- (-0.6,-0.6) ;
   \draw[thin] (-1.2,0.6) node[left] {$1-\alpha$} -- (0.6,0.6) ;
   \draw[thin] (-0.6,-0.6) -- (-0.6,1.2) ;
   \draw[thin] (0.6,0.6) -- (0.6,1.2) ;
   
   \draw (-1.0,-0.8) node { I.};
   \draw (-0.9,0) node { II.};
   \draw (-0.2,0.2) node { III.};
   \draw (-0.9,0.9) node { IV.};
   \draw (0.0,0.9) node { V.};
   \draw (0.8,1.0) node { VI.};
   
   \end{tikzpicture}
   \caption{Partition for $c_\delta$}
   \label{six_part} 
  
  \end{subfigure}
  \begin{subfigure}[b]{.45\linewidth}
       \centering
       \includegraphics[width=70mm]{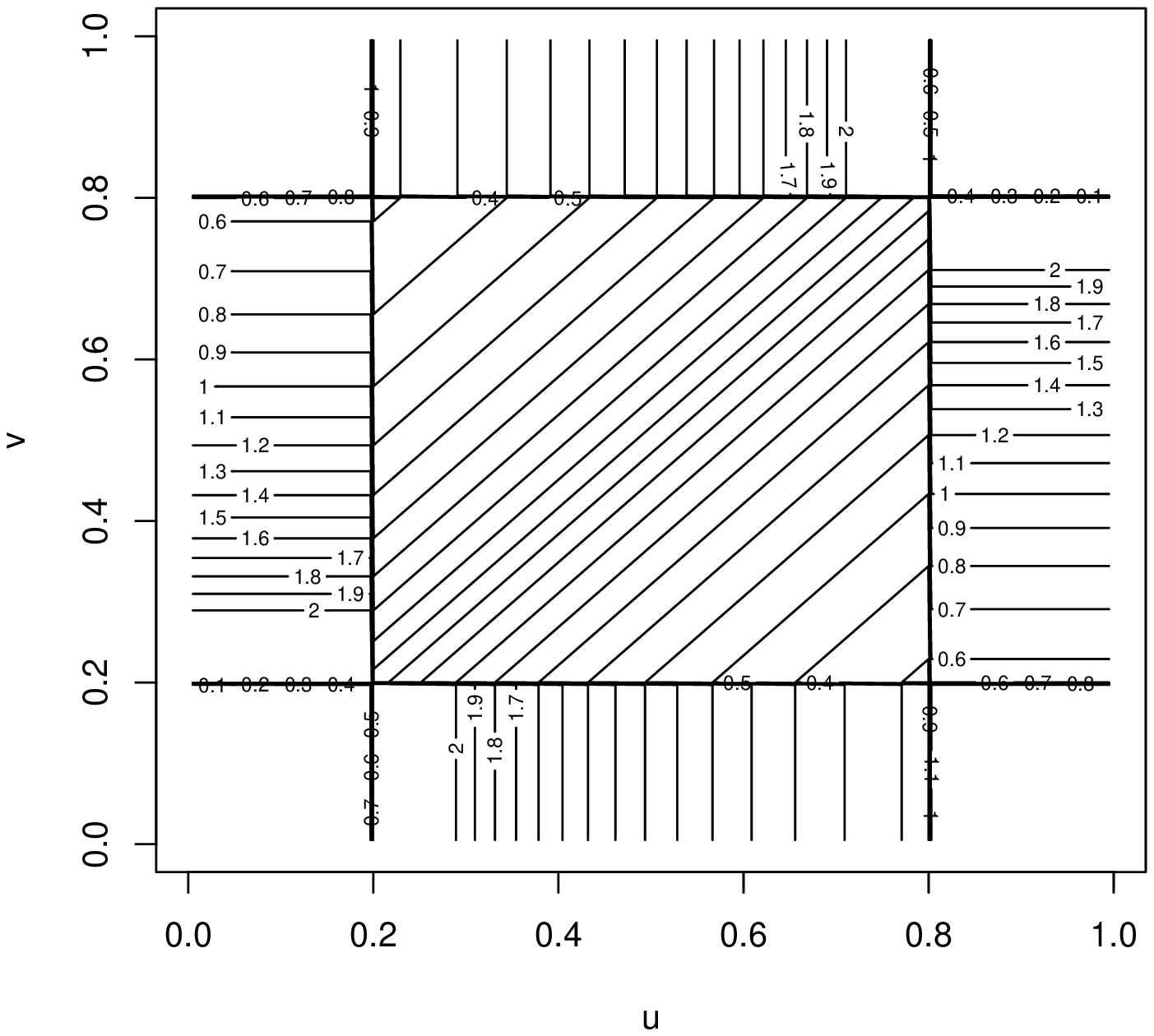}
       \caption{Isodensity lines of $c_\delta$}
       \label{SU_dens}     
  \end{subfigure}

     \caption{The partition and the isodensity lines of $c_\delta$.}
   \label{part_iso} 
  
   \end{figure} 

   Figure \ref{SU_dens} shows the isodensity lines of $c_\delta$. In the limiting case of $\alpha=\frac{1}{2}$, the diagonal section is given by $\delta(t)= \max(0,2t-1)$,which is the pointwise lower bound for all
   elements in $\mathcal{D}$. Accordingly, it is the diagonal section of the Fr\'{e}chet-Hoeffding lower bound copula $W$ given by
   $W(u,v)=\max(0,u+v-1)$ for $u,v \in I$. All copulas having this diagonal section are of the following form:
   
   $$D_{C_1,C_2}(u,v)=\left\{
	\begin{array}{ll}
		W(u,v)  &\textrm{if } (u,v) \in [0,1/2]^2 \cup [1/2,1]^2 , \\
		\frac{1}{2}C_1(2u,2v-1) & \textrm{if } (u,v) \in [0,1/2] \times [1/2,1],  \\
		\frac{1}{2}C_2(2u-1,2v) & \textrm{if } (u,v) \in   [1/2,1] \times [0,1/2] ,       
	\end{array}
\right.$$
where $C_1$ and $C_2$ are copula functions. Recall that the independent copula $\Pi$ with uniform
density $c_\Pi=1$ on $I^2$ minimizes $\ci(C)$ over $\cc$. According to \reff{eq:ex_lin}, the maximum
entropy copula with diagonal section $\delta$ is $D_{\Pi,\Pi}$. This corresponds to choosing
the maximum entropy copulas on $[0,1/2] \times [1/2,1] $ and $[1/2,1] \times [0,1/2]$.
    
\subsection{Maximum entropy copula for $\delta(t)=t^{\alpha}$ }
    Let $\alpha \in (1,2]$. We consider the family of diagonal sections given by $\delta(t)=t^{\alpha}$. This corresponds
    to the Gumbel family of copulas and also to the family of Cuadras-Aug\'{e} copulas. The Gumbel copula with parameter $\theta
    \in [1,\infty)$ is an Archimedean copula defined as, for $u,v \in I$: 
    $$C^G(u,v)= \phi^{-1}_{\theta}(\phi_{\theta}(u)+\phi_{\theta}(v))$$
    with generator function $\phi_{\theta}(t)= (-\log(t))^{\theta}$. Its diagonal section is given by $\delta^G(t)=t^{2^{\frac{1}{\theta}}}=t^{\alpha}$ with
    $\alpha=2^{\frac{1}{\theta}}$. The Cuadras-Aug\'{e} copula with parameter $\gamma \in (0,1)$ is defined as, for $u,v \in I$:
    $$C^{CA}(u,v)=\min(uv^{1-\gamma},u^{1-\gamma}v).$$ It is a subclass of the two parameter Marshall-Olkin family of copulas
    given by $C^M(u,v)=\min(u^{1-\gamma_1}v,uv^{1-\gamma_2})$. The diagonal section of $C^{CA}$ is given by $\delta(t)= t^{2-\gamma}=t^{\alpha}$ 
    with $\alpha=2-\gamma$. While the Gumbel copula is absolutely continuous, the Cuadras-Aug\'{e} copula is not, although it has full support.
    Since $\mathcal{J}(\delta)<+\infty$, we can apply Theorem \ref{theo:spec}.  
    To give the density of the maximum entropy copula, we have to calculate $F(v)-F(u)$. 
    Elementary computations yield:
    \[
       F(v)-F(u)
      =\inv{2}\int_u^v \frac{ds}{s-s^\alpha}
      =\inv{2}\log\left(\frac{v}{u}\right)
      - \inv{2\alpha-2} \log\left(\frac{1-v^{\alpha-1}}{1-u^{\alpha-1}}\right).
    \]
    The density $c_{\delta}$ is therefore given by, for $(u,v) \in \triangle$:
  \[
    c_\delta(u,v)=\frac{\alpha}{4}\,  \frac{2- \alpha
    u^{\alpha-1}}{(1-u^{\alpha-1})^{\alpha/(2\alpha-2)}} 
    \, v^{\alpha-2} (1-v^{\alpha-1})^{(2-\alpha)/(2\alpha-2)} .
  \]
   Figure \ref{pow_iso} represents the isodensity lines of the Gumbel and the maximum entropy copula $c_\delta$
   with common parameter $\alpha=2^{\frac{1}{3}}$, which corresponds to $\theta=3$ for the Gumbel copula. We have also added
   a graph of the diagonal cross-section of the two densities. 
   In the limiting case of $\alpha=2$, the above formula gives $c_\delta(u,v)=1$, which is the density of the 
   independent copula $\Pi$, which is also maximizes the entropy on the entire set of copulas.

   \begin{figure}[ht]
   
   \centering
   
   \begin{subfigure}[b]{.33\linewidth}
       \centering
       \includegraphics[width=50mm]{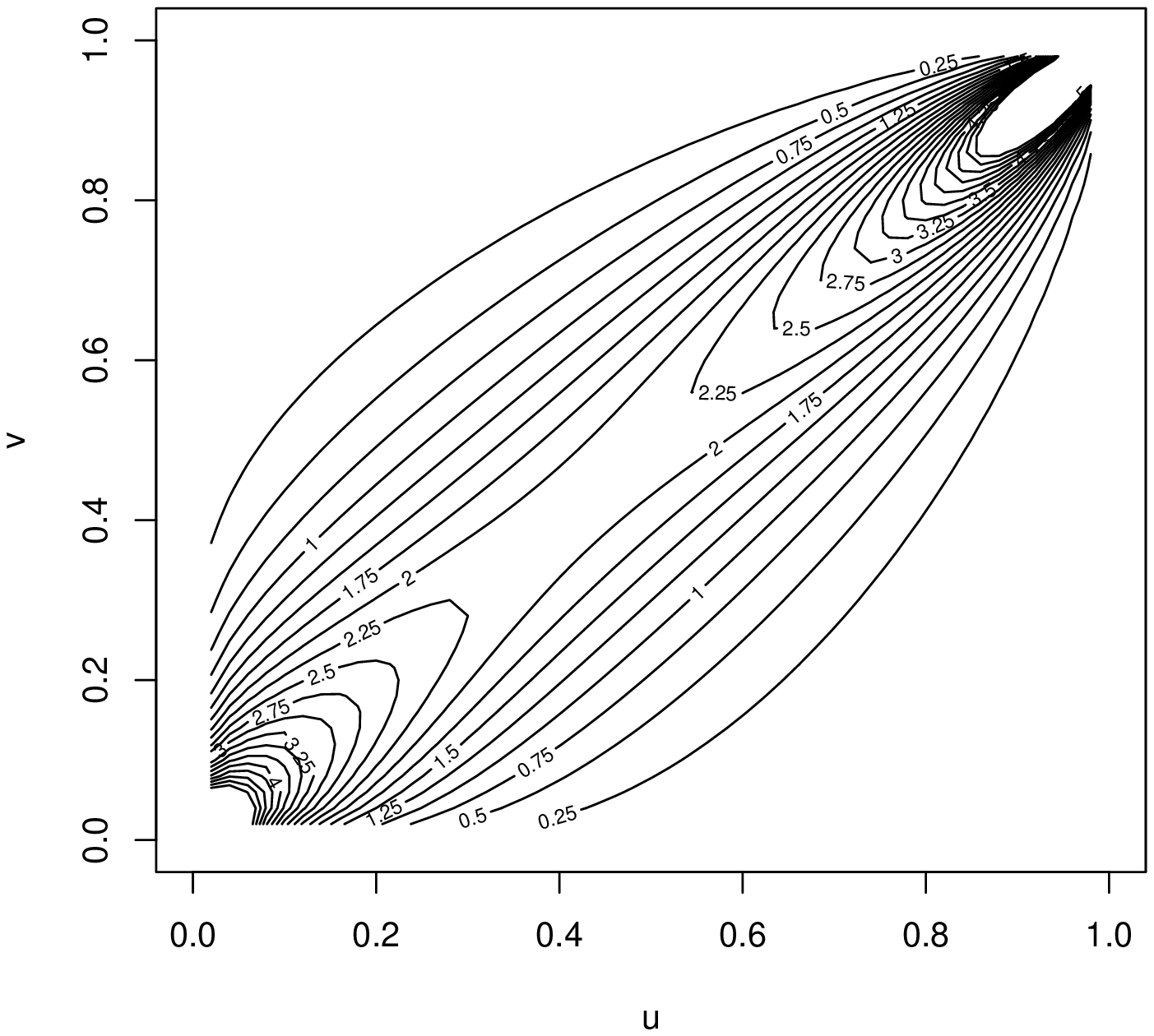}
       \caption{Gumbel}
       \label{power_gumbel}      
    \end{subfigure}%
     \begin{subfigure}[b]{.33\linewidth}
      \centering
      \includegraphics[width=50mm]{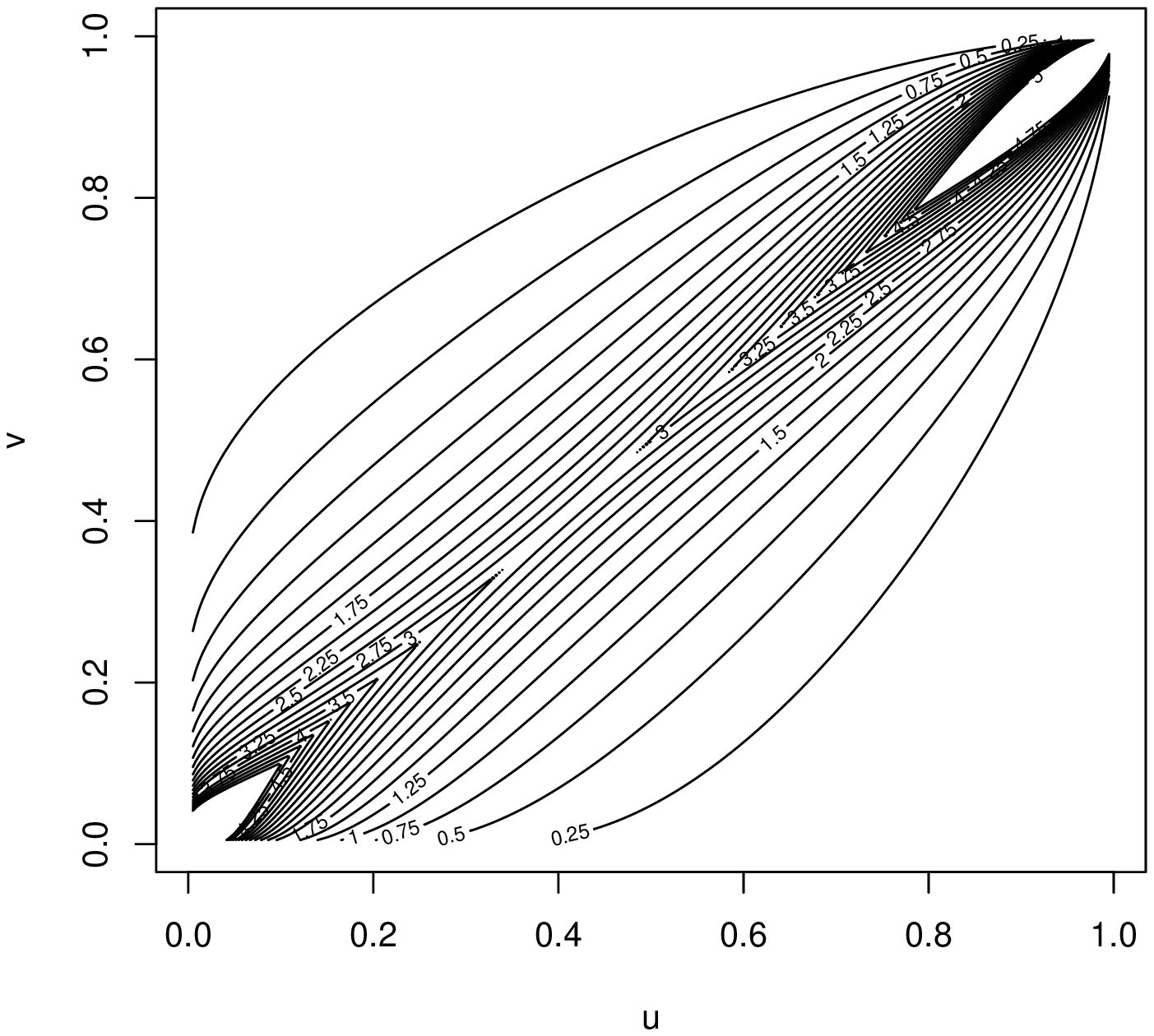}
      \caption{$c_\delta$}
      \label{power_max_ent}
   \end{subfigure}
    \begin{subfigure}[b]{.33\linewidth}
      \centering
      \includegraphics[width=50mm]{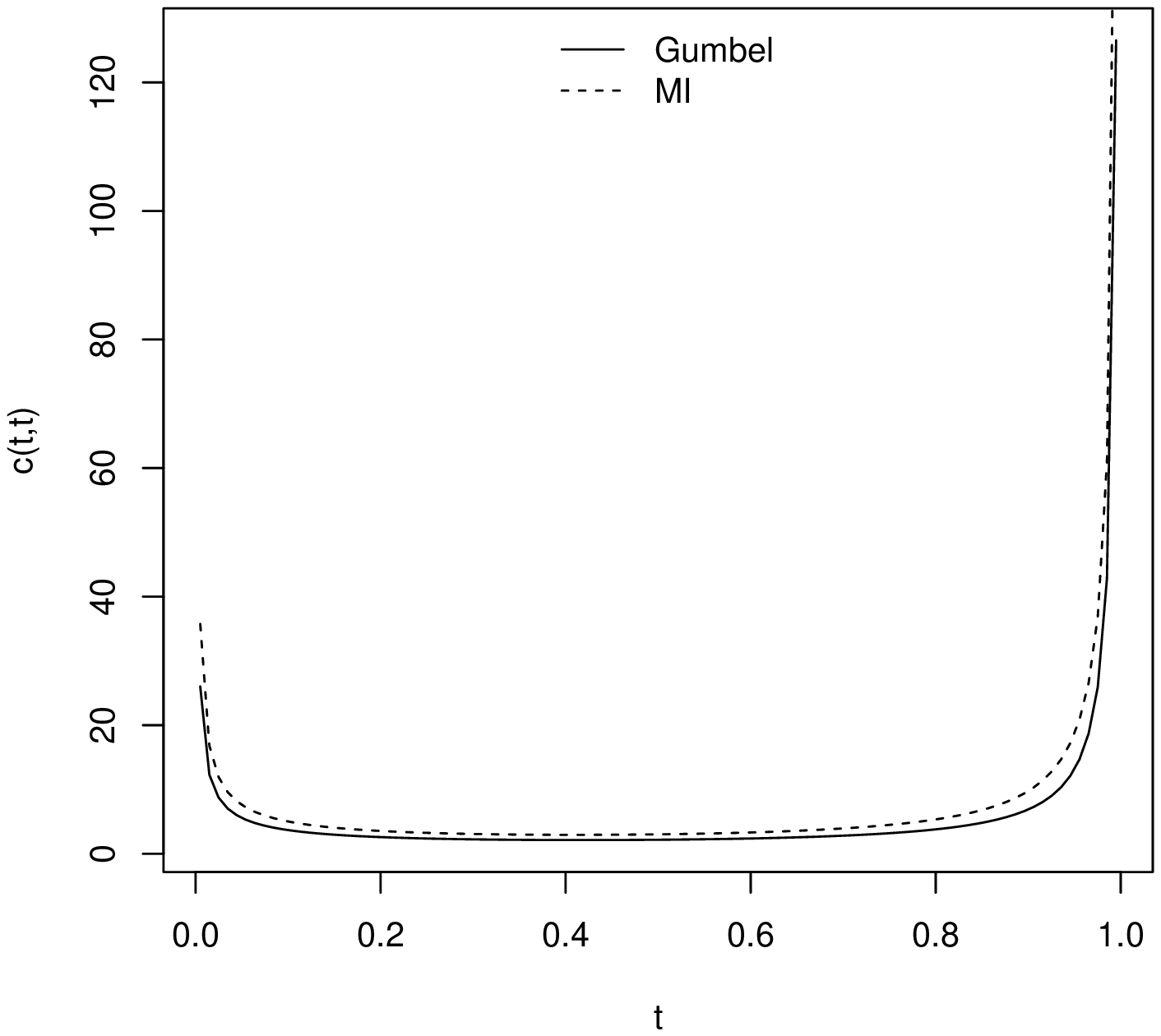}
      \caption{Diagonal cross-section}
      \label{power_dens_diag}
   \end{subfigure}
   \caption{Isodensity lines and the diagonal cross-section of copulas with diagonal section $\delta(t)=t^{\alpha}$, $\alpha=2^{\frac{1}{3}}$.}
   \label{pow_iso}  
   \end{figure}

 \subsection{Maximum entropy copula for the Farlie-Gumbel-Morgenstern diagonal section }
 
 Let $\theta \in [-1,1]$. The Farlie-Gumbel-Morgenstern family of copulas (FGM copulas for short) are defined as:
 $$C(u,v)=uv + \theta uv(1-u)(1-v) .$$
 These copulas are absolutely continuous with densities $c(u,v)=1+\theta(1-2u)(1-2v)$.
 Its diagonal section $\delta_{\theta}$ is given by:
 \begin{eqnarray*}
   \delta(t)&=&t^2+ \theta t^2(1-t)^2 \\
                     &=&\theta t^4-2\theta t^3+(1+\theta)t^2.
 \end{eqnarray*}
 
 Since $\delta_{\theta}(t) < t$ on $(0,1)$ and it verifies $\mathcal{J}(\delta) < + \infty $, we can apply
 Theorem \ref{theo:spec} to calculate the density of the maximum entropy copula. For $F(r)$, we have:
 
 \begin{eqnarray*}
  F(r)  =
 \left\{
	\begin{array}{ll}
		 \inv{2}\log\left(\frac{r}{1-r}\right)+ \frac{\theta}{\sqrt{4\theta-\theta^2}} \arctan \left( \frac{2\theta r - \theta}{\sqrt{4\theta-\theta^2}} \right)  &\textrm{if } \theta \in (0,1],  \\
                 \inv{2}\log\left(\frac{r}{1-r}\right) & \textrm{if } \theta=0,	\\
		\inv{2}\log\left(\frac{r}{1-r}\right)- \frac{\theta}{\sqrt{\theta^2-4\theta}} \arctanh \left( \frac{2\theta r - \theta}{\sqrt{\theta^2-4\theta}} \right) &\textrm{if } \theta \in [-1,0).         
	\end{array}
\right.
 \end{eqnarray*}
       
 The density $c_{\delta}$ is given by, for $\theta \in (0,1]$ and $(u,v) \in \triangle$:
 
   \begin{eqnarray*}
      c_{\delta}(u,v) &=&\frac{\left(1-2\theta u ^3+ 3 \theta u^2+(1+\theta) u\right)}{(1-u)\sqrt{\theta u^2 -\theta u+1}}
  \frac{\left(2\theta v^2+ 3 \theta v+(1+\theta)\right)}{\sqrt{\theta v^2-\theta v+1}} \\
     &&\hspace{0.5cm} \exp\left(-\frac{\theta}{\sqrt{4\theta-\theta^2}}\left(\arctan \left( \frac{2\theta v - \theta}{\sqrt{4\theta-\theta^2}} \right) 
   -\arctan \left( \frac{2\theta u - \theta}{\sqrt{4\theta-\theta^2}} \right)\right)\right)
   \end{eqnarray*} 

   Figure \ref{fgm_iso} illustrates the isodensities of the FGM copula and the maximum entropy copula 
   with the same diagonal section for $\theta=0.5$ as well as the diagonal cross-section of their densities.
 
    \begin{figure}[ht]
   
   \centering
   
   \begin{subfigure}[b]{.32\linewidth}
       \centering
       \includegraphics[width=50mm]{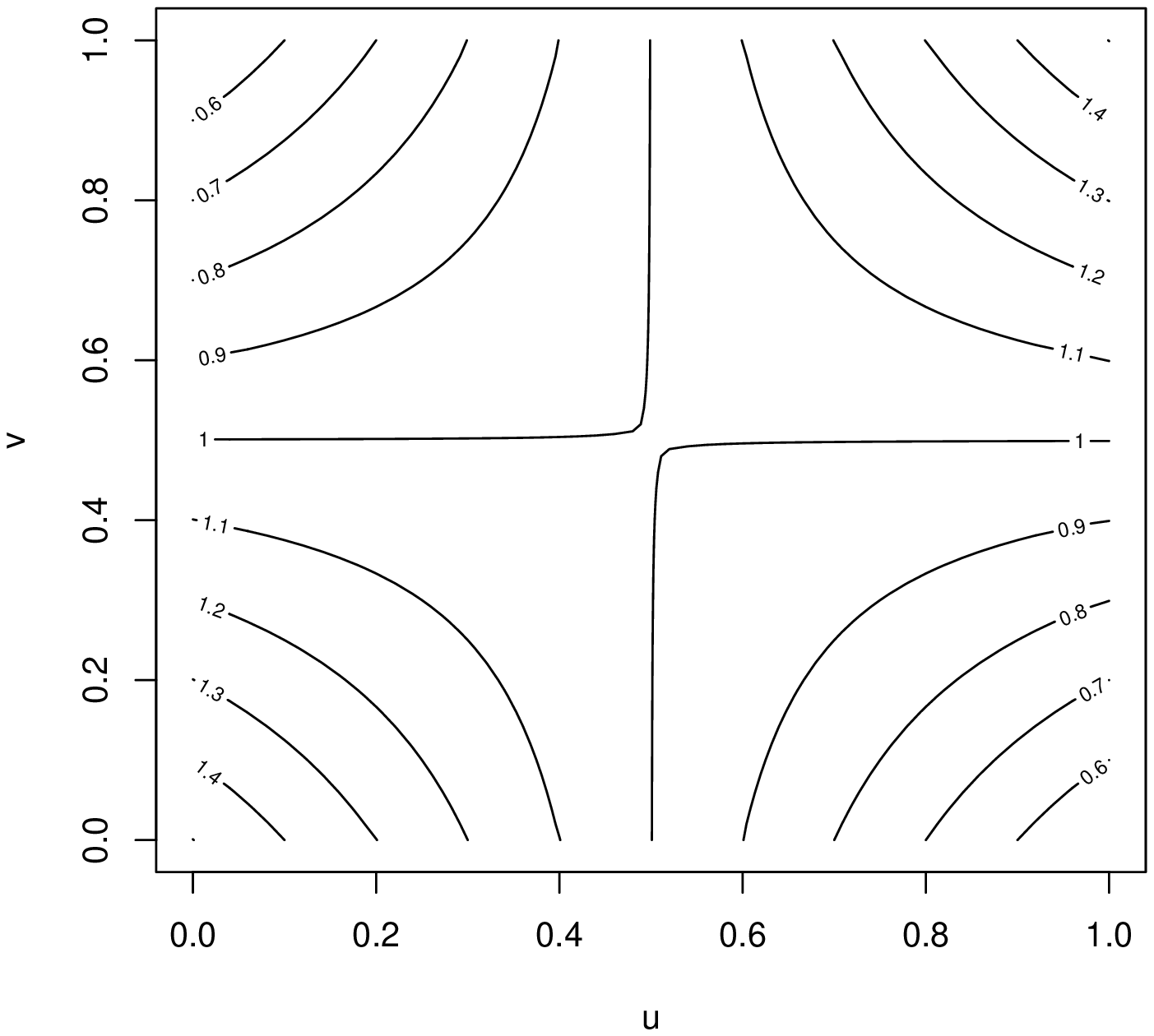}
       \caption{FGM}
       \label{FGM}      
    \end{subfigure}
     \begin{subfigure}[b]{.32\linewidth}
      \centering
      \includegraphics[width=50mm]{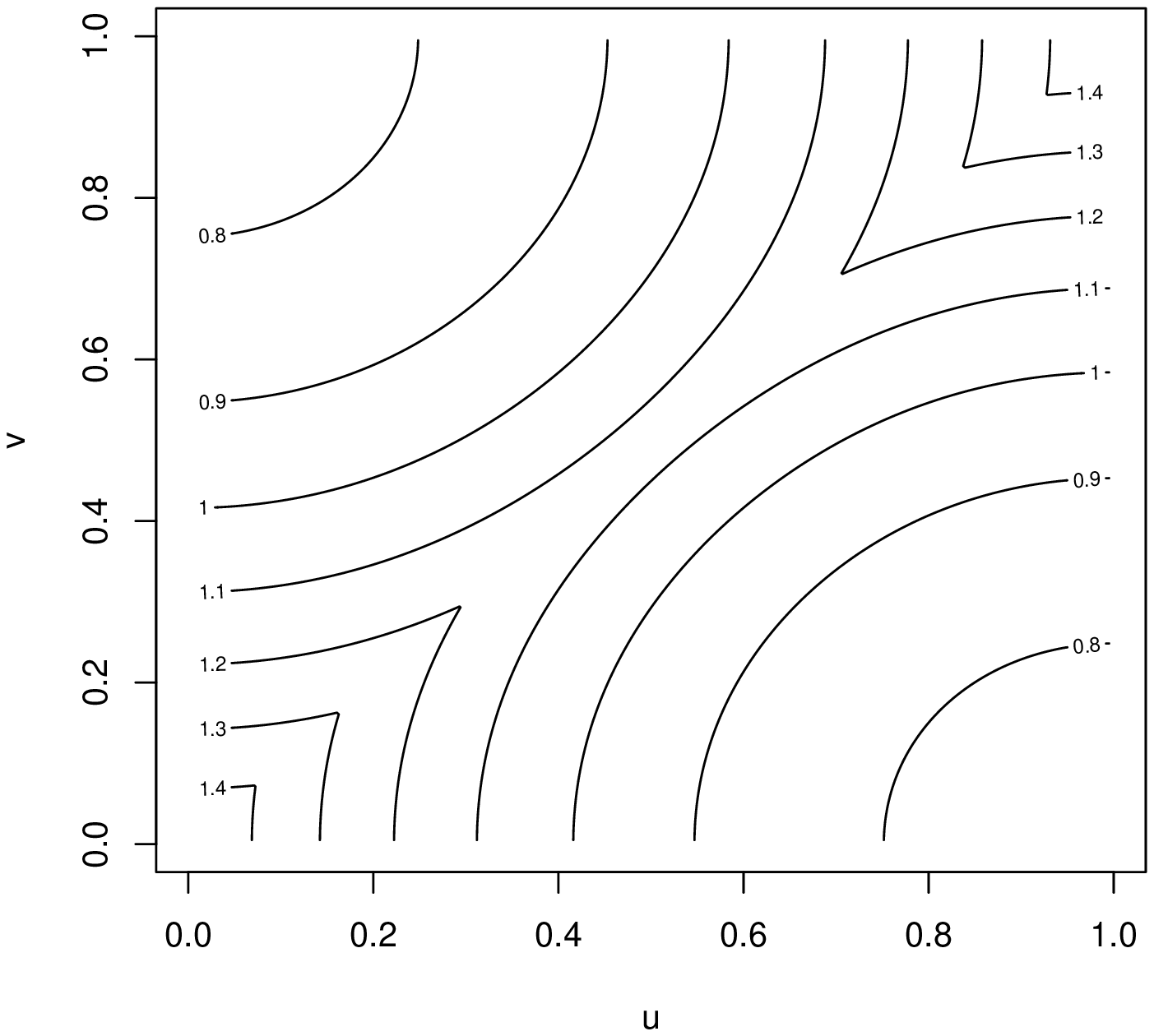}
      \caption{$c_\delta$}
      \label{FGM_max_ent}
   \end{subfigure}
   \begin{subfigure}[b]{.32\linewidth}
      \centering
      \includegraphics[width=50mm]{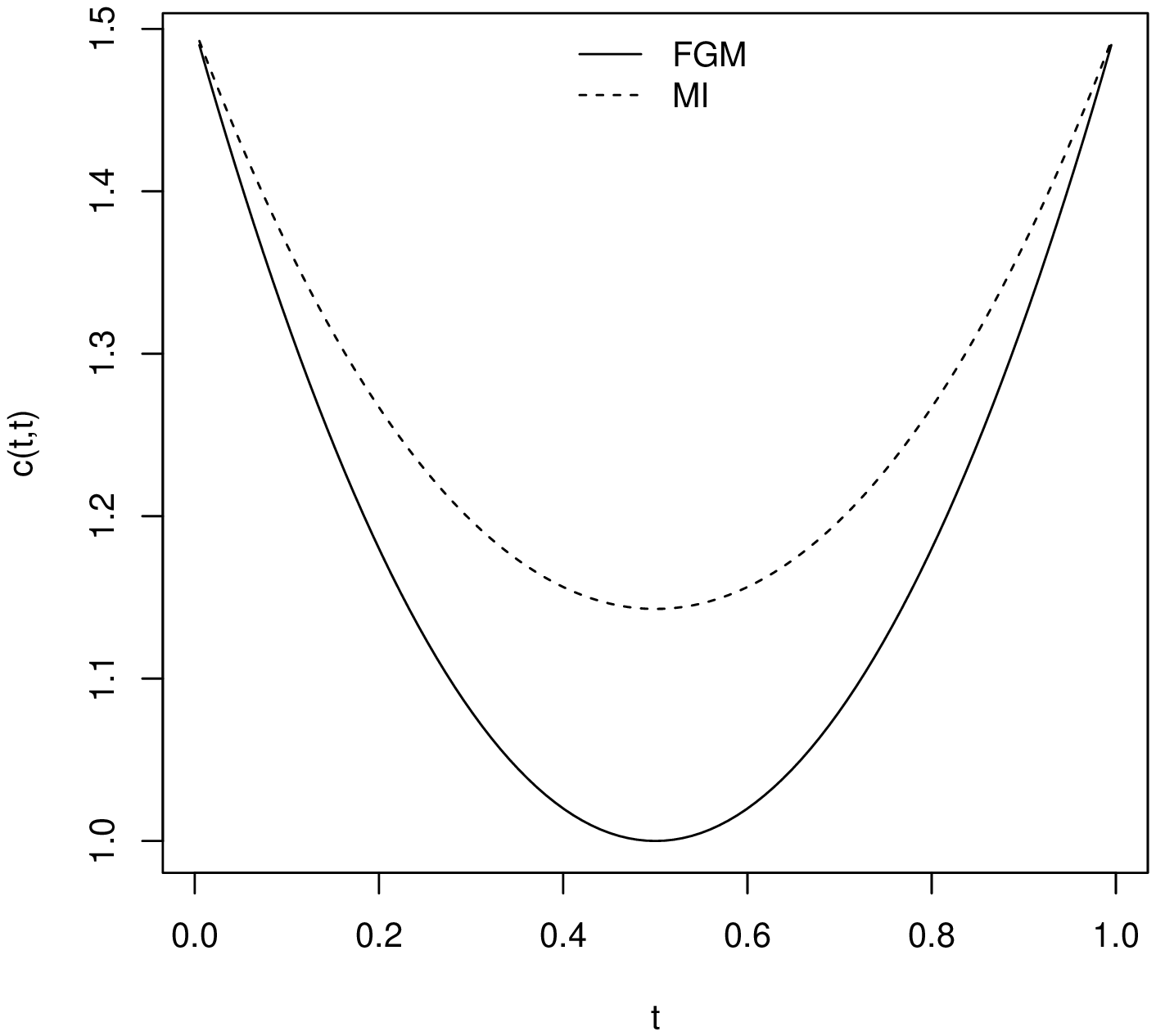}
      \caption{Diagonal cross-section}
      \label{FGM_dens_diag}
   \end{subfigure}
   \caption{Isodensity lines and the diagonal cross-section of copulas with diagonal section $\delta(t)=\theta t^4 -2\theta t^3+(1+\theta)t^2$, $\theta=0.5$.}
   \label{fgm_iso}  
   \end{figure} 
 
 The case of $\theta=0$ corresponds once again to the diagonal section $\delta(t)=t^2$, and the formula gives the density of the
 independent copula $\Pi$, accordingly.
 
 \subsection{Maximum entropy copula for the Gaussian diagonal section}
 
 The Gaussian (normal) copula takes the form:
 $$C_{\rho}(u,v)= \Phi_{\rho}\left( \Phi^{-1}(u),\Phi^{-1}(v) \right),$$
 with $\Phi_{\rho}$ the joint cumulative distribution function of a two-dimensional normal random variable
 with standard normal marginals and correlation parameter $\rho \in [-1,1]$, and $\Phi^{-1}$ the quantile function of the standard normal
 distribution. The density $c_{\rho}$ of $C_{\rho}$ can be written as:
 $$c_{\rho}(u,v)= \frac{\phi_{\rho}\left( \Phi^{-1}(u),\Phi^{-1}(v) \right)}{\phi(\Phi^{-1}(u))\phi(\Phi^{-1}(v))} ,$$
 where $\phi$ and $\phi_{\rho}$ stand for respectively the densities of a standard normal distribution and a two-dimensional normal distribution
 with correlation parameter $\rho$, respectively. The diagonal section and its derivative are given by:
 \begin{equation}
 \label{eq:delta_normal}
  \delta_{\rho}(t) = \Phi_{\rho}\left( \Phi^{-1}(t),\Phi^{-1}(t) \right), \ \delta_{\rho}'(t) = 2\Phi \left( \sqrt{\frac{1-\rho}{1+\rho} }\Phi^{-1}(t) \right).
 \end{equation}
 Since $\delta_\rho$ verifies $\delta_{\rho}(t) < t$ on $(0,1)$ and $\mathcal{J}(\delta_\rho) < + \infty $, we can apply
 Theorem \ref{theo:spec} to calculate the density of the maximum entropy copula. We have calculated numerically
 the density of the maximum entropy copula with diagonal section $\delta_{\rho}$ for 
 $\rho=0.95,0.5,-0.5$ and $-0.95$. The comparison between these densities and the densities of the corresponding normal 
 copula can be seen in Figures \ref{normal_iso},\ref{normal_iso_neg} and \ref{normal_neg_sample}. We observe a very different behaviour of $c_\rho$ and 
 $c_{\delta_\rho}$ in the case of $\rho < 0$. In the limiting case when $\rho$ goes down to $-1$, we retrieve the diagonal 
 $\delta(t)=\max(0,2t-1)$, which we have studied earlier in Section \ref{sec:ex-pie-lin}.
 
     \begin{figure}[ht]
   
   \centering
      \begin{subfigure}[b]{.32\linewidth}
       \centering
       \includegraphics[width=50mm]{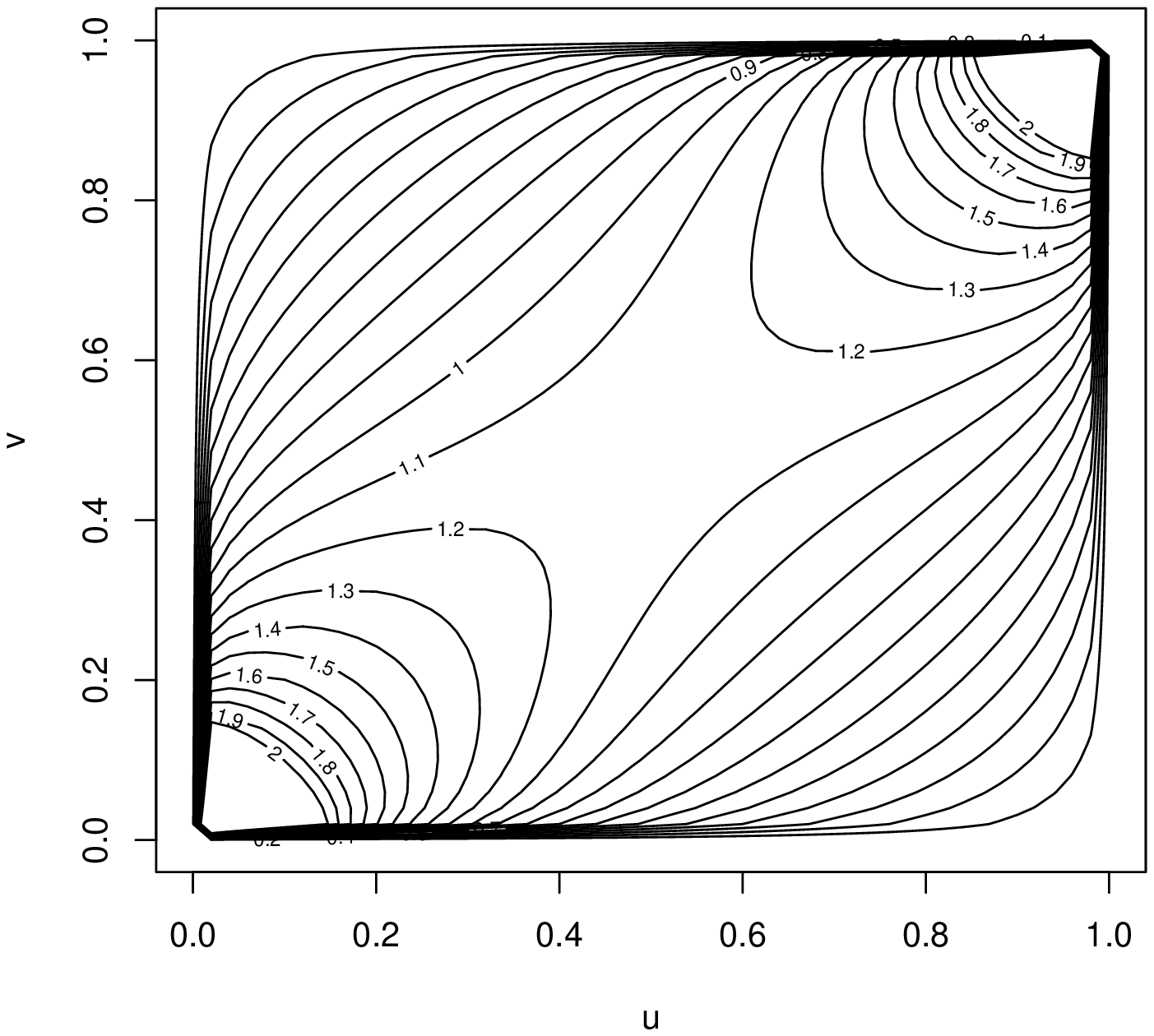}
       \caption{Normal, $\rho=0.5$}
       \label{normal}      
    \end{subfigure}
     \begin{subfigure}[b]{.32\linewidth}
      \centering
      \includegraphics[width=50mm]{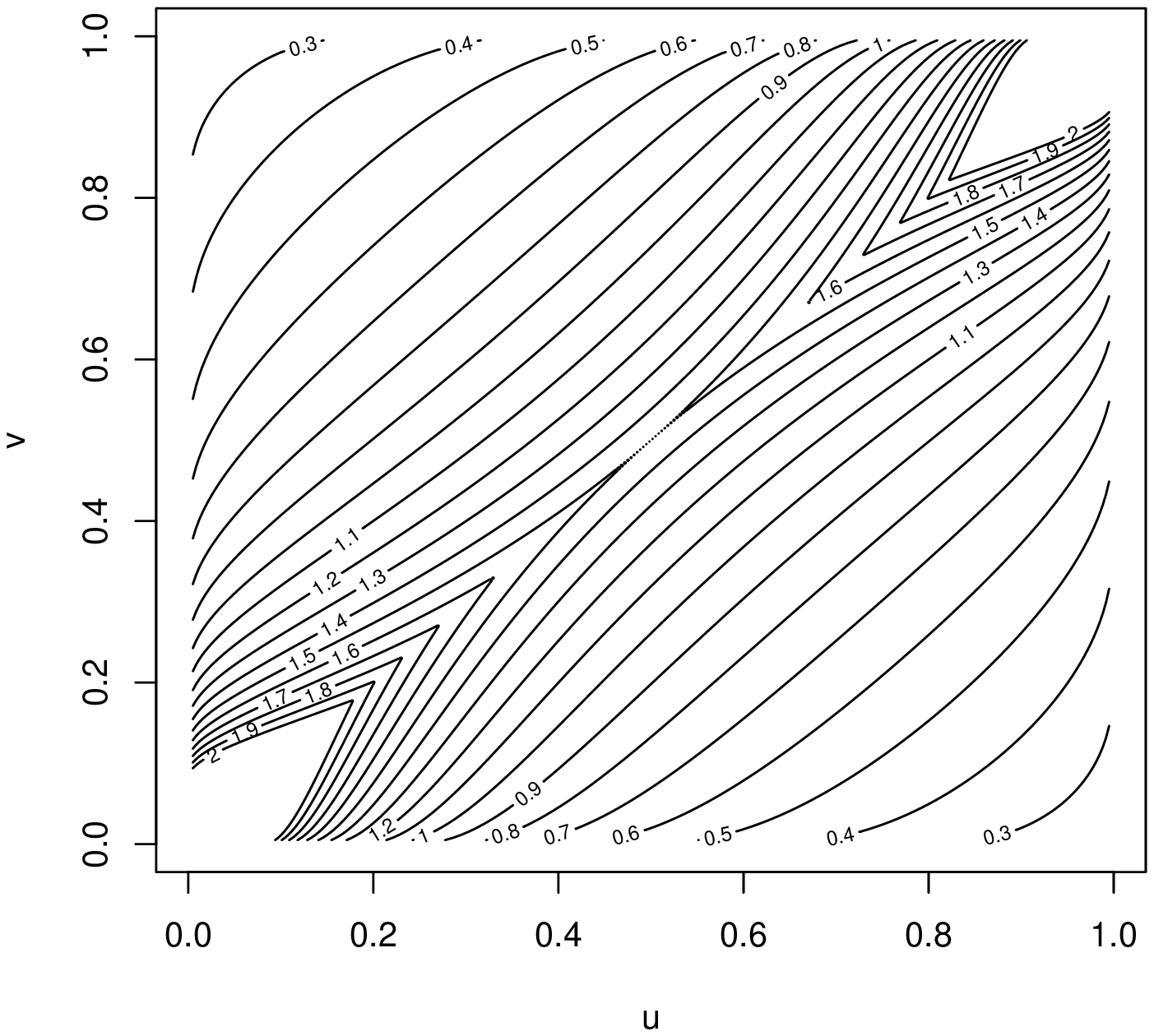}
      \caption{$c_\delta$, $\rho=0.5$}
      \label{normal_max_ent}
   \end{subfigure}
       \begin{subfigure}[b]{.32\linewidth}
      \centering
      \includegraphics[width=50mm]{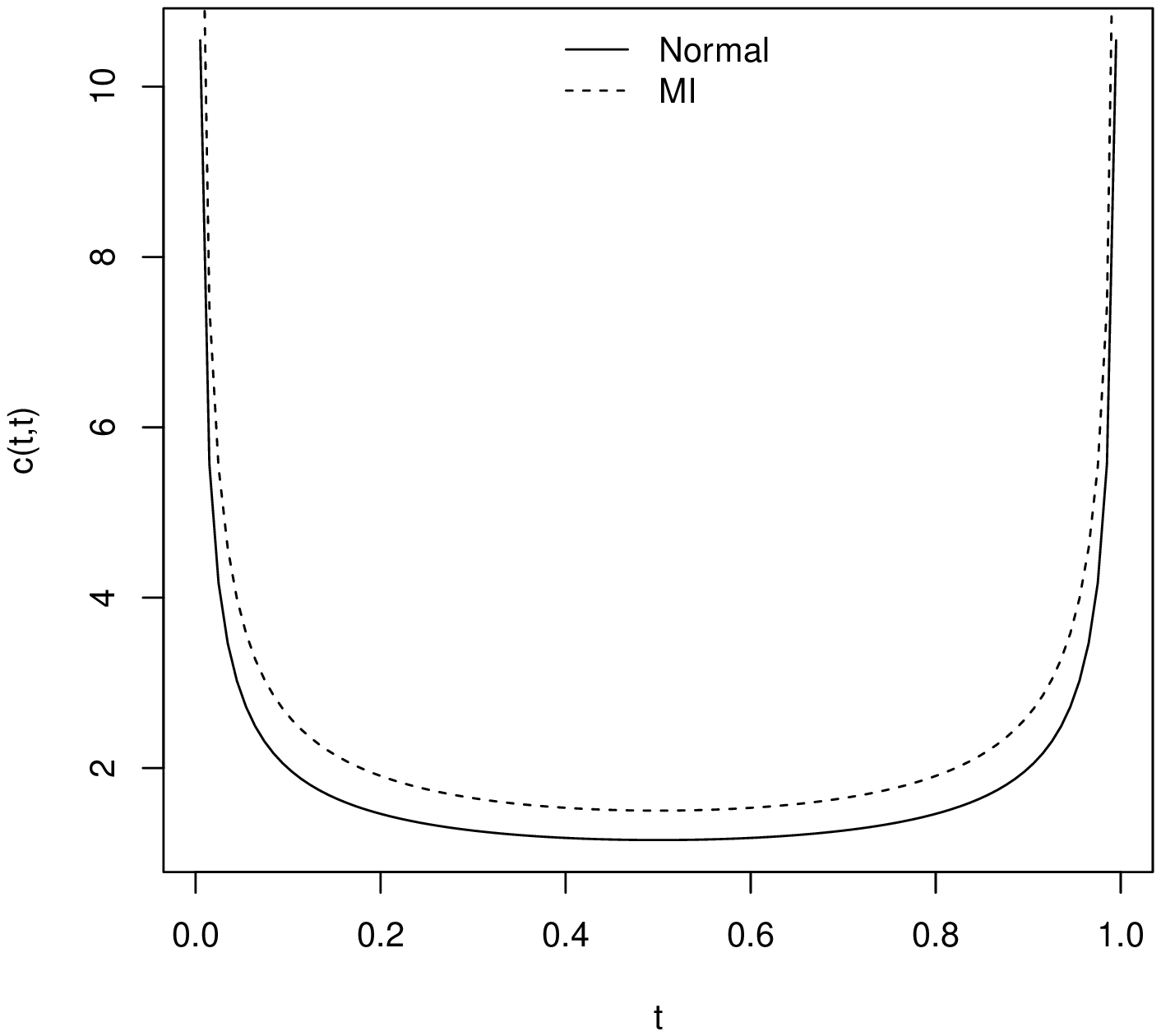}
      \caption{Diagonal cross-section}
      \label{normal_dens_diag}
   \end{subfigure}
   
   \begin{subfigure}[b]{.32\linewidth}
       \centering
       \includegraphics[width=50mm]{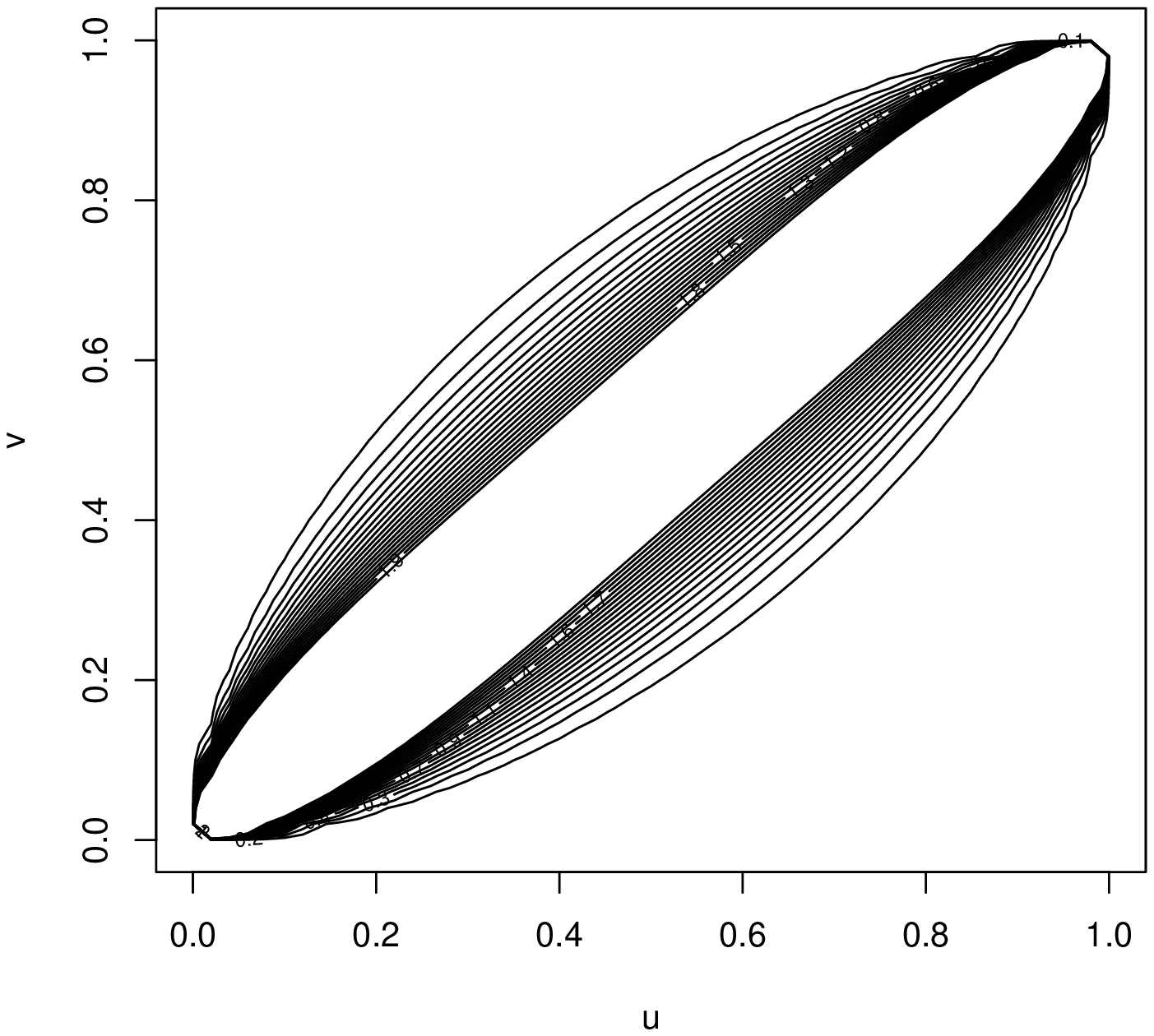}
       \caption{Normal, $\rho=0.95$}
       \label{normal_extr}      
    \end{subfigure}
     \begin{subfigure}[b]{.32\linewidth}
      \centering
      \includegraphics[width=50mm]{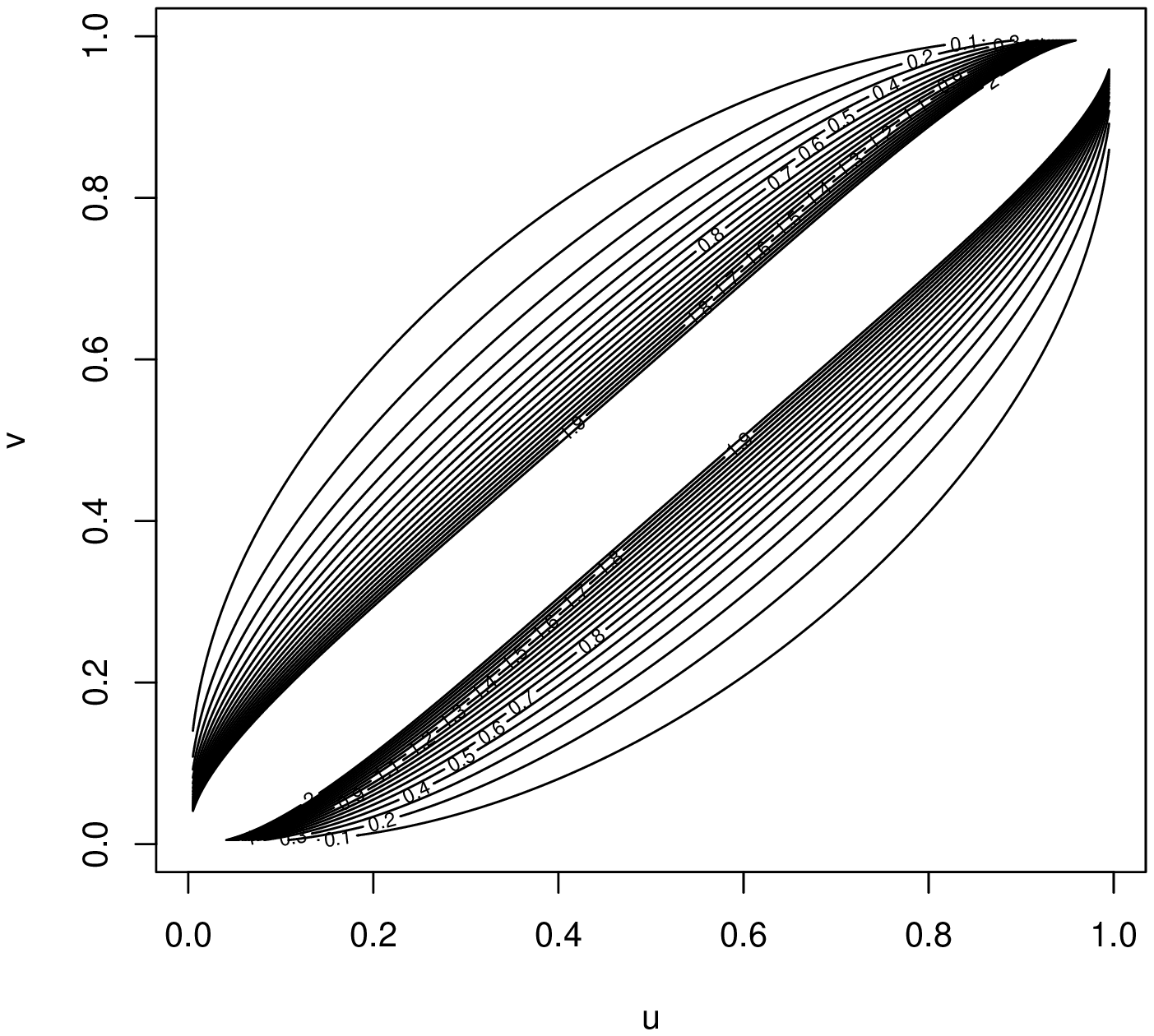}
      \caption{$c_\delta$, $\rho=0.95$}
      \label{normal_extr_max_ent}
   \end{subfigure}
   \begin{subfigure}[b]{.32\linewidth}
      \centering
      \includegraphics[width=50mm]{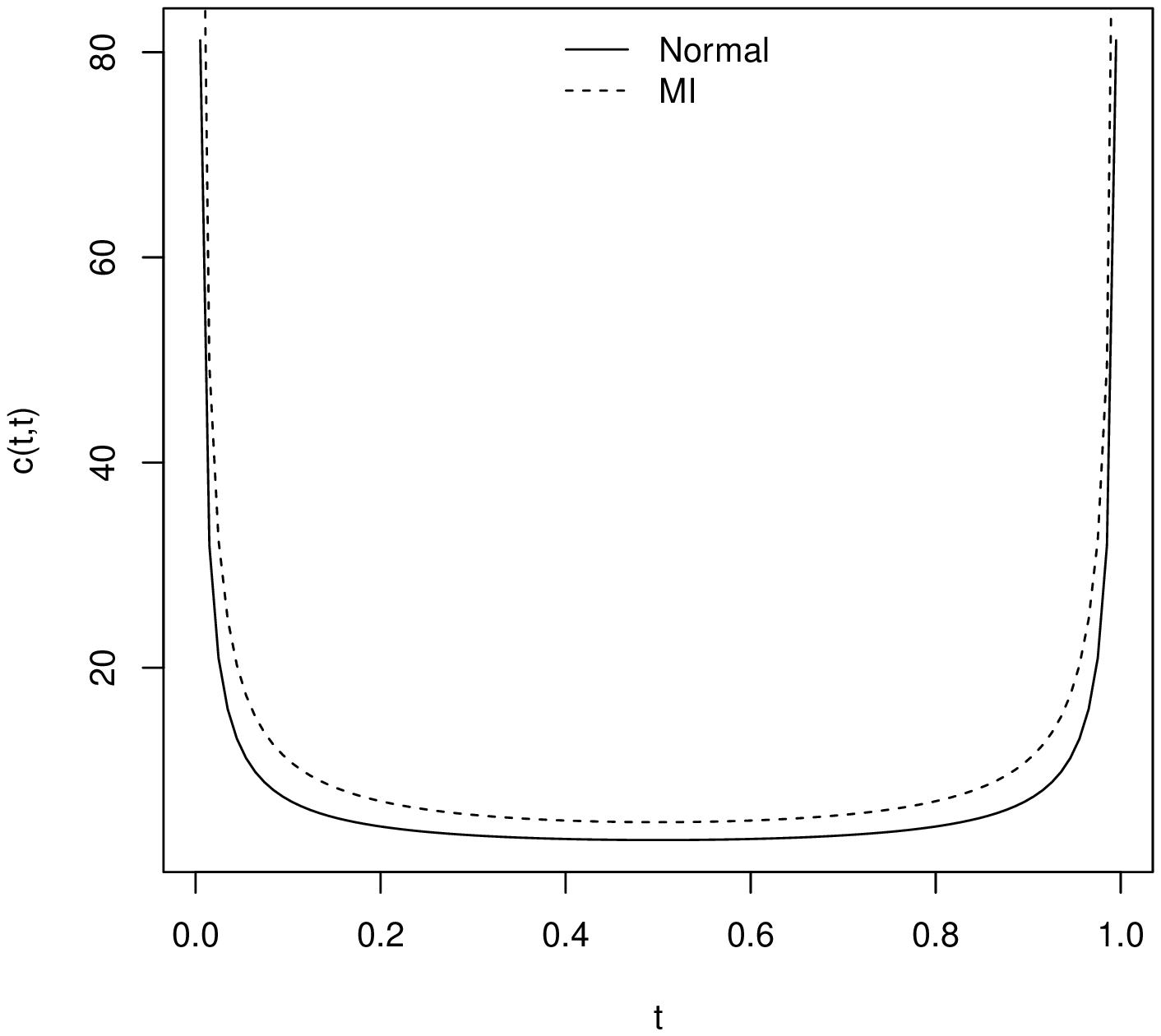}
      \caption{Diagonal cross-section}
      \label{normal_extr_dens_diag}
   \end{subfigure}
      \caption{Isodensity lines and the diagonal cross-section of copulas with diagonal section given by \reff{eq:delta_normal}, with $\rho=0.5$ and
      $\rho=0.95$.}
   \label{normal_iso}  
   \end{figure} 
   
   \begin{figure}[ht]
   
   \centering
   
   \begin{subfigure}[b]{.32\linewidth}
       \centering
       \includegraphics[width=50mm]{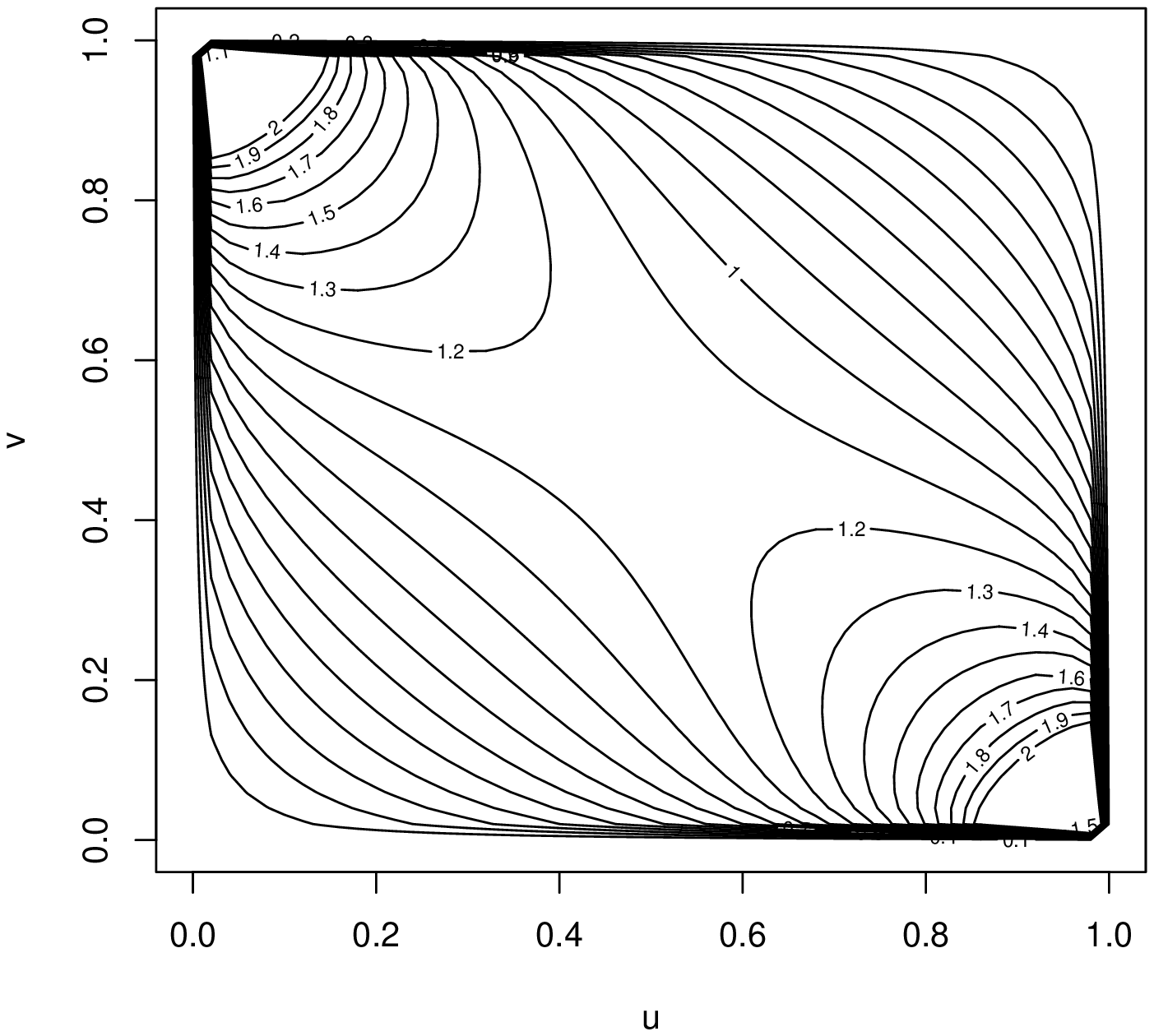}
       \caption{Normal, $\rho=-0.5$}
       \label{normal_neg}      
    \end{subfigure}
     \begin{subfigure}[b]{.32\linewidth}
      \centering
      \includegraphics[width=50mm]{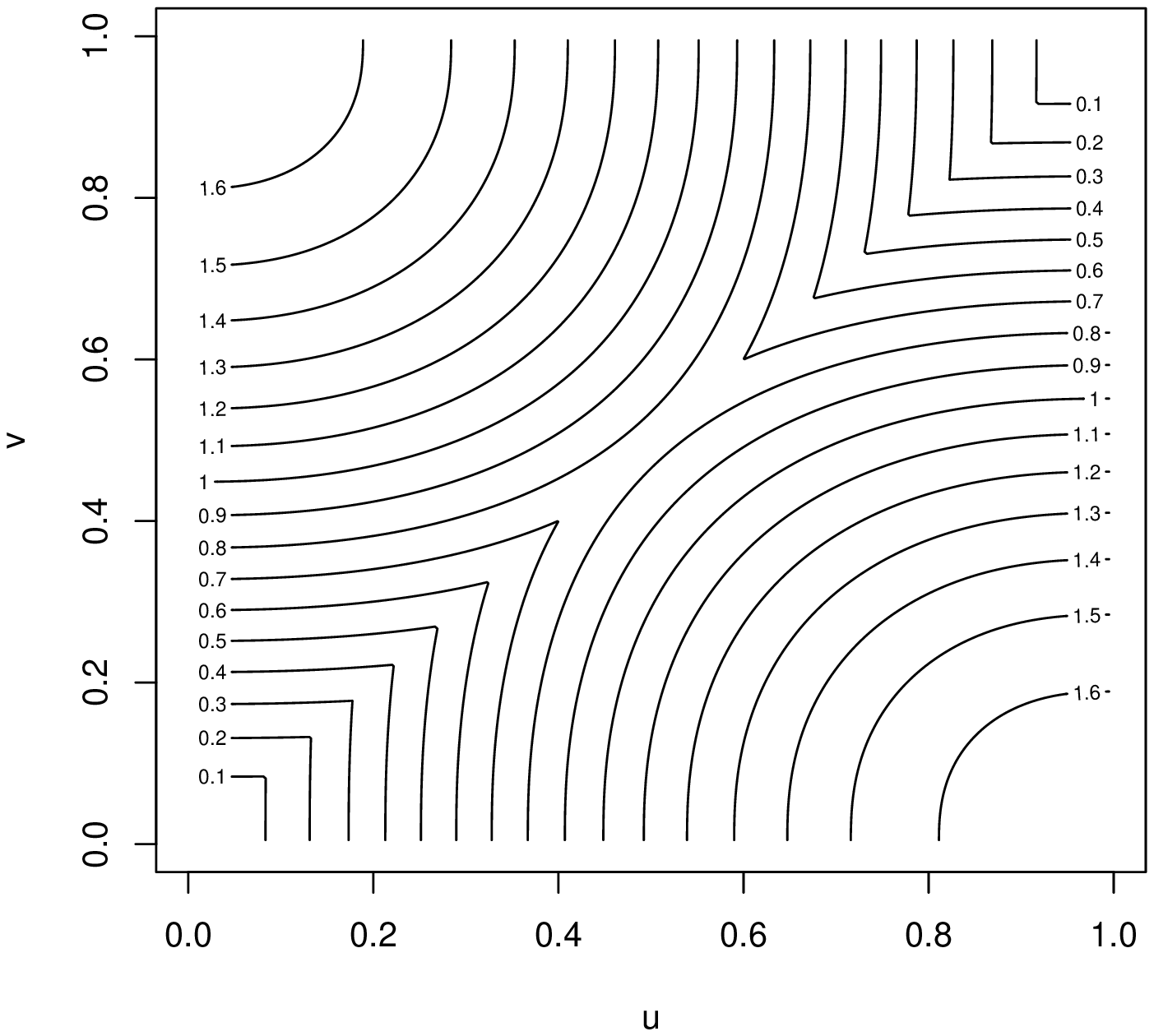}
      \caption{$c_\delta$, $\rho=-0.5$}
      \label{normal_neg_max_ent}
   \end{subfigure}  
    \begin{subfigure}[b]{.32\linewidth}
      \centering
      \includegraphics[width=50mm]{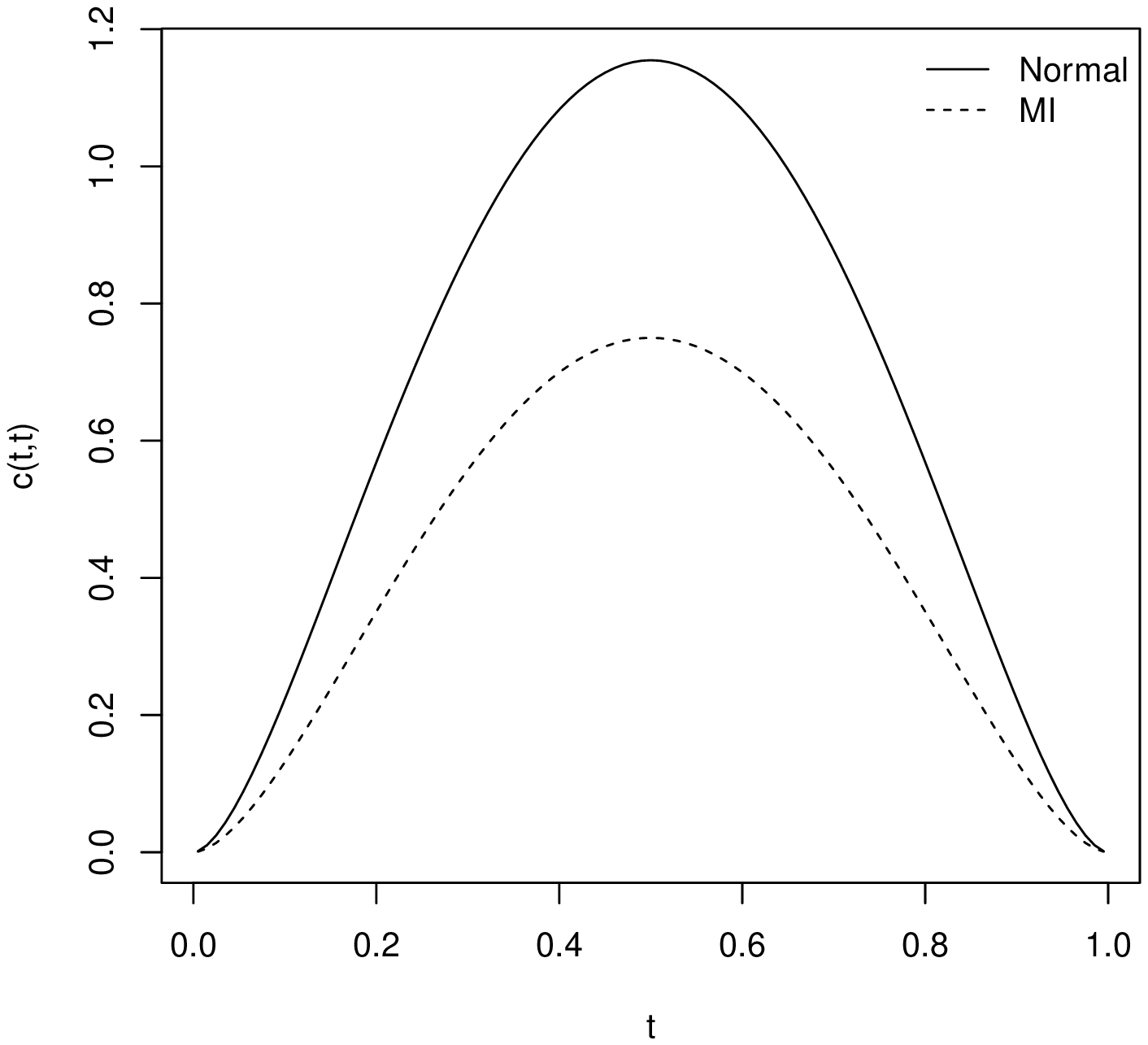}
      \caption{Diagonal cross-section}
      \label{normal_neg_dens_diag}
   \end{subfigure}
   
      \begin{subfigure}[b]{.32\linewidth}
       \centering
       \includegraphics[width=50mm]{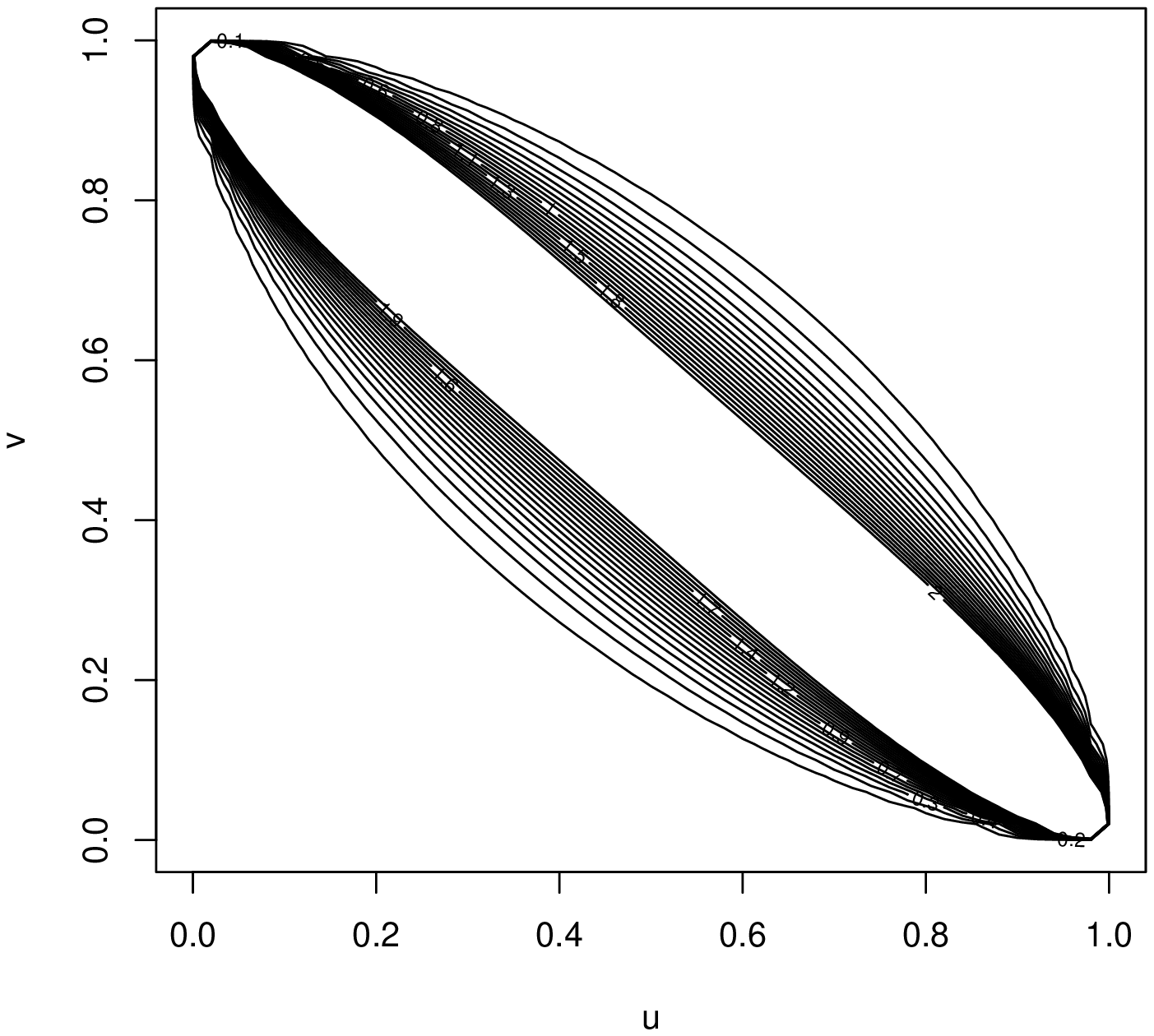}
       \caption{Normal, $\rho=-0.95$}
       \label{normal_extr_neg}      
    \end{subfigure}
     \begin{subfigure}[b]{.32\linewidth}
      \centering
      \includegraphics[width=50mm]{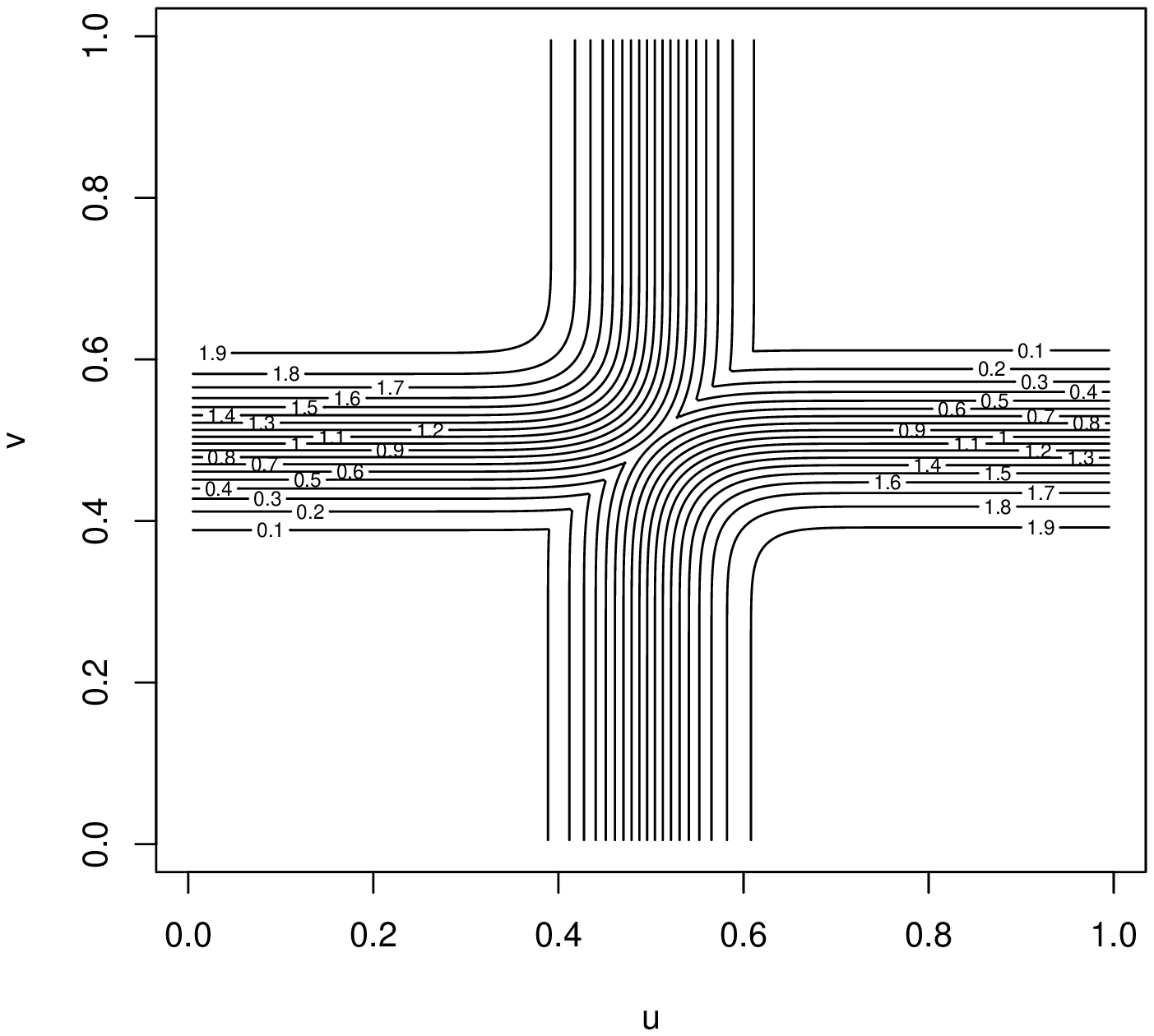}
      \caption{$c_\delta$, $\rho=-0.95$}
      \label{normal_extr_neg_max_ent}
     \end{subfigure}
   \begin{subfigure}[b]{.32\linewidth}
      \centering
      \includegraphics[width=50mm]{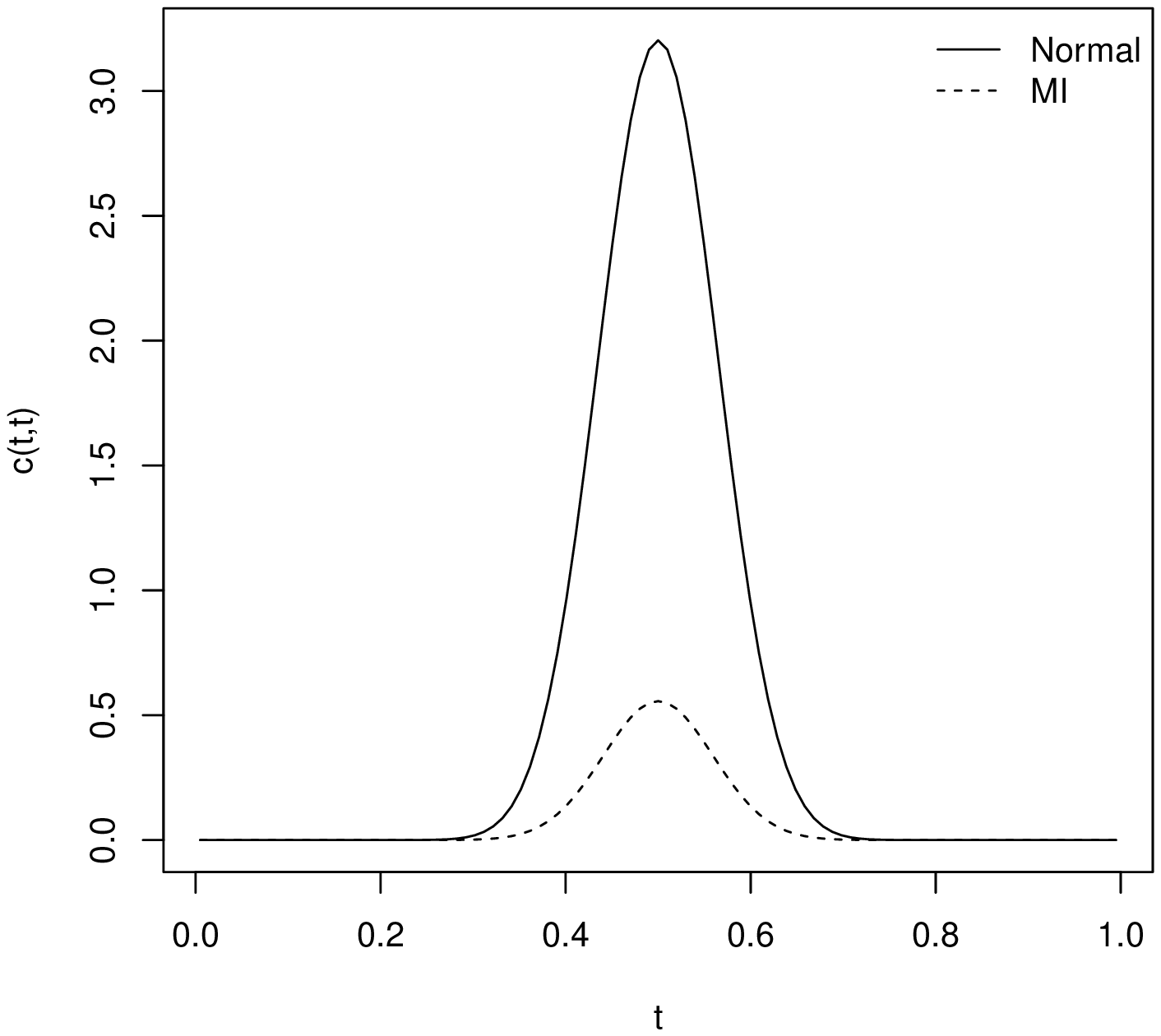}
      \caption{Diagonal cross-section}
      \label{normal_extr_neg_dens_diag}
   \end{subfigure}
   \caption{Isodensity lines and the diagonal cross-section of copulas with diagonal section given by \reff{eq:delta_normal},
    with $\rho=-0.5$ and $\rho=-0.95$}
   \label{normal_iso_neg}  
   \end{figure}

   \begin{figure}[ht]
   
   \centering
   
   \begin{subfigure}[b]{.45\linewidth}
       \centering
       \includegraphics[width=60mm]{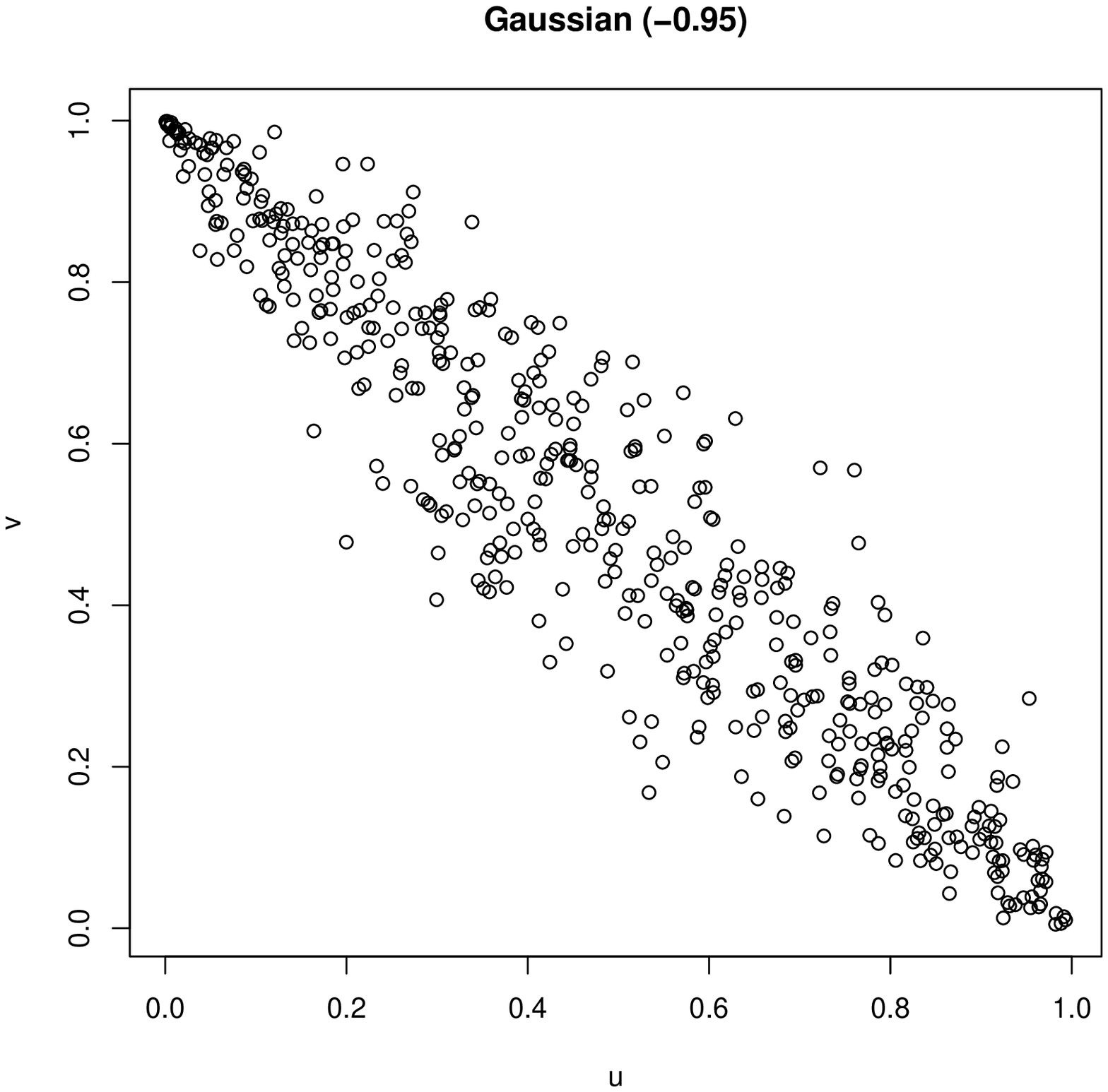}
       \caption{Normal, $\rho=-0.95$}
       \label{Normal_sam}      
    \end{subfigure}
     \begin{subfigure}[b]{.45\linewidth}
      \centering
      \includegraphics[width=60mm]{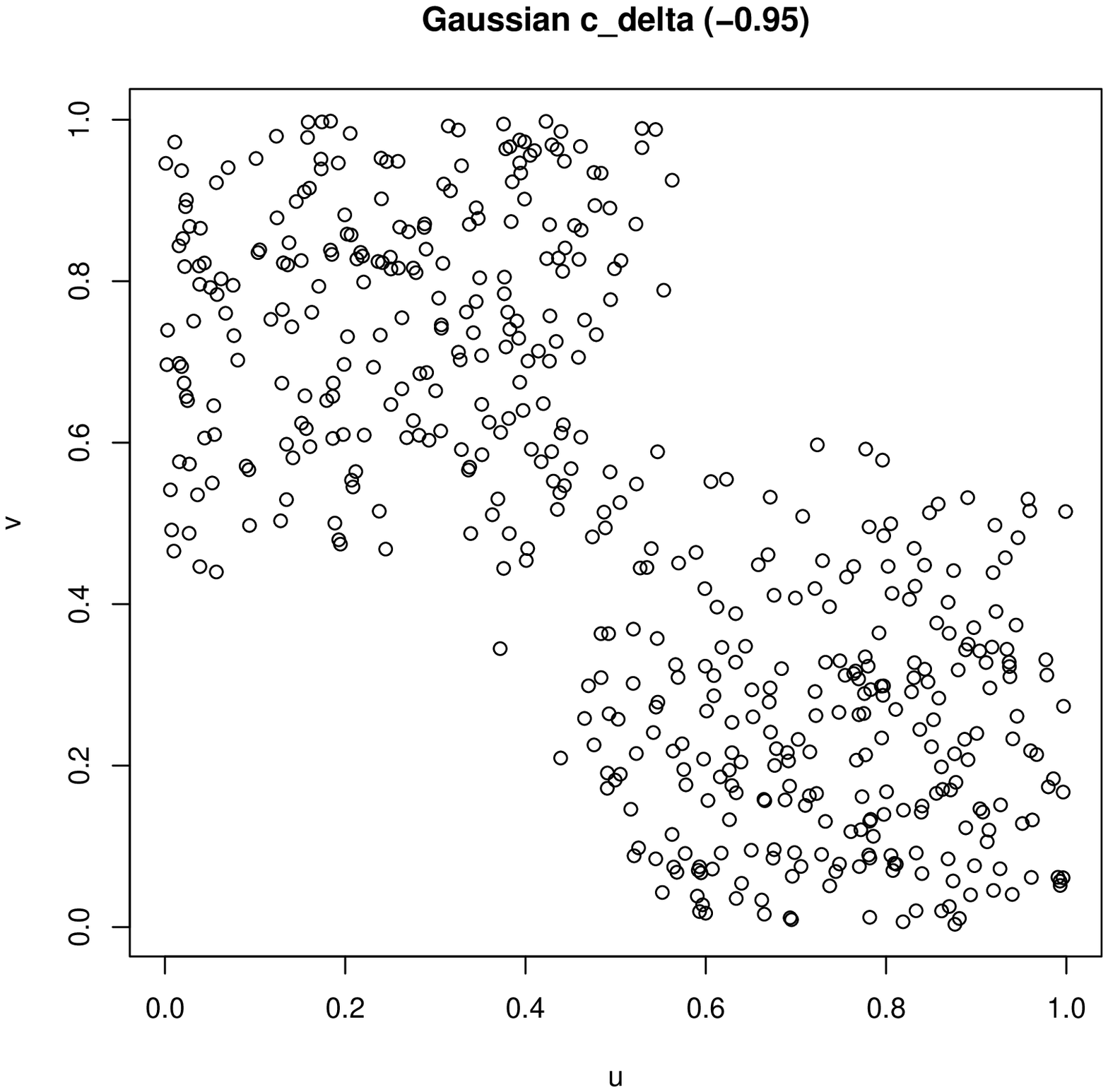}
      \caption{$C_\delta, \rho=-0.95$}
      \label{Normal_max_sam}
   \end{subfigure}
   \caption{Sample of $500$ drawn from the Gaussian copula with $\rho=-0.95$ and from the corresponding $C_\delta$ }
   \label{normal_neg_sample}  
   \end{figure} 

\section{Appendix - Calculation of the entropy of $C_\delta$}
 \label{app:proofKL}
 Let us first introduce some notations. Let $\varepsilon\in (0, 1/2)$.  
Since $x\log(x)\geq  -1/\expp{}$ for $x>0$, we deduce by the monotone
convergence theorem that:
\begin{equation}
   \label{eq:Ice}
\ci(C_\delta)=\lim_{\varepsilon\downarrow 0} \ci_\varepsilon(C_\delta),
\end{equation}
with:
\[
\ci_\varepsilon(C_\delta)=
\int_{[\varepsilon,
  1-\varepsilon]^d} c_\delta(x)\log(c_\delta(x))\, dx.
\]
Using $\delta \leq t$ and that $\delta$ is a non-decresing, $d$-Lipschitz function, we get that
for $t\in I$:
\begin{equation}
\label{eq:h_bounds}
0\leq h(t)\leq \min(t, (d-1)(1-t)) \leq (d-1)\min(t,1-t).
\end{equation}
We set:
\begin{equation}
\label{eq:w_def}
w(t)=a(t)\expp{-F(t)}=\frac{d- \delta'(r)}{d} h^{-1+1/d}(r).
\end{equation}

From the symmetric property of $c_\delta$, we have that

\begin{equation}
   \label{eq:I-Jd}
\ci_\varepsilon(C_\delta)= J_1(\varepsilon)+J_2(\varepsilon)-J_3(\varepsilon),
\end{equation}
 with:
\begin{align*}
   J_1(\varepsilon)
&= d  \int_{[\varepsilon,
  1-\varepsilon]^d} c_\delta(x)  \ind_{\{ \max(x)=x_d \}}
  \left(\sum_{i=1}^{d-1}
  \log\left(w(x_i) \right) 
  \right)\ \, dx,\\
   J_2(\varepsilon)
&= d \int_{[\varepsilon,
  1-\varepsilon]^d} c_\delta(x) \ind_{\{ \max(x)=x_d \}}
  \log\left( \frac{\delta'(x_d)}{d} h^{-1+1/d}(x_d) \right) \ \, dx,\\
   J_3(\varepsilon)
&= d \int_{[\varepsilon,
  1-\varepsilon]^d} c_\delta(x) \ind_{\{ \max(x)=x_d \}}
  \left((d-1)F(x_d) -\sum_{i=1}^{d-1} F(x_i)\right) \, dx.
\end{align*} 
  We introduce
 $A_\varepsilon(r)=\int_\varepsilon^r a(x)  \, dx$. For $J_1(\varepsilon)$, we have:
  \begin{align*}
  J_1(\varepsilon) & = d (d-1) \int_{[\varepsilon,
  1-\varepsilon]} \ind{\{ \max(x)=x_d \}} b(x_d) \prod_{j=1}^{d-1} a(x_j) 
  \log\left( w(x_1) \right)
 \ \, dx\\  
                   & = d (d-1) \int_{[\varepsilon,
  1-\varepsilon]} \left( \int_{[t,1-\varepsilon]}A_\varepsilon^{d-2}(s)b(s) \, ds \right) a(t) 
  \log\left( w(t) \right) \ \, dt.
  \end{align*}
  Notice that using \reff{eq:A-B} and \reff{eq:A^(d-2)}, we have:
  \begin{align*}
   \int_{[t,1-\varepsilon]}A_\varepsilon^{d-2}(s)b(s) \, ds & = \int_{[t,1]} A^{d-2}(s)b(s) \, ds  
            -\int_{[t,1]}  \left(A^{d-2}(s)-A^{d-2}_\varepsilon(s) \right)b(s) \, ds  \\
          & \hspace{1.5cm} -\int_{[1-\varepsilon,1]}A_\varepsilon^{d-2}(s)b(s) \, ds. \\
          & = \frac{h(t)}{(d-1) A(t)} - \int_t^1  \left(A^{d-2}(s)-A^{d-2}_\varepsilon(s) \right)b(s) \, ds \\
          & \hspace{1.5cm} -\int_{[1-\varepsilon,1]}A_\varepsilon^{d-2}(s)b(s) \, ds.
  \end{align*}
  By Fubini's theorem, we get:
  \[
    J_{1}(\varepsilon) = J_{1,1}(\varepsilon) - J_{1,2}(\varepsilon) - J_{1,3}(\varepsilon),
  \]
  with:
  \begin{align*}
  J_{1,1}(\varepsilon) & =  \int_{[\varepsilon,
  1-\varepsilon]}  (d-\delta'(t)) \log\left( w(t) \right) \ \, dt \\
  J_{1,2}(\varepsilon)  & = d (d-1) \left(\int_{[1-\varepsilon,1]}A_\varepsilon^{d-2}(s)b(s) \, ds \right)
                   \int_{[\varepsilon,1-\varepsilon]}  a(t) 
  \log\left( w(t) \right) \ \, dt\\
  J_{1,3}(\varepsilon) & = d (d-1) \int_{[\varepsilon,
  1-\varepsilon]} \left(\int_t^1  \left(A^{d-2}(s)-A^{d-2}_\varepsilon(s) \right) b(s) \, ds \right) a(t) 
  \log\left( w(t) \right) \ \, dt.
  \end{align*}                
To study $J_{1,2}$, we 
first give an upper bound for the term $\int_{[1-\varepsilon,1]}A_\varepsilon^{d-2}(s)a(s)b(s) \, ds$:
\begin{align} \label{eq:A^(d-2)up}
\begin{split}
 \int_{[1-\varepsilon,1]}A_\varepsilon^{d-2}(s)b(s) \, ds & \leq \int_{[1-\varepsilon,1]}A^{d-2}(s)b(s) \, ds \\
          & = \inv{(d-1)}h^{1-1/d}(1-\varepsilon)\expp{-F(1-\varepsilon)} \\
          & \leq (d-1)^{-1/d}\varepsilon^{1-1/d},           
\end{split}
\end{align}
where we used that $A_\varepsilon(s) \leq A(s)$ for $s>\varepsilon$ for the first inequality, \reff{eq:A^(d-2)} for 
the first equality, and \reff{eq:h_bounds} for the last inequality.
Since $ t \log(t) \geq -1/\expp{}$, we have, using \reff{eq:w_def}:
\begin{align*}
 J_{1,2}(\varepsilon) &  \geq -\frac{d (d-1)}{\expp{}}
                         \left(\int_{[1-\varepsilon,1]}A_\varepsilon^{d-2}(s)b(s) \, ds \right) \int_{[\varepsilon,
  1-\varepsilon]}  \expp{F(t)} \ \, dt\\
                      & \geq -\frac{d }{\expp{}}  h^{1-1/d}(1-\varepsilon) \int_{[\varepsilon,
  1-\varepsilon]}\expp{F(t)-F(1-\varepsilon)} \ \, dt\\
                      & \geq -\frac{d}{\expp{}}  ((d-1)\varepsilon)^{1-1/d},\\                      
\end{align*}
 where we used \reff{eq:A^(d-2)} for the second inequality, 
and that $F$ is non-decreasing and \reff{eq:A^(d-2)up} for the third inequality.
On the other hand, we have $t \log(t) \leq t^{\frac{1}{1-1/d}}$, if $t\geq 0$, which gives:
\begin{align*}
 J_{1,2}(\varepsilon) & \leq  d (d-1) \left(\int_{[1-\varepsilon,1]}A_\varepsilon^{d-2}(s)b(s) \, ds \right)
                        \int_{[\varepsilon, 1-\varepsilon]} 
                        \expp{F(t)} \frac{\left(\frac{d-\delta'(t)}{d}\right)^{\frac{1}{1-1/d}}}{h(t)}\ \, dt\\
                      & = d h(1-\varepsilon)^{1-1/d} \int_{[\varepsilon,
  1-\varepsilon]}  \frac{\expp{F(t)-F(1-\varepsilon)}}{h(t)} \ \, dt\\
                      & = d h(1-\varepsilon)^{1-1/d} 
  \left(1-\expp{F(\varepsilon)-F(1-\varepsilon)}\right)\\
                      & \leq d ((d-1)\varepsilon)^{1-1/d},
\end{align*}
where we used \reff{eq:A^(d-2)up} and $t^{\frac{1}{1-1/d}} \leq 1$ for $t \in I$ 
for the first inequality, and that $F$ is non-decreasing for the last. 
This proves that $\lim_{\varepsilon \rightarrow 0} J_{1,2}(\varepsilon) = 0$.
For $J_{1,3}(\varepsilon)$, we first observe that for $s \in [\varepsilon,1-\varepsilon]$ 
we have $A_\varepsilon(s) \leq A(s)$ and thus:
\begin{equation}
  \label{eq:A^(d-2)-A^(d-2)e}
  \left(A^{d-2}(s)-A^{d-2}_\varepsilon(s) \right) = A(\varepsilon)\sum_{i=0}^{d-3}A^i(s)A^{d-3-i}_\varepsilon(s) 
                        \leq (d-2)A(\varepsilon)A^{d-3}(s).                                              
\end{equation}
Using the previous inequality we obtain:
\begin{align*}
 J_{1,3}(\varepsilon) & = d (d-1) \int_{[\varepsilon,
  1-\varepsilon]} \left( \int_t^1  \left(A^{d-2}(s)-A^{d-2}_\varepsilon(s) \right)b(s) \, ds \right) a(t) 
  \log\left( w(t) \right) \ \, dt\\
                      & \geq -\frac{d (d-1)}{\expp{}}\int_{[\varepsilon,
  1-\varepsilon]} \left( \int_t^1 \left(A^{d-2}(s)-A^{d-2}_\varepsilon(s) \right)b(s) \, ds \right) \expp{F(t)}\ \, dt\\
                      & \geq -\frac{d (d-1)(d-2)A(\varepsilon)}{\expp{}}\int_{[\varepsilon,
  1-\varepsilon]} \left( \int_t^1 A^{d-3}(s)b(s) \, ds \right) \expp{F(t)}\ \, dt\\
                      & \geq -\frac{d (d-1)(d-2)A(\varepsilon)}{\expp{}}\int_{[\varepsilon,
  1-\varepsilon]} \frac{\left( \int_t^1 A^{d-2}(s)b(s) \, ds \right)}{A(t)} \expp{F(t)}\ \, dt\\
                      & = -\frac{d (d-2)A(\varepsilon)}{\expp{}}\int_{[\varepsilon,
  1-\varepsilon]} \frac{ h(t)}{A^2(t)} \expp{F(t)}\ \, dt\\  
                      & = -\frac{d (d-2)h^{1/d}(\varepsilon)}{\expp{}}\int_{[\varepsilon,
  1-\varepsilon]} h(t)^{1-2/d} \expp{F(\epsilon)-F(t)}\ \, dt\\  
                      & \geq -\frac{d (d-2) (d-1)^{1-1/d}\varepsilon^{1/d}}{\expp{}},
\end{align*}
where we used $t\log(t) \geq -1/\expp{}$ for the first inequality, \reff{eq:A^(d-2)-A^(d-2)e} for the second, 
\reff{eq:A-B} and \reff{eq:A^(d-2)} in the following equality, and \reff{eq:h_bounds} to conclude.
For an upper bound, we have after noticing that $t \log(t) \leq t^2$ :
\begin{align*}
 J_{1,3}(\varepsilon) & = d (d-1) \int_{[\varepsilon,
  1-\varepsilon]} \left( \int_t^1  \left(A^{d-2}(s)-A^{d-2}_\varepsilon(s) \right)b(s) \, ds \right) a(t) 
  \log\left( w(t) \right) \ \, dt\\
                      & \leq d (d-1) \int_{[\varepsilon,
  1-\varepsilon]} \left( \int_t^1  \left(A^{d-2}(s)-A^{d-2}_\varepsilon(s) \right)b(s) \, ds \right) 
  \expp{F(t)}w^2(t)\ \, dt\\
                      & \leq d (d-1) (d-2) A(\varepsilon) \int_{[\varepsilon,
  1-\varepsilon]} \frac{\left( \int_t^1  A^{d-2}(s)b(s) \, ds \right)}{A(t)}
  \expp{F(t)}h^{-2+2/d}(t)\ \, dt\\
                      & = d (d-2) A(\varepsilon) \int_{[\varepsilon,
  1-\varepsilon]}\frac{\expp{-F(t)}}{h(t)}\ \, dt\\
                      & = d (d-2) h^{1/d}(\varepsilon) (1-\expp{F(\varepsilon)-F(1-\varepsilon)}) \\
                      & \leq d (d-2)(d-1)^{1/d} \varepsilon^{1/d},
\end{align*}
where we used \reff{eq:A^(d-2)-A^(d-2)e} and $0 \leq (d-\delta'(t))/d \leq 1$  for the second inequality; 
\reff{eq:A-B} and \reff{eq:A^(d-2)} in the second equality; and \reff{eq:h_bounds} to conclude. 
The results on the two bounds show that $\lim_{\varepsilon \rightarrow 0} J_{1,3}(\varepsilon)=0$. 
Similarly, for $J_2(\varepsilon)$, we get:
\begin{align*}
  J_2(\varepsilon) & = \int_{[\varepsilon,1-\varepsilon]^d}\ind{\{ \max(x)=x_d \}}b(x_d)\prod_{j=1}^{d-1} a(x_j)  
  \log\left( \frac{\delta'(x_d)}{d} h^{-1+1/d}(x_d)  \right) \ \, dx\\  
                   & = d \int_{[\varepsilon,
  1-\varepsilon]} A_\varepsilon^{d-1}(t)b(t)
  \log\left( \frac{\delta'(t)}{d} h^{-1+1/d} (t)  \right) \ \, dt \\
                & = d\int_{[\varepsilon, 1-\varepsilon]} A^{d-1}(t)b(t) 
  \log\left( \frac{\delta'(t)}{d} h^{-1+1/d}(t)   \right) \ \, dt \\
                & \hspace{1.5cm}- d \int_{[\varepsilon, 1-\varepsilon]} \left(A^{d-1}(t)-A^{d-1}_\varepsilon(t)\right)b(t) 
  \log\left( \frac{\delta'(t)}{d} h^{-1+1/d} (t)  \right) \ \, dt \\
                & = J_{2,1}(\varepsilon)-J_{2,2}(\varepsilon)
 \end{align*}
 with $J_{2,1}(\varepsilon)$ and $J_{2,2}(\varepsilon)$ given by, using \reff{eq:A^(d-1)}:
 
 \begin{align*}
  J_{2,1}(\varepsilon) & = d\int_{[\varepsilon, 1-\varepsilon]} A^{d-1}(t)b(t) 
  \log\left( \frac{\delta'(t)}{d} h^{-1+1/d}(t)   \right) \ \, dt \\
  J_{2,2}(\varepsilon) & = d \int_{[\varepsilon, 1-\varepsilon]} \left(A^{d-1}(t)-A^{d-1}_\varepsilon(t)\right)b(t) 
  \log\left( \frac{\delta'(t)}{d} h^{-1+1/d} (t) \right) \ \, dt.
 \end{align*}
 By \reff{eq:A^(d-1)}, we have:
 \begin{equation}
  J_{2,1}(\varepsilon)=
  \int_{[\varepsilon, 1-\varepsilon]} \delta'(t)
  \log\left( \frac{\delta'(t)}{d} h^{-1+1/d}(t)   \right) \ \, dt.
 \end{equation}

 Similarly to $J_{1,3}(\varepsilon)$ we can show that $\lim_{\varepsilon \rightarrow 0} J_{2,2}(\varepsilon) = 0$.

Adding up $J_1(\varepsilon)$ and $J_2(\varepsilon)$ gives

\begin{align*}
 J_1(\varepsilon)+J_2(\varepsilon) = \cj_{\varepsilon}(\delta) + J_4(\varepsilon) 
 -d \log(d) (1-2\varepsilon) - J_{1,2}(\varepsilon)-J_{1,3}(\varepsilon)-J_{2,2}(\varepsilon)
\end{align*}
with
\[
\cj_\varepsilon(\delta)= (d-1) 
\int_{\varepsilon}^{1-\varepsilon} \val{\log
\left(h(t)\right) } \, dt ,
\]
\[
J_4(\varepsilon)=\int_{\varepsilon}^{1-\varepsilon} \left(d- \delta'(t)\right) \log \left(
  d -\delta'(t)\right)  dt 
+\int_{\varepsilon}^{1-\varepsilon} \delta'(t) \log \left(\delta'(t)\right)  dt.
\]
Notice that $\cj_\varepsilon(\delta)$ is non-decreasing in
$\varepsilon>0$ and that:
\[
\cj(\delta)=\lim_{\varepsilon\rightarrow 0}  \cj_\varepsilon(\delta).
\]
Since $\delta'(t)\in [0,d]$, we deduce that $(d-\delta')\log(d-\delta')$
and $\delta'\log(\delta')$ are bounded on $I$ from above by $d\log(d)$ and from
below by $-1/\expp{}$ and therefore integrable on $I$. This implies :
\[
\lim_{\varepsilon \rightarrow 0} J_4(\varepsilon)=
 \ci_1(\delta')+\ci_1(d-\delta').
\]

As for $J_3(\varepsilon)$, we have by integration by parts:

\begin{align*}
 J_3(\varepsilon)  & = d \int_{[\varepsilon,
  1-\varepsilon]^d} \ind_{\{ \max(x)=x_d \}} b(x_d) \prod_{i=i}^{d-1} a(x_i)  
                    \left((d-1)F(x_d) -\sum_{i=1}^{d-1} F(x_i)\right) \, dx \\
                  & = d (d-1) \int_{[\varepsilon,
  1-\varepsilon]} A_\varepsilon^{d-1}(t)b(t)F(t) \, dt \\
                  & \hspace{1.5cm} - d  (d-1) \int_{[\varepsilon,
  1-\varepsilon]}   A_\varepsilon^{d-2}(t)b(t) \left(\int_\varepsilon^t a(s)  F(s) \right) \, dt \\
                  & = d (d-1) \int_{[\varepsilon,
  1-\varepsilon]} A_\varepsilon^{d-1}(t)b(t)F(t) \, dt \\
                  & \hspace{1.5cm} - d  (d-1) \int_{[\varepsilon,
  1-\varepsilon]}   A_\varepsilon^{d-2}(t)b(t) \left(A_\varepsilon(t)F(t) - \frac{d-1}{d} 
                  \int_\varepsilon^t \frac{A_\varepsilon(s)}{h(s)} \, ds \right) \, dt \\
                  & = (d-1)^2\int_{[\varepsilon,1-\varepsilon]} \left( \int_t^{1-\varepsilon} A_\varepsilon^{d-2}(s)b(s)
                  \right) \frac{A_\varepsilon(t)}{h(t)} \, dt.  \\
\end{align*}
By the monotone convergence theorem, \reff{eq:A-B} and \reff{eq:A^(d-2)} we have:
\begin{align*}
 \lim_{\varepsilon \rightarrow 0} J_3(\varepsilon) &= (d-1)^2\int_{I} \left( \int_t^1 A^{d-2}(s)b(s)
                  \right) \frac{A(t)}{t-\delta(t)} \, dt  \\
                                                   & = d-1.
\end{align*}

Summing up all the terms and taking the limit $\varepsilon=0$ give :
\begin{align*}
 \ci(C_\delta) & = (d-1)\int_I \val{\log(t-\delta(t))} \, dt + \ci_1(\delta') + \ci_1(d-\delta') - d\log(d) - (d-1) \\
               & = (d-1)\cj(\delta) + \cg(\delta).
\end{align*}

 \bibliographystyle{abbrv}
\bibliography{Max_entr_symm_biblio}

\end{document}